\documentclass[reqno]{amsart}

\usepackage{amsmath,amssymb,graphicx}
\usepackage{verbatim} 

\usepackage[latin1]{inputenc}
\usepackage{enumerate}
\usepackage{hyperref}
\usepackage{stackrel}



\newtheorem{thm}{Theorem}[section]
\newtheorem{prop}[thm]{Proposition}

\newtheorem{lem}[thm]{Lemma}

\newtheorem{cor}[thm]{Corollary}

\newtheorem{que}[thm]{Question}
\theoremstyle{definition}
\newtheorem{obs}[thm]{Observation}

\newtheorem{rem}[thm]{Remark}
\newtheorem{exa}[thm]{Example}
\newtheorem{defn}[thm]{Definition}

\newtheorem{alg}[thm]{Algorithm}


\newcommand{\sumlim}{\sum\limits}

\newcommand{\dep}{{\rm dp}}
\newcommand{\Neg}{{\rm Neg}}
\newcommand{\Inv}{{\rm Inv}}
\newcommand{\nneg}{{\rm neg}}
\newcommand{\inv}{{\rm inv}}
\newcommand{\sgn}{{\rm sign}}
\newcommand{\nsp}{{\rm nsp}}
\newcommand{\Nsp}{{\rm Nsp}}
\begin{document}

\title[Depth in classical Coxeter groups]{Depth in classical Coxeter groups}

\author{Eli Bagno}
\email{bagnoe@g.jct.ac.il}
\address{Jerusalem College of Technology, 21 Havaad Haleumi st. Jerusalem, Israel.}
\author{Riccardo Biagioli}
\email{biagioli@math.univ-lyon1.fr}
\address{Institut Camille Jordan, Universit\'e Claude Bernard Lyon 1, 69622 Villeurbanne Cedex, France.}
\author{Mordechai Novick}
\email{mnovick@g.jct.ac.il}
\address{Jerusalem College of Technology, 21 Havaad Haleumi st. Jerusalem, Israel.}
\author{Alexander Woo}
\thanks{AW is partially supported by NSA Young Investigators Grant H98230-13-1-0242}
\email{awoo@uidaho.edu}
\address{Dept. of Mathematics, University of Idaho, 875 Perimeter Dr. MS 1103, Moscow, ID 83844-1103, USA.}


\keywords{Coxeter groups, Bruhat graph, reflections, depths, length.}


\begin{abstract}
The depth statistic was defined by Petersen and Tenner for an element of an arbitrary Coxeter group in terms of factorizations of the element into a product of reflections. It can also be defined as the minimal cost, given certain prescribed edge weights, for a path in the Bruhat graph from the identity to an element. We present algorithms for calculating the depth of an element of a classical Coxeter group that yield simple formulas for this statistic. We use our algorithms to characterize elements having depth equal to length. These are the short-braid-avoiding elements. We also give a characterization of the elements for which the reflection length coincides with both the depth and the length. These are the boolean elements.
\end{abstract}


\maketitle

\section{Introduction}
\label{sec:intro}

Let $(W,S)$ be a Coxeter system.  Two statistics on elements of $W$ are classically associated to any Coxeter system.  First, each element $w\in W$ can be written as a product $w=s_1\cdots s_r$ of simple generators $s_i\in S$.  The {\bf length} $\ell_S(w)$ of $w$ is the minimal number of simple generators $s\in S$ needed to express $w$, so
\begin{equation}\label{length}
\ell_S(w):=\mbox{min} \{r \in {\mathbb N} \mid w=s_{1}\cdots s_{r}\; \;
\mbox{for some} \; \; s_{1},\ldots,s_{r} \in S \}.
\end{equation}
If $r=\ell_S(w)$, then any word $w=s_1\cdots s_r$ is called a {\bf reduced expression} for $w$.

The second statistic is known as reflection length.  Let
\begin{equation}\label{reflections}
T:=\{wsw^{-1} \mid s \in S, \; w \in W \}.
\end{equation}
This is known as the set of {\bf reflections} of $W$.  The {\bf reflection length} $\ell_T(w)$ is the minimal number of reflections $t\in T$ needed to express $w$, so
\begin{equation}\label{reflength}
\ell_T(w):=\mbox{min} \{r \in {\mathbb N} \mid w=t_{1}\cdots t_{r}\; \;
\mbox{for some} \; \; t_{1},\ldots,t_{r} \in T \}.
\end{equation}

Let $\Phi=\Phi^+ \cup \Phi^{-}$ be the root system for $(W,S)$, with $\Pi \subset \Phi$ the simple roots. The depth $\dep(\beta)$ of a positive root $\beta \in \Phi^+$ is defined as  
$$\dep(\beta):={\rm min}\{r \mid s_{1} \cdots s_{r} (\beta) \in  \Phi^-, s_j \in S\}.$$
It is easy to see that $\dep(\beta)=1$ if and only if $\beta \in \Pi$. As a function on the set of roots, depth is also the rank function for the root poset of a Coxeter group, as developed in \cite[\S 4]{BBbook}.

Now, if we denote by $t_{\beta}$ the reflection corresponding to the root $\beta$, Petersen and Tenner introduced~\cite{PT} a new statistic, also called {\bf depth} and denoted $\dep(w)$ for any $w \in W$, by defining
$$\dep(w):={\rm min} \left\{\sum_{i=1}^r{\dep(\beta_i)} \mid w=t_{\beta_1} \cdots t_{\beta_r}, \ t_{\beta_i} \in T\right\}.$$

Petersen and Tenner further observe that depth always lies between length and reflection length.  For each positive root $\beta$, one has
\begin{equation}\label{eq:depthreflection}
\dep(t_{\beta})=\dep(\beta)=\frac{\ell_S(t_\beta)+1}{2}.
\end{equation}
Hence, by definition,
$$\ell_T(w)\leq \frac{\ell_T(w)+\ell_S(w)}{2} \leq \dep(w) \leq \ell_S(w).$$

Petersen and Tenner focus mainly on the case where $(W,S)$ is the symmetric group (with $S$ being the adjacent transpositions).
In particular, they provide the following:
\begin{itemize}
\item A formula for depth in terms of the sizes of exceedances.  To be specific, they show
$$\dep(w)=\sumlim_{w(i)>i} (w(i)-i).$$  
\item The maximum depth for an element in $S_n$ (for each fixed $n$) and a characterization of the permutations that achieve this depth. (Both were also previously found by Diaconis and Graham \cite{DiGr} starting from the above formula, which they called the total displacement of a permutation.)
\item An algorithm that, given $w\in W$, finds an expression $w=t_1\cdots t_r$ that realizes the depth of $w$.
\item Characterizations both of the permutations $w$ such that $\dep(w)=\ell_S(w)$ and of the permutations such that $\dep(w)=\ell_T(w)$.
\end{itemize}

In this paper, we provide analogous results for the other infinite families of finite Coxeter groups, namely the group $B_n$ of signed permutations and its subgroup $D_n$ of even signed permutations.  (The dihedral groups were also treated in~\cite{PT}.)  Definitions and our conventions for working with these groups follow in Section 2.

In each case, we give a formula for depth that, like those in~\cite{PT}, is in terms of sizes of exceedances, except we need to introduce a small adjustment factor that can be explicitly calculated from the interaction of the signs and the sum decomposition of the underlying unsigned permutation.  Using our formulas, we can find the maximum depth for any element in $B_n$ or $D_n$ for a given $n$ and describe the signed permutations that achieve this maximum.

Furthermore, we give algorithms that, given an element $w$, produce reflections $t_1,\ldots, t_r$ such that $w=t_1\cdots t_r$ and $\dep(w)=\sum_{i=1}^r \dep(t_r)$.  Our algorithms differ from that of Petersen and Tenner in several respects that are worth mentioning.  First, their algorithm produces an expression with $r=\ell_T(w)$ reflections.  This turns out to be impossible in types B and D; indeed there are elements $w$ in both $B_n$ and $D_n$ where no factorization of $w$ into $\ell_T(w)$ reflections realizes the depth.  Second, both their algorithm and ours always produce an expression that is realized by a strictly increasing path in Bruhat order.  To be precise, if we let $w_i=t_1\cdots t_i$ for all $i$ with $1\leq i\leq r$, then $\ell_S(w_i)\geq \ell_S(w_{i-1})$ for all $i$, $2\leq i\leq r$.  However, our algorithms have a stronger property; we always produce a factorization for which $\ell_S(w_i)=\ell_S(w_{i-1})+\ell_S(t_i)$, and hence $\ell_S(w)=\sum_{i=1}^r \ell_S(t_i)$.  In other words, our factorization of $w$ into reflections is reduced.  Finally, the algorithm of Petersen and Tenner relies on both left and right multiplication.  Given a permutation $w$, their algorithm produces either the reflection $t_1$ or the reflection $t_r$, and then their algorithm recursively finds a decomposition for either $t_1w$ or $wt_r$ respectively.  Our algorithms use only right multiplication.  Given $w$, we always produce the reflection $t_r$ and then recursively apply our algorithm to $wt_r$.
In particular, this implies that the depth can be realized by a strictly increasing path in the {\bf right weak order}, which, as we recall, is the partial order on $W$ that is the transitive closure of the relation where, for any $w\in W$ and $s\in S$, $w <_R ws$ if $\ell_S(w)+1=\ell_S(ws)$.  (See~\cite[\S 3.1]{BBbook} for further details.)

Using the property that depth is always realized by a reduced factorization into reflections, it is easy to see that the elements $w$ with $\ell_S(w)=\dep(w)$ are precisely the short-braid-avoiding elements introduced by Fan~\cite{Fan}.  In type B, these are precisely the top-and-bottom fully commutative elements of Stembridge~\cite{St2}, confirming the conjecture of Petersen and Tenner.  The elements $w$ with $\ell_T(w)=\dep(w)$ are the boolean elements of Tenner~\cite{Tenner1}.

The remainder of our paper is organized as follows.  Section \ref{sec:def} gives our conventions for type B and type D and states our main theorems.  Section \ref{sec:typeBalg} gives our algorithm and the proof of our formula in type B, and Section \ref{sec:typeDalg} does the same for type D.  Section \ref{sec:coincidences} discusses the coincidences of depth, length, and reflection length. Section \ref{sec:open} lists a series of open problems and  various considerations stemming from the realization of depth by reduced factorizations.
\smallskip

We thank Kyle Petersen for useful discussions, including letting us know that we were unnecessarily duplicating efforts by independently working on these questions.

\section{Definitions, Conventions, and Main Results}\label{sec:def}

Unless otherwise stated, the conventions we use for working with $B_n$ and $D_n$ are those of~\cite[\S 8.1--8.2]{BBbook}, where more details and proofs of statements can be found. For $n \in {\mathbb N}$ we set $[n]:=\{1,\ldots,n\}$, and for $m,n \in \mathbb{Z}$ we set $[m,n]:=\{m,m+1,\ldots, n\}$.

\subsection{Type B}
We define $B_n$ to be the group of {\bf signed permutations} on $[n]$. The is the group of all bijections $w$ of the set $[-n,n]\setminus\{0\}$ onto itself such that $w(-i)=-w(i)$ for all $i\in[-n,n]\setminus\{0\}$, with composition as the group operation.  Since an element $w\in B_n$ is entirely determined by $w(i)$ for $i>0$, we write $w=[w(1),\ldots,w(n)]$ and call this the {\bf window notation} for $w$.  For convenience, we will usually write our negative signs above a number rather than to its left when using window notation.  We denote by $\sgn(w(i))$ and $|w(i)|$ the {\bf sign} and {\bf absolute value} of the entry $w(i)$.  We let
\begin{eqnarray*}
s_0^B&:=&[\overline{1},2,\ldots,n]=(-1,1), \\
s_i&:=& [1,\ldots,i-1,i+1,i,i+2,\ldots,n]=(i,i+1)(-i,-i-1) \quad {\rm for} \ i \in [n-1],
\end{eqnarray*}
and $S_B:=\{s_0^B,s_1,\ldots,s_{n-1}\}$.  Then $(B_n,S_B)$ forms a Coxeter system.
\smallskip

To state our formula for depth, we need the notion of an indecomposable element and some associated definitions.  These are standard definitions for permutations, but as far as we are aware, they have not been previously extended to signed permutations.

\begin{defn}\rm
Let $u \in B_k, v \in B_{n-k}$. Define the {\bf direct sum} of $u$ and $v$ by:
$$(u \oplus v)(i) :=\left\{ \begin{array}{lll} u(i) & i \in \{1,\ldots, k\};  \\
\sgn(v(i-k)) (|v(i-k)|+k) & i \in \{k+1,\ldots, k+l\}.
\end{array}    \right.$$
\end{defn}

A signed permutation $w \in B_n$ is {\bf decomposable} if it can be expressed as a nontrivial (meaning $1\leq k\leq n-1$) direct sum of signed permutations and {\bf indecomposable} otherwise.  Every signed permutation has a unique expression as the direct sum of indecomposable signed permutations $w=w^1 \oplus \cdots \oplus w^k$.  This expression is called the {\bf type B decomposition} of $w$. The indecomposable pieces are called the {\bf type B blocks} (or simply {\bf blocks}).

\begin{defn}
Given a signed permutation $w=w^1 \oplus \cdots \oplus w^k \in B_n$, we define the {\bf B-oddness} of $w$, denoted by $o^B(w)$, as the number of blocks in the sum decomposition with an odd number of negative entries. 
\end{defn}

For example, if we let $w=[4, \bar{3}, 1, \bar{2}, 7, 5, \bar{6}, 9, \bar{8}]$, then $w$ is decomposable with $w=w^1 \oplus  w^2 \oplus w^3$, where the blocks are
$w^1=[4, \bar{3}, 1, \bar{2}]$, $w^2=[3, 1, \bar{2}]$, and $w^3=[2,\bar{1}]$; moreover $o^B(w)=2$.  On the other hand, $w'=[\bar{8}, 1, 9, 3, 5, 2, \bar{6}, 4, 7]$ is indecomposable with $o^B(w')=0$. The negative identity $[\bar1,\ldots,\bar n]$ is the oddest element in $B_n$, with oddness $n$.

Given a signed permutation $w$, let 
$$ \Neg(w):=\{i\in[n]\mid w(i)<0\} \quad {\rm and} \quad \nneg(w):=\#\Neg(w).$$  
Now we can present an explicit formula for the depth of a permutation in $B_n$ in terms of window notation.

\begin{thm}\label{thm:depthB}
Let $w \in B_n$. Then
\begin{equation}\label{eq:d_B}
\dep(w)=\sum\limits_{\{i \in [n] \mid w(i)>i\}}{(w(i)-i)} + \sum\limits_{i \in \Neg(w)}{|w(i)|} +\frac{o^B(w)-\nneg(w)}{2}.
\end{equation}
\end{thm}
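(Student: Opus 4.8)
The plan is to prove the formula by exhibiting an explicit algorithm that, given $w \in B_n$, produces a reduced factorization $w = t_{\beta_1} \cdots t_{\beta_r}$ into reflections such that $\sum_i \dep(\beta_i)$ equals the right-hand side of \eqref{eq:d_B}, together with a matching lower bound showing no factorization can do better. The upper bound is the constructive half: I would process $w$ using only right multiplication, at each stage choosing a reflection $t_r$ so that $\ell_S(wt_r) = \ell_S(w) - \ell_S(t_r)$ (i.e.\ walking down a geodesic in right weak order toward the identity through reflections), and so that the depths of the reflections used telescope to the claimed quantity. The natural bookkeeping is to "fix" one value of the window at a time: if some $w(i) > i$, multiply on the right by the reflection that moves the largest displaced entry into position, decreasing the first sum by exactly the right amount; once all positive exceedances are resolved, handle the negative entries block by block. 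The key computation here is \eqref{eq:depthreflection}, which lets us translate $\dep(\beta)$ into $(\ell_S(t_\beta)+1)/2$, so that a reduced factorization automatically gives $\sum \dep(\beta_i) = (\ell_T + \ell_S)/2$ for whatever $\ell_T$ our factorization length is — hence the adjustment term $\tfrac{o^B(w) - \nneg(w)}{2}$ should be exactly the amount by which the number of reflections we are forced to use exceeds $\ell_T(w)$-style naive counting.

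To organize the combinatorics I would reduce to the case of a single indecomposable block, since both sides of \eqref{eq:d_B} are additive over the type B decomposition: the two exceedance sums are manifestly additive because the direct sum shifts indices and values uniformly, and $o^B(w)$ and $\nneg(w)$ are additive by definition. Within one block, the sign pattern interacts with length in the way encoded by $o^B$: a block with an even number of negatives can be unwound into positions using reflections $(i,j)(-i,-j)$ of "type B length" $2|i-j|-1$ handling the negatives more cheaply, whereas a block with an odd number of negatives is forced to use one reflection of the form $(i,-i)$ or to leave one sign unpaired, costing an extra half-unit per such block — this is the content of the $+o^B(w)/2$ correction offsetting the $-\nneg(w)/2$. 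I would make this precise by tracking, along the algorithm, the running value of $\ell_S(w_i)$ and the running sum of depths, and checking the invariant that their difference is always $(\text{number of reflections used so far})/2 \cdot 1$ minus the contribution already accounted for.

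The lower bound is where the real work lies. One clean route: since for any factorization $w = t_{\beta_1}\cdots t_{\beta_r}$ we have $\sum \dep(\beta_i) = \tfrac{1}{2}\big(r + \sum \ell_S(t_{\beta_i})\big) \geq \tfrac{1}{2}\big(\ell_T(w) + \ell_S(w)\big)$, it suffices to (a) compute $\ell_S(w)$ in window notation — a standard formula involving inversions and negative sums — and (b) show that our algorithm's factorization length $r$ is the minimum possible over all factorizations achieving the depth, i.e.\ that $r$ cannot be lowered without raising $\sum \ell_S(t_{\beta_i})$. Equivalently, I would argue directly that any reduced reflection factorization must use at least $r$ reflections, by a parity/sign-counting argument: each reflection changes $\nneg$ by $0$ or $\pm 1$ or (for $(i,-i)$-type) toggles a single sign, and matching the sign pattern of $w$ within the block structure forces the count. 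I expect the main obstacle to be precisely this — proving optimality of the reflection count $r$, and in particular justifying the claim made in the introduction that in type B one genuinely cannot realize the depth with only $\ell_T(w)$ reflections, so the lower bound argument must be sharp enough to see the $o^B(w)$ term rather than merely bounding below by $(\ell_T + \ell_S)/2$. I would handle this by a careful case analysis on a single indecomposable block according to the parity of its number of negatives, showing in the odd case that any reflection factorization realizing the depth needs one more reflection than the even case of comparable size.
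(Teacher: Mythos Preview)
Your upper-bound strategy --- reduce to indecomposable blocks by additivity, then sort $w$ to the identity via right multiplication by reflections, tracking that the accumulated depth equals $d(w)$ --- is exactly what the paper does, and your intuition about pairing up negatives (cost $|w(i)|+|w(j)|-1$ per pair) versus being forced to use a lone $t_{\bar ii}$ in an odd block is the correct accounting for the $\tfrac{o^B(w)-\nneg(w)}{2}$ term.

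The lower bound is where your plan has a genuine gap. You propose to bound $\sum\dep(t_i)=\tfrac12\bigl(r+\sum\ell_S(t_i)\bigr)$ from below by controlling $r$ via a parity/sign-counting argument on the block structure. There are two problems. First, a depth-minimizing factorization is not known \emph{a priori} to be reduced, so you cannot restrict attention to reduced factorizations and invoke $\sum\ell_S(t_i)=\ell_S(w)$; reducedness is a \emph{consequence} of the theorem (it is Theorem~\ref{thm:typeBred}, proved afterward), not an input to it. For non-reduced factorizations the trade-off between $r$ and $\sum\ell_S(t_i)$ is exactly the unknown quantity. Second, and more seriously, the invariant $o^B$ is a statement about the block decomposition, not about $\nneg$ alone; reflections can merge or split blocks in ways that a sign-parity count cannot see (e.g.\ a single $t_{ij}$ acting on positive entries can fuse up to $j-i+1$ odd singleton blocks into one even block). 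Your sketch ``matching the sign pattern within the block structure forces the count'' is precisely the hard part, and it is not clear any local sign argument can produce the term $o^B(w)$.

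The paper sidesteps all of this with a much more direct inductive lower bound: it proves the single-step inequality
\[
d(w)-d(wt)\ \leq\ \dep(t)\qquad\text{for every }w\in B_n\text{ and every reflection }t\in T^B,
\]
by a (lengthy but elementary) case analysis on the type of $t$ and the relative positions of the affected entries, tracking how each of the three summands $A,B,C$ of $d$ changes. Given this, induction on $\dep(w)$ immediately yields $d(w)\leq\dep(w)$: choose $t$ with $\dep(wt)=\dep(w)-\dep(t)$, apply the inequality and the inductive hypothesis. This approach never mentions $\ell_T$, $\ell_R$, or reducedness; the connection $\dep(w)=\tfrac12(\ell_R(w)+\ell_S(w))$ is derived only \emph{after} the formula is established. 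If you want to salvage your route, you would essentially need to prove this same Lipschitz-type inequality for $d$ anyway.
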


One can also reformulate this formula as follows to more closely match the formula of Petersen and Tenner for the symmetric group:

\begin{cor}\label{cor:depthBalt}
Let $w\in B_n$.  Then
$$\dep(w)=\frac{1}{2}\left(\left(\sum\limits_{\{i\in[-n,n]\setminus\{0\}\mid w(i)>i\}} {(w(i)-i)}\right)+o^B(w)-\nneg(w)\right).$$
\end{cor}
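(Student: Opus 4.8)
The plan is to derive Corollary~\ref{cor:depthBalt} directly from Theorem~\ref{thm:depthB} by a purely algebraic manipulation of the sums, using the sign symmetry $w(-i)=-w(i)$ that defines $B_n$. The key observation is that the two sums in~\eqref{eq:d_B}, which range over positive indices, can be combined into a single sum over all nonzero indices in $[-n,n]$.

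First I would analyze the contribution of a fixed pair $\{i,-i\}$ with $i\in[n]$ to the sum $\sum_{\{j\in[-n,n]\setminus\{0\}\mid w(j)>j\}}(w(j)-j)$. There are two cases. If $w(i)>0$, then $w(-i)=-w(i)<0<-i$ as well (since $i\geq 1$), so the index $-i$ does \emph{not} satisfy $w(-i)>-i$; only $i$ can contribute, and it does so precisely when $w(i)>i$, contributing $w(i)-i$. If instead $w(i)<0$, then $w(-i)=-w(i)=|w(i)|>0>-i$, so $-i$ contributes $w(-i)-(-i)=|w(i)|+i$, while $i$ contributes nothing since $w(i)<0<i$. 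Summing over all $i\in[n]$, the ``positive-entry'' pairs contribute exactly $\sum_{\{i\in[n]\mid w(i)>i\}}(w(i)-i)$, while the ``negative-entry'' pairs contribute $\sum_{i\in\Neg(w)}(|w(i)|+i) = \sum_{i\in\Neg(w)}|w(i)| + \sum_{i\in\Neg(w)}i$.

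Next I would reconcile this with~\eqref{eq:d_B}. Comparing, the full bilateral sum equals the first two sums of~\eqref{eq:d_B} plus the extra term $\sum_{i\in\Neg(w)}i$. However, the goal formula has the bilateral sum multiplied by $\tfrac12$, together with $\tfrac12(o^B(w)-\nneg(w))$. So I would write
\begin{align*}
\frac12\left(\sum_{\{j\mid w(j)>j\}}(w(j)-j)+o^B(w)-\nneg(w)\right)
&=\frac12\left(\sum_{\{i\in[n]\mid w(i)>i\}}(w(i)-i)+\sum_{i\in\Neg(w)}|w(i)|\right)\\
&\quad+\frac12\sum_{i\in\Neg(w)}i+\frac{o^B(w)-\nneg(w)}{2}.
\end{align*}
This does not immediately match~\eqref{eq:d_B}, which suggests that the bilateral sum should instead be paired differently; in fact, the correct reading is that in the bilateral sum each ``positive'' pair already contributes twice what one wants — I would instead count pairs $\{i,-i\}$ and note that exceedances come in the symmetric pattern $w(i)-i$ and $w(-i)-(-i)$ only when... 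Here I need to recheck: actually the cleanest route is to observe $\sum_{\{j\mid w(j)>j\}}(w(j)-j) = \ell$-type quantities are \emph{not} symmetric, so the factor $\tfrac12$ must absorb a genuine doubling, which happens only if every contributing pair contributes symmetrically. This tension is exactly the main obstacle: getting the bookkeeping of the signed indices right so that the $\tfrac12$, the $o^B(w)$, and the $\nneg(w)$ all land correctly.

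The hard part will be this careful case analysis on pairs $\{i,-i\}$ and verifying that $\sum_{\{j\in[-n,n]\setminus\{0\}\mid w(j)>j\}}(w(j)-j) = 2\sum_{\{i\in[n]\mid w(i)>i\}}(w(i)-i)+2\sum_{i\in\Neg(w)}|w(i)|-\nneg(w)+o^B(w)$ reduces, after division by $2$, to~\eqref{eq:d_B}; equivalently, that the bilateral exceedance sum exceeds $2\dep(w)$ by exactly $o^B(w)-\nneg(w)$ minus itself, i.e.\ the algebra closes. Once the pair-by-pair accounting is pinned down, the corollary follows immediately by substituting into Theorem~\ref{thm:depthB} and simplifying. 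I expect no conceptual difficulty beyond this indexing care, since no new structural facts about $B_n$ are needed — only the defining relation $w(-i)=-w(i)$ and the definitions of $\Neg(w)$, $\nneg(w)$, and $o^B(w)$.
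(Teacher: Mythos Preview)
Your overall plan --- derive the bilateral-sum formula from Theorem~\ref{thm:depthB} using only the symmetry $w(-i)=-w(i)$ --- is the same as the paper's, but your case analysis contains a sign error that derails the computation and leads you to the wrong target identity.

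In your Case~1 ($w(i)>0$) you write ``$w(-i)=-w(i)<0<-i$ (since $i\geq 1$)'', but of course $-i\leq -1<0$, so this chain of inequalities is false. In fact $w(-i)>-i$ is equivalent to $w(i)<i$, so when $0<w(i)<i$ the index $-i$ \emph{does} contribute, with value $i-w(i)$. The correct pair-by-pair statement is: for each $i\in[n]$, at most one of $i,-i$ satisfies $w(j)>j$, and the contribution of the pair is $|w(i)-i|$. Hence
\[
\sum_{\substack{j\in[-n,n]\setminus\{0\}\\ w(j)>j}}(w(j)-j)\;=\;\sum_{i=1}^n |w(i)-i|
\;=\;2\!\!\sum_{\substack{i\in[n]\\ w(i)>i}}(w(i)-i)+\sum_{i=1}^n(i-w(i)),
\]
and since $\sum_i i=\sum_i |w(i)|$ one has $\sum_{i=1}^n(i-w(i))=\sum_{i=1}^n(|w(i)|-w(i))=2\sum_{i\in\Neg(w)}|w(i)|$. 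Thus the bilateral sum equals exactly $2\bigl(\sum_{i\in[n],\,w(i)>i}(w(i)-i)+\sum_{i\in\Neg(w)}|w(i)|\bigr)$, with \emph{no} $o^B(w)$ or $\nneg(w)$ correction. Your proposed target identity near the end carries spurious $-\nneg(w)+o^B(w)$ terms; if it held, the corollary would actually be false. Once the case $0<w(i)<i$ is handled correctly, dividing by $2$ and adding the untouched term $\tfrac{o^B(w)-\nneg(w)}{2}$ from Theorem~\ref{thm:depthB} gives the corollary immediately --- which is exactly what the paper does, starting instead from the identity $\sum_{i=1}^n(i-|w(i)|)=0$.
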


The group $B_n$ can also be realized as the subgroup of the symmetric group $S_{2n}$ consisting of the permutations $w$ with $w(i)+w(2n+1-i)=2n+1$ for all $i\in[n]$.  Under this realization of $B_n$, the formula will translate to a similar form, but the term $\nneg$ will not appear any more.

Using our formula, we can easily show:
\begin{cor}\label{cor:maxdepthB}
For each $w \in B_n$ we have $\dep(w) \leq {n+1 \choose 2}$, with equality if and only if $w= [\bar{1}, \bar{2}, \ldots, \bar{n}]$.
\end{cor}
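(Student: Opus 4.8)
The plan is to bound each of the three terms in the depth formula \eqref{eq:d_B} separately and then show that equality forces $w=[\bar1,\ldots,\bar n]$. First, I would observe that the first sum $\sum_{\{i\in[n]\mid w(i)>i\}}(w(i)-i)$ is nonnegative, being a sum of positive quantities (and empty, hence $0$, when no such $i$ exists). Second, for the second sum I would use the fact that $\{|w(i)| \mid i\in\Neg(w)\}$ is a set of $\nneg(w)$ distinct elements of $[n]$, so $\sum_{i\in\Neg(w)}|w(i)| \leq n + (n-1) + \cdots + (n-\nneg(w)+1)$. Third, for the adjustment term I would note that $o^B(w)$, the number of blocks with an odd number of negative entries, is at most $\nneg(w)$, since each such block contributes at least one negative entry; hence $\frac{o^B(w)-\nneg(w)}{2}\leq 0$. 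Combining, $\dep(w)\leq \sum_{j=0}^{\nneg(w)-1}(n-j)$, and since this partial sum of the largest values in $[n]$ is maximized (over all choices of $\nneg(w)\in\{0,\ldots,n\}$) when $\nneg(w)=n$, giving $\binom{n+1}{2}$, the inequality $\dep(w)\leq\binom{n+1}{2}$ follows.

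For the equality case, I would trace back through the three inequalities. Equality in the second bound forces $\{|w(i)|\mid i\in\Neg(w)\} = \{n-\nneg(w)+1,\ldots,n\}$, i.e.\ the $\nneg(w)$ largest absolute values are exactly the negative entries; equality requires first that $\nneg(w)=n$ (otherwise even the best case $\nneg(w)=n-1$ gives $\binom{n+1}{2}-1<\binom{n+1}{2}$, so strictly fewer than $n$ negative entries cannot reach the bound), whence every entry is negative and $w=[\overline{w(1)},\ldots,\overline{w(n)}]$ with $(|w(1)|,\ldots,|w(n)|)$ a permutation of $[n]$. Then the first sum vanishes automatically (all $w(i)<0<i$), and the second sum equals $\binom{n+1}{2}$; the adjustment term $\frac{o^B(w)-n}{2}$ must vanish, forcing $o^B(w)=n$, i.e.\ every block has an odd number of negative entries. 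A block of size $k$ with all $k$ entries negative has $k$ negative entries, so for $o^B(w)=n$ with $\nneg(w)=n$ we need every block to have odd size; but I would further argue that the only all-negative indecomposable signed permutation is the single block $[\bar1]$ of size $1$: if $|w|$ (the underlying unsigned permutation, all entries negated back to positive) were a nontrivial block of size $k\geq 2$, then$\ldots$ — here I would check that the all-negative signed permutation built on any unsigned $u\in S_k$ is indecomposable in $B_k$ precisely when $u$ is, but that an all-negative block still could have $k\geq 2$; so instead equality in the adjustment term together with $\nneg(w)=n$ only forces all block sizes odd, and I would need to also invoke that the first sum is zero combined with the precise value — actually the cleanest route is: once $\nneg(w)=n$, the value is exactly $\binom{n+1}{2} + \frac{o^B(w)-n}{2}$, already $\leq\binom{n+1}{2}$ with equality iff $o^B(w)=n$; and separately, among all $w$ with $\nneg(w)=n$, those with $o^B(w)=n$ include $[\bar1,\ldots,\bar n]$ (each singleton block has one negative entry). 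To nail down uniqueness I would show that any all-negative $w$ with a block of size $\geq 2$ has that block contributing a positive amount to the first sum via the underlying permutation structure — more carefully, reindex within the block: if $w^j$ is an all-negative block on positions forming an interval $I$, then $|w|$ restricted to $I$ is an indecomposable unsigned permutation, which (unless $|I|=1$) has some position $i\in I$ with $|w(i)|>i'$ for the local index, but since all signs are negative this does \emph{not} add to the first sum. So the first sum really is always $0$ here, and uniqueness must come purely from $o^B(w)=n$: I claim the all-negative $w$ with every block odd and $o^B=n$ that also realizes the \emph{global} bound $\binom{n+1}{2}$ in the second sum is only the identity-underlying one, because the second-sum bound $\sum|w(i)|$ over a permutation of $[n]$ equals $\binom{n+1}{2}$ for \emph{every} permutation. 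Hence in fact every all-negative signed permutation with all odd block sizes achieves depth $\binom{n+1}{2}$ — so I must recheck the claimed uniqueness.

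The main obstacle, as the previous paragraph reveals, is the equality analysis: the naive three-term bound shows many all-negative elements attain $\binom{n+1}{2}+\frac{o^B(w)-n}{2}$, and one must show $o^B(w)=n$ forces $w=[\bar1,\ldots,\bar n]$. The resolution is a structural lemma about blocks: \emph{an indecomposable signed permutation $u\in B_k$ with $k\geq 2$ has an even number of negative entries.} Granting this, $o^B(w)=n=\nneg(w)$ together with the block decomposition $w=w^1\oplus\cdots\oplus w^m$ forces every block $w^j\in B_{k_j}$ to satisfy $k_j\geq 2 \Rightarrow o^B(w^j)=0$, contradicting that it contributes to $o^B(w)$; hence $k_j=1$ for all $j$, so each block is $[\bar1]$, giving $w=[\bar1,\ldots,\bar n]$. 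So the real work is proving this block lemma. I would prove it by contradiction: suppose $u\in B_k$ is indecomposable with $k\geq 2$ and an odd number of negatives. Consider the entry $1$ (as an absolute value): it sits in some position $p$ with $|u(p)|=1$. If $p=1$ and $u(1)=1$, the first coordinate splits off as a block, contradicting indecomposability (for $k\geq2$); if $u(1)=\bar1$, similarly $[\bar1]\oplus(\cdots)$, contradiction. So $p\geq 2$, meaning there is some $i<p$ with $|u(i)|$ large, and more usefully $u(1)\neq\pm1$. I would instead run an induction on $k$ using that indecomposability is equivalent to: for no $j\in[k-1]$ is $\{|u(1)|,\ldots,|u(j)|\}=[j]$. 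A clean approach: the parity of the number of negative entries of $u\in B_k$ equals the parity of $\ell_S(u)-\ell_{S_k}(|u|)$ up to a correction, or more directly, $u$ being all-glued (indecomposable) and the involution $w_0=[\bar1,\ldots,\bar k]$ being central-ish — actually the slickest argument is: the determinant of $u$ acting on $\mathbb R^k$ by signed permutation is $(-1)^{\nneg(u)}\cdot\sgn(|u|)$, and I would relate indecomposability to a connectivity/cycle condition forcing these parities to match. I expect this parity-of-negatives lemma for indecomposable blocks to be the crux; once it is in hand, both the bound and the uniqueness in Corollary \ref{cor:maxdepthB} fall out immediately from \eqref{eq:d_B}.
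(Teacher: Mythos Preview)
Your plan has two genuine gaps.

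First, the upper-bound step is broken. You note that $A:=\sum_{w(i)>i}(w(i)-i)\geq 0$, but for an \emph{upper} bound on $\dep(w)=A+B+C$ you need to bound $A$ from above, not below. Your ``Combining'' inequality $\dep(w)\leq\sum_{j=0}^{\nneg(w)-1}(n-j)$ is false as stated: for $w=[2,1]\in B_2$ one has $\dep(w)=1$ while your right-hand side equals $0$; for $w=[n,1,2,\ldots,n-1]$ it gives $n-1\leq 0$. The paper repairs this by accounting per \emph{value} rather than per summand of \eqref{eq:d_B}: a value $v\in[n]$ appearing with positive sign contributes at most $v-1$ (to $A$), while a value appearing with negative sign contributes $v-\tfrac12$ (namely $v$ to $B$ and $-\tfrac12$ from the $-\nneg(w)/2$ part of $C$); on top of this, the $o^B(w)/2$ part of $C$ contributes at most $\tfrac12$ per block, hence at most $\tfrac12$ per entry. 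Summing gives $\dep(w)\leq\sum_{v=1}^n v=\binom{n+1}{2}$, with equality only if every entry is negative and every entry is its own block.

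Second, your uniqueness route rests on a false lemma. The claim ``an indecomposable $u\in B_k$ with $k\geq 2$ has an even number of negative entries'' fails already for $[\bar 2,1]\in B_2$ (indecomposable, one negative), and even in the all-negative regime you are in, $[\bar 3,\bar 1,\bar 2]\in B_3$ is indecomposable with three negatives. The correct and much shorter argument is pure counting: once equality forces $o^B(w)=n$, observe that $o^B(w)$ is at most the number of blocks, which is at most $n$; hence there are exactly $n$ blocks, each of size $1$, and (since $\nneg(w)=n$) each is $[\bar 1]$, so $w=[\bar 1,\ldots,\bar n]$.
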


Petersen and Tenner ask if $\dep(w)$ can always be realized by a product of $\ell_T(w)$ reflections.  The following example shows that this is impossible in type B.

\begin{exa}\label{Brefllengthcounterex}
Let $w=[\bar{4},\bar{2},\bar{3},\bar{1}] \in B_4$.  Then $\dep(w)=8$, since $w$ is indecomposable and $o^B(w)=0$.  However, $\ell_T(w)=3$, and there are essentially only two ways to write $w$ as the product of 3 reflections.  One is to write $w$ as the product of $t_{\bar{1} 4}=[\bar{4},2,3,\bar{1}]$, $t_{\bar{2}2}=[1,\bar{2},3,4]$, and $t_{\bar{3}3}=[1,2,\bar{3},4]$ in some order.  (These reflections pairwise commute.)  The sum of the depths of these reflections is $9>8$.  One can also write $w$ as the product of $t_{\bar{1}4}=[\bar{4},2,3,\bar{1}]$, $t_{\bar{2}3}=[1,\bar{3},\bar{2},4]$, and $t_{23}=[1,3,2,4]$ in some order.  The sum of the depths of these reflections is also $9>8$.
\end{exa}

However, we will show that our algorithm always produces a factorization of $w$ with the following property.

\begin{thm}\label{thm:typeBred}
Let $w\in B_n$.  Then there exist reflections $t_1,\ldots,t_r$ such that $w=t_1\cdots t_r$, $\dep(w)=\sum_{i=1}^r \dep(t_i)$, and $\ell_S(w)=\sum_{i=1}^r \ell_S(t_i)$.
\end{thm}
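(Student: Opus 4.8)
The plan is to prove Theorem~\ref{thm:typeBred} constructively, by exhibiting an algorithm that peels off one reflection at a time from the right, and to show that each step simultaneously decreases depth by $\dep(t_i)$ and length by $\ell_S(t_i)$. Concretely, given $w\in B_n$ with $w\neq e$, I would first reduce to the indecomposable case: since depth, length, and the proposed reflection factorizations are all additive over the type B decomposition $w=w^1\oplus\cdots\oplus w^k$ (a reflection supported inside one block does not interact with the others), it suffices to treat a single indecomposable block. Then I would describe the recursive step: choose a distinguished reflection $t=t_r$ so that $wt$ is again a signed permutation with $\dep(w)=\dep(wt)+\dep(t)$ and $\ell_S(w)=\ell_S(wt)+\ell_S(t)$, and iterate on $wt$.

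The heart of the argument is choosing the right reflection $t$ at each step. The natural candidate is governed by the formula in Theorem~\ref{thm:depthB}: look at where the ``excess'' lives. If $w$ has a negative entry, say $w(j)<0$ with $|w(j)|$ as large as possible (or chosen according to whatever rule the type B algorithm in Section~\ref{sec:typeBalg} dictates), one wants to multiply by a reflection that either flips the sign of that entry or moves it toward position $1$; if $w$ has no negative entries, one is essentially in the symmetric group and can borrow the Petersen--Tenner style move, multiplying on the right by the transposition $(i,w(i))$ for a suitable exceedance. In each case one must verify two numerical identities: that the depth decreases by exactly $\dep(t)=\frac{\ell_S(t_\beta)+1}{2}$, using the explicit formula~\eqref{eq:d_B} and tracking how the three summands (the exceedance sum, the $\sum_{i\in\Neg(w)}|w(i)|$ term, and the correction $\tfrac{o^B(w)-\nneg(w)}{2}$) change; and that the length drops by exactly $\ell_S(t)$, i.e.\ that $wt <_R w$ with $\ell_S(w)-\ell_S(wt)=\ell_S(t)$, which by the subword/exchange property in type B amounts to checking an inversion-counting statement (the relevant reflection's inversions all lie among those of $w$, and no two of them are ``split''). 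The $\dep$ identity will likely already be established while proving Theorem~\ref{thm:depthB} in Section~\ref{sec:typeBalg}, so the genuinely new content here is the length bookkeeping.

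The main obstacle I expect is the interplay between sign flips and the oddness correction term $o^B(w)$: flipping a sign can merge or split type B blocks, which changes $o^B$ in a way that is not simply additive, so one has to choose the reflection carefully enough that the block structure degrades predictably (for instance, only ever reducing an indecomposable block to a shorter one plus trivial blocks, never creating a new large block). A secondary subtlety is that, unlike Petersen--Tenner, we are not allowed to use $\ell_T(w)$ reflections (Example~\ref{Brefllengthcounterex} shows this is genuinely impossible), so one cannot lean on reflection-length minimality as a sanity check; instead the invariant to maintain throughout the recursion is precisely the pair of equalities $\dep(w')=\sum\dep(t_i')$ and $\ell_S(w')=\sum\ell_S(t_i')$ for the remaining element $w'$, with the reflection-count potentially exceeding $\ell_T(w)$.

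A clean way to organize the write-up is the following.

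\begin{proof}[Proof sketch of Theorem~\ref{thm:typeBred}]
Since the type B decomposition $w=w^1\oplus\cdots\oplus w^k$ induces a corresponding decomposition of the quantities $\dep$, $\ell_S$, and any reflection factorization (reflections within one block act trivially on the others), we may assume $w$ is indecomposable. We induct on $\ell_S(w)$, the case $w=e$ being trivial. The algorithm of Section~\ref{sec:typeBalg} produces a reflection $t_r\in T$ with $w=w't_r$, $\dep(w)=\dep(w')+\dep(t_r)$ — this is exactly what is verified in the course of proving Theorem~\ref{thm:depthB} — and, one checks, $\ell_S(w)=\ell_S(w')+\ell_S(t_r)$, i.e.\ $w'<_R w$ along a geodesic passing through $w$. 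The latter follows by verifying that the inversion set of $t_r$ (in the type B sense) is contained in that of $w$ and is ``closed'' in the appropriate sense, so that $\ell_S(w')=\ell_S(w)-\ell_S(t_r)$. By induction $w'$ admits reflections $t_1,\ldots,t_{r-1}$ with $w'=t_1\cdots t_{r-1}$, $\dep(w')=\sum_{i=1}^{r-1}\dep(t_i)$, and $\ell_S(w')=\sum_{i=1}^{r-1}\ell_S(t_i)$; appending $t_r$ finishes the induction.
\end{proof}
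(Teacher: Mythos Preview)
Your approach is essentially the same as the paper's: the paper proves Theorem~\ref{thm:typeBred} as Theorem~\ref{reduced}, verifying for each reflection $t$ produced by the algorithm that $\ell_S(wt)=\ell_S(w)-\ell_S(t)$ via the explicit entry-wise criteria of Lemma~\ref{lem:typeBreflprodred} (which is your ``inversion set of $t_r$ contained in that of $w$'' criterion made concrete for each of the three reflection types). Your informal description of the algorithm's choice of $t_r$ is slightly off --- the paper first shuffles positive exceedances (Step~1) and only then unsigns (Step~2), and the Step~1 move is $t_{ij}$ with $w(j)$ minimal among $i<j\le w(i)$, not $t_{i,w(i)}$ --- but since your proof sketch correctly defers to ``the algorithm of Section~\ref{sec:typeBalg}'' and correctly isolates the length bookkeeping as the new content, the overall structure matches the paper.
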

One says that $w=t_1\cdots t_r$ is a {\bf reduced  factorization} if $\ell_S(w)=\sum_{i=1}^r \ell_S(t_i)$.  This theorem says that the depth of $w$ is always realized by a reduced factorization of $w$ into transpositions.  Note that we can consider $S_n$ as the subgroup of $B_n$ consisting of permutations with no negative signs or equivalently as the Coxeter subgroup generated by $s_1,\ldots,s_{n-1}$.  Hence this theorem holds for $S_n$, and it is new even in that case.

\subsection{Type D}
We define $D_n$ to be the subgroup of $B_n$ consisting of signed permutations with an even number of negative entries when written in window notation, or, more precisely, we define $D_n:=\{w\in B_n\mid \nneg(w)\mathrm{ \ is \ even}\}.$
Let
\begin{eqnarray*}
s_0^D&:=&[\bar{2},\bar{1},3,\ldots,n]=(1,-2)(-1,2), 
\end{eqnarray*}
and $S_D=\{s_0^D,s_1,\ldots,s_{n-1}\}$, where the $s_i$'s are defined as in type B for $i\in[n-1]$. Then $(D_n,S_D)$ forms a Coxeter system.

To state our formula in type D, we first need to be more careful about our notion of decomposibility.  Given a signed permutation $w\in D_n$, we can give a decomposition of $w$ as $w=w^1\oplus\cdots\oplus w^k$, where we insist that each $w^i\in D_m$, $m\leq n$ and, furthermore, no $w^i$ is a direct sum of elements of $D_p$, $p<m$.  We call this decomposition of $w$ a {\bf type D decomposition} and the blocks of this decomposition {\bf type D blocks}.  

We can also look at $w \in D_n$ as an element of $B_n$ and consider its type B decomposition. Note that a type D block $w^i$ may split into $b_i$ smaller B blocks, which we denote $w^i=w^i_1\oplus\cdots \oplus w^i_{b_i}$, where possibly $b_i=1$. Note that, whenever $b_i>1$, $w_1^i$ and $w_{b_i}^i$ must have an odd number of negative entries and the remaining central B blocks $w^i_2,\ldots,w^i_{b_i-1}$ must have an even number of negative entries.

\begin{defn}
For each $w \in D_n$, we define the {\bf D-oddness} of $w$, denoted by $o^D(w)$, to be the difference between the number of type B blocks and the number of type D blocks, or, equivalently, we define $o^D(w):=\sum_i (b_i-1)$.
\end{defn}

For example, if $w=[-2,1,3,4,-5,-7,-8,6]$, then the type D decomposition is $w=w^{1}\oplus w^2$ where $w^1=[-2,1,3,4,-5]$ and $w^2=[-2,-3,1]$, while the type B decomposition has $w=w^1_1\oplus w^1_2\oplus w^1_3\oplus w^1_4\oplus w^2_1$ (so $b_1=4$ and $b_2=1$) with $w^1_1=[-2,1]$, $w^1_2=[1]$, $w^1_3=[1]$, $w^1_4=[-1]$, and $w^2_1=w^2=[-2,-3,1]$. Hence $o^D(w)=3$. The oddest element in $D_n$ is $[\bar{1},2,\ldots,n-1,\bar{n}]$, with D-oddness $n-1$.
\smallskip

Since every B-decomposable type D block has exactly two type B blocks with an odd number of negative entries, and the D-oddness of $w$ is at least its number of B-decomposable type D blocks, we have $o^D(w)\geq\frac{1}{2}o^B(w)$ (where $o^B(w)$ is calculated by considering $w$ as an element of $B_n$ via the embedding of $D_n$ in $B_n$).

Now we can state our formula for depth in type D.  Note that depth depends on the Coxeter system, so $w\in D_n$ will have different depth considered as an element of $D_n$ compared to considering it as an element of $B_n$.

\begin{thm}\label{thm:depthD}
Let $w \in D_n$. Then
\begin{equation}\label{eq:d_D}
\dep(w)=\left(\sum\limits_{\{i \in [n] \mid w(i)>i\}}{(w(i)-i)}\right) + \left(\sum\limits_{i \in \Neg(w)}{|w(i)|} \right)+(o^D(w)-\nneg(w)).
\end{equation}
\end{thm}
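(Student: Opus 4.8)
The plan is to mirror the structure used for the type B formula (Theorem \ref{thm:depthB}), but with the type D Coxeter generators, and to obtain the formula via a two-sided bound. First I would establish the lower bound $\dep(w)\geq (\sum_{w(i)>i}(w(i)-i))+(\sum_{i\in\Neg(w)}|w(i)|)+(o^D(w)-\nneg(w))$. The natural tool here is the path-in-the-Bruhat-graph interpretation of depth recalled in the introduction: $\dep(w)$ is the minimal cost of a path from $e$ to $w$ in the Bruhat graph where the edge for a reflection $t_\beta$ carries weight $\dep(\beta)=(\ell_S(t_\beta)+1)/2$. I would introduce an appropriate ``potential'' or statistic $\Phi(w)$ equal to the claimed right-hand side and show that multiplying $w$ by any reflection $t_\beta$ changes $\Phi$ by at most $\dep(\beta)$; summing along any factorization then gives $\Phi(w)=\Phi(w)-\Phi(e)\leq\sum_i\dep(\beta_i)$, hence $\Phi(w)\leq\dep(w)$. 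The verification that $\Phi$ decreases by at most $\dep(\beta)$ under right multiplication by $t_\beta$ is a case analysis on the type of reflection in $D_n$ (the reflections $(i,j)(-i,-j)$, $(i,-j)(-i,j)$ with $i<j$), tracking separately the effect on the exceedance sum, on the $\Neg$ sum, and on $o^D-\nneg$; the key subtlety is controlling how $o^D$ can jump when a reflection merges or splits type D blocks, and here I would lean on the inequality $o^D(w)\geq\frac12 o^B(w)$ together with the analysis of how type B blocks fuse.

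For the upper bound I would exhibit an explicit reduced factorization achieving the claimed value, which is exactly what the type D algorithm of Section \ref{sec:typeDalg} is designed to do. Concretely, given $w\in D_n$, the algorithm produces a reflection $t_r$ such that $w t_r$ is ``closer'' to the identity, with the three properties $\ell_S(w)=\ell_S(wt_r)+\ell_S(t_r)$, $\Phi(w)=\Phi(wt_r)+\dep(t_r)$, and $wt_r\in D_n$; iterating terminates at the identity and yields $\dep(w)\leq\sum_i\dep(t_i)=\Phi(w)$. So the real content of the upper bound is: define the algorithm's choice of $t_r$ (depending on whether $w$ is type D decomposable, on the largest displaced entry, and on the sign pattern), and prove the three invariants are preserved. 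This should be done block by block: because both sides of \eqref{eq:d_D} are additive over type D blocks (the exceedance and $\Neg$ sums obviously are, and $o^D(w)=\sum_i(b_i-1)$ is by definition), it suffices to treat a single indecomposable type D block, and within that to handle the B-indecomposable case essentially as in type B and then account for the internal B-block structure separately.

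I expect the main obstacle to be the bookkeeping around $o^D$, i.e. the interaction between the type D and type B block decompositions. In type B one only ever has to worry about the parity of the number of negatives in a block and the single correction term $(o^B-\nneg)/2$; in type D the generator $s_0^D$ flips two coordinates at once, so a single type D block can internally be a chain of several type B blocks with a prescribed parity pattern (odd, even, \dots, even, odd), and the algorithm's reflections will repeatedly split off or absorb these internal B-blocks. Getting the cost accounting exactly right — showing that each such step changes $\Phi$ by precisely $\dep(t)$ rather than merely bounding it — is where the argument is most delicate, and it is also where one must be careful that the factorization stays inside $D_n$ at every stage (every intermediate $w_i$ must have an even number of negatives), which constrains the admissible reflections. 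Once the single-block analysis is complete, reassembling via additivity over type D blocks and combining the two bounds gives the theorem. Finally, I would note that, as in type B, the same factorization simultaneously realizes length, so the type D analogue of Theorem \ref{thm:typeBred} comes out of the same argument.
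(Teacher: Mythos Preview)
Your proposal is correct and is exactly the paper's approach: the upper bound $\dep(w)\le d(w)$ is Proposition~\ref{prop:upperboundD}, obtained from the algorithm, and the lower bound is your potential-function inequality, which is Lemma~\ref{l:DeltaD}, proved by a direct case analysis on the reflection type (without needing the inequality $o^D\ge\tfrac12 o^B$). One small remark: your concern about intermediate elements staying in $D_n$ is a non-issue, since every type~D reflection changes either zero or two signs.
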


One also has a reformulation more closely matching the formula of Petersen and Tenner for the symmetric group:

\begin{cor}\label{cor:depthDalt}
Let $w\in D_n$.  Then
$$\dep(w)=\frac{1}{2}\left(\left(\sum\limits_{\{i\in[-n,n]\setminus\{0\}\mid w(i)>i\}} {(w(i)-i)}\right)-2 \! \ \nneg(w)\right)+o^D(w).$$
\end{cor}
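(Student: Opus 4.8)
The plan is to derive Corollary~\ref{cor:depthDalt} directly from Theorem~\ref{thm:depthD} by a purely elementary manipulation of index sets, with no further input about Coxeter groups. Comparing the two expressions, and noting that the $o^D(w)$ and $-\nneg(w)$ terms already agree, it suffices to prove the identity
\[
\sum_{\{i\in[n]\mid w(i)>i\}}(w(i)-i)\;+\;\sum_{i\in\Neg(w)}|w(i)|
\;=\;\frac{1}{2}\sum_{\{i\in[-n,n]\setminus\{0\}\mid w(i)>i\}}(w(i)-i).
\]

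First I would split the right-hand sum according to the sign of $i$. The terms with $i>0$ contribute exactly $\sum_{\{i\in[n]\mid w(i)>i\}}(w(i)-i)$. For a term with $i<0$, write $i=-j$ with $j\in[n]$; since $w(-j)=-w(j)$, the condition $w(i)>i$ becomes $w(j)<j$ and the summand $w(i)-i$ equals $j-w(j)$. Hence the right-hand side equals
\[
\frac12\left(\sum_{\{i\in[n]\mid w(i)>i\}}(w(i)-i)+\sum_{\{j\in[n]\mid w(j)<j\}}(j-w(j))\right).
\]

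Next I would compute the ``total displacement'' $\sum_{i=1}^n(w(i)-i)$ in two ways. On the one hand it equals $\sum_{\{i\mid w(i)>i\}}(w(i)-i)-\sum_{\{i\mid w(i)<i\}}(i-w(i))$, the fixed points contributing nothing. On the other hand, since $w$ permutes absolute values, $\sum_{i=1}^n|w(i)|=\sum_{i=1}^n i=\binom{n+1}{2}$, so $\sum_{i=1}^n w(i)=\binom{n+1}{2}-2\sum_{i\in\Neg(w)}|w(i)|$, giving $\sum_{i=1}^n(w(i)-i)=-2\sum_{i\in\Neg(w)}|w(i)|$. Equating the two expressions yields $\sum_{\{i\mid w(i)<i\}}(i-w(i))=\sum_{\{i\mid w(i)>i\}}(w(i)-i)+2\sum_{i\in\Neg(w)}|w(i)|$. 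Substituting this into the half-sum above collapses it to exactly the left-hand side of the claimed identity, which finishes the argument.

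There is essentially no obstacle here; the only point needing a moment's care is the change of variables $i\mapsto -i$ combined with $w(-i)=-w(i)$, which flips the inequality defining the index set and turns an ``excedance'' $w(i)-i$ into a ``deficiency'' $j-w(j)$. I would also remark that the identical computation applied to Theorem~\ref{thm:depthB} proves Corollary~\ref{cor:depthBalt} (and the same bookkeeping recovers the Petersen--Tenner formula on $S_n$), so it may be cleanest to isolate $\sum_{\{i\in[n]\mid w(i)>i\}}(w(i)-i)+\sum_{i\in\Neg(w)}|w(i)|=\tfrac12\sum_{\{i\in[-n,n]\setminus\{0\}\mid w(i)>i\}}(w(i)-i)$ as a stand-alone lemma and invoke it in both corollaries.
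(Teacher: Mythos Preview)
Your proof is correct and follows essentially the same approach as the paper: the paper proves Corollary~\ref{cor:depthBalt} by the identical elementary manipulation (starting from $\sum_{i=1}^n(i-|w(i)|)=0$, deriving $\tfrac12\sum_{i=1}^n(i-w(i))=\sum_{w(i)<0}|w(i)|$, and then using the change of variables $i\mapsto -i$), and then says the proof of Corollary~\ref{cor:depthDalt} is exactly the same. Your suggestion to isolate the identity as a stand-alone lemma and invoke it for both corollaries is in fact precisely how the paper handles it.
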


Using our formula, we can show the following:
\begin{cor}\label{cor:maxdepthD}
For each $w \in D_n$ we have $\dep(w)\leq {n \choose 2} + \left \lfloor {\frac{n}{2}} \right \rfloor$.  Equality occurs for $2^{\frac{n-2}{2}}$ elements if $n$ is even and $2^{\frac{n+1}{2}}$ elements if $n$ is odd.
\end{cor}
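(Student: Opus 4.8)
The starting point is the formula of Theorem~\ref{thm:depthD}, which I would first rewrite as
$$\dep(w)=\binom{n}{2}+o^D(w)-s(w),\qquad s(w):=\sum_{\{i\in[n]\,:\,w(i)>0\}}\bigl(\min(w(i),i)-1\bigr).$$
This follows from Corollary~\ref{cor:depthDalt} via the identity $\sum_{\{i\in[-n,n]\setminus\{0\}\,:\,w(i)>i\}}(w(i)-i)=\sum_{i=1}^{n}|w(i)-i|$ (the pair $\{i,-i\}$ contributes $|w(i)-i|$) together with $|w(i)|+i-|w(i)-i|=2\min(w(i),i)$ if $w(i)>0$ and $=0$ if $w(i)<0$. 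Since every term of $s(w)$ is non-negative, the asserted upper bound is equivalent to the combinatorial inequality
$$o^D(w)-s(w)\le\lfloor n/2\rfloor,\qquad(\ast)$$
and the equality cases of the corollary correspond to those of $(\ast)$.

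To prove $(\ast)$ I would estimate $o^D(w)$ through the type B block structure of $w$. Let $k$ be the number of type B blocks, $q$ the number of them containing an odd number of negative entries, and split the singleton blocks $\{i\}$ (equivalently, the fixed points of $|w|$) into the $k_1^+$ with $w(i)>0$ and the $k_1^-$ with $w(i)<0$. Reading the B blocks left to right, each type D block arises by pairing the odd B blocks consecutively and absorbing the even B blocks that fall strictly between a pair, so $o^D(w)=\tfrac12q+e$ where $e$ is the number of absorbed even blocks. From $e\le k-q$, then $q\ge k_1^-$, and finally $k\le\tfrac12(n+k_1^-+k_1^+)$ (at most $\tfrac12(n-k_1^--k_1^+)$ blocks have size $\ge2$), one gets $o^D(w)\le\tfrac12(n+k_1^+)$; and since the $k_1^+$ positive fixed points sit at distinct positions, $s(w)\ge\binom{k_1^+}{2}$. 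Hence $o^D(w)-s(w)\le\tfrac12\bigl(n+k_1^+(2-k_1^+)\bigr)$, which is $\le n/2$ unless $k_1^+=1$; as the left side is an integer, this proves $(\ast)$ when $n$ is even, and when $n$ is odd with $k_1^+\ne1$. In the one remaining case — $n$ odd, $k_1^+=1$ — either $s(w)\ge1$, and then $o^D(w)-s(w)\le\tfrac12(n+1)-1=\lfloor n/2\rfloor$; or $s(w)=0$, which forces the unique positive fixed point to be $1$ and all other entries negative, so $w=[1]\oplus w'$ with $w'$ an all-negative element of $D_{n-1}$ and $o^D(w)=o^D(w')$. Here I would invoke the easy auxiliary fact that an all-negative $u\in D_m$ with $m$ even has $o^D(u)\le m/2$ (its B-block sizes $m_j$ have parity equal to their sizes, so $o^D(u)\le(\#\mathrm{blocks})-q/2\le m/2$ because $\sum m_j\ge 2(\#\mathrm{blocks})-q$), which gives $o^D(w)\le\tfrac12(n-1)$.

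For the equality statement I would retrace the above to determine exactly when every inequality is tight. When $n$ is even this forces $w$ to be all-negative, with every B block of size $1$ or $2$ and with every size-$2$ block absorbed inside a consecutive pair of size-$1$ blocks; encoding the block word by singletons $O$ and transpositions $E$, with the $E$'s freely placed among the $q/2$ gaps between paired $O$'s, yields $\sum_{m\ge0}\binom{n/2-1}{m}=2^{(n-2)/2}$ such elements. When $n$ is odd, the maximizers fall into three disjoint families, indexed by $(k_1^+,s(w))\in\{(0,0),(1,0),(1,1)\}$: the elements $[1]\oplus w'$ with $w'$ an $o^D$-maximal all-negative element of $D_{n-1}$; the elements $[\,\overline{1},2\,]\oplus w''$ with $w''$ all-negative of the analogous restricted block shape; and the elements having a single positive entry whose leading B block is $\{1,2\}$. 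A short stars-and-bars count shows the three families have sizes $2^{(n-3)/2}$, $2^{(n-3)/2}$, and $2^{(n-1)/2}$, which sum to $2^{(n+1)/2}$. The main obstacle is exactly this last step — verifying that the equality configurations are precisely the ones listed, with nothing missing and no overlap between the three odd-$n$ families, and then evaluating the resulting binomial sums; by contrast, once $(\ast)$ has been isolated the upper bound itself is routine.
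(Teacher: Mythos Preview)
Your argument is correct and takes a genuinely different route from the paper's. The key innovation is the rewriting
$$\dep(w)=\binom{n}{2}+o^D(w)-s(w),$$
which cleanly reduces the bound to the purely combinatorial inequality $o^D(w)-s(w)\le\lfloor n/2\rfloor$. Your chain of estimates---$o^D=q/2+e$, $e\le k-q$, $q\ge k_1^-$, $k\le\tfrac12(n+k_1^-+k_1^+)$, $s(w)\ge\binom{k_1^+}{2}$---is tight and handles the parity issues for odd $n$ with appropriate sub-cases. The paper, by contrast, argues more informally: it bounds the contribution of each entry $w(i)$ to the formula of Theorem~\ref{thm:depthD} directly (at most $|w(i)|-1$, essentially), asserts with minimal justification that a maximal element has at most one positive entry, and then simply writes down the extremal families explicitly. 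Your approach is more systematic and yields a rigorous proof of the bound; the paper's is shorter but leaves more to the reader.

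For the equality count, the two approaches converge but are organized differently. The paper lists the maximizers in one uniform template---a fixed prefix on positions $1,2$ (one choice when $n$ is even, four when $n$ is odd) followed by $\lfloor(n-2)/2\rfloor$ independently orientable pairs and a final $\bar n$---and reads off $2^{(n-2)/2}$ or $4\cdot 2^{(n-3)/2}$ immediately. Your three odd-$n$ families, indexed by $(k_1^+,s(w))\in\{(0,0),(1,0),(1,1)\}$, are exactly the paper's four prefixes regrouped: $[1,\bar2]$ gives $(1,0)$, $[\bar1,2]$ gives $(1,1)$, and $[2,\bar1]$, $[\bar2,1]$ both give $(0,0)$, which explains your sizes $2^{(n-3)/2}+2^{(n-3)/2}+2^{(n-1)/2}=2^{(n+1)/2}$. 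You are right that back-tracing the equality conditions through your inequality chain is the laborious part (for instance, ruling out $k_1^+\in\{1,2\}$ when $n$ is even requires the side observations that $q$ is even and that a leading even $B$-block can never be absorbed); the paper sidesteps this by exhibiting the extremal elements directly. Both arguments are valid; yours trades brevity for a clearer audit trail.
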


The example in type B showing that $\dep(w)$ cannot always be realized by a product of $\ell_T(w)$ reflections also works in type D (though the depths are different).

\begin{exa}\label{Drefllengthcounterex}
Let $w=[\bar{4},\bar{2},\bar{3},\bar{1}]\in D_4$.  Then $\dep(w)=6$, since $w$ is indecomposable and $o^D(w)=0$.  However, $\ell_T(w)=3$, and the only ways to write $w$ as the product of 3 reflections are as the product of $t_{\bar{1}4}=[\bar{4},2,3,\bar{1}]$, $t_{\bar{2}3}=[1,\bar{3},\bar{2},4]$, and $t_{\bar{3}3}=[1,3,2,4]$ in some order.  (These reflections pairwise commute.)  The sum of the depths of these reflections is $7>6$.
\end{exa}

As in type B, however, our algorithm always produces a reduced factorization of $w$.

\begin{thm}\label{thm:typeDred}
Let $w\in D_n$.  Then there exist reflections $t_1,\ldots,t_r$ such that $w=t_1\cdots t_r$, $\dep(w)=\sumlim_{i=1}^r \dep(t_i)$, and $\ell_S(w)=\sumlim_{i=1}^r \ell_S(t_i)$.
\end{thm}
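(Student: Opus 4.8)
The plan is to build the desired reduced factorization algorithmically, mimicking the structure already used to prove Theorem~\ref{thm:depthD}: given $w \in D_n$ we will exhibit a single reflection $t_r$ such that $\dep(w) = \dep(wt_r) + \dep(t_r)$ and simultaneously $\ell_S(w) = \ell_S(wt_r) + \ell_S(t_r)$, and then recurse on $wt_r$. Since the algorithm behind Theorem~\ref{thm:depthD} already produces a factorization realizing the depth, the real content of this theorem is the additional assertion that the same factorization is length-additive, i.e. each intermediate right-multiplication step is a step \emph{down} in Bruhat (in fact in right weak) order by exactly $\ell_S(t_r)$. So I would first recall explicitly which reflection $t_r$ the type D depth algorithm selects at each stage, and then verify the length bookkeeping for that choice.

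The key steps, in order. First, recall the combinatorial criterion (from Section~\ref{sec:typeDalg}) governing the choice of $t_r$: it is the reflection that either moves the largest displaced positive entry back toward its correct slot, or flips/repairs a negative entry, chosen so as to peel off exactly $\dep(t_r)$ from the depth formula \eqref{eq:d_D}. Second, for a reflection $t = t_{\beta} \in T$ and $w \in D_n$, use the standard fact that $\ell_S(wt) = \ell_S(w) - \ell_S(t)$ if and only if $w > wt$ in Bruhat order \emph{and} the Bruhat covers from $wt$ up to $w$ along the chain through $t$ are "aligned," equivalently iff $w(t\text{-inversion set})$ behaves monotonically; concretely, for type B/D one checks this via the description of $\ell_S$ in terms of inversions and negative-sum statistics (\cite[\S 8.1--8.2]{BBbook}): $\ell_S(w) = \inv(w) + \text{(negative contributions)}$. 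Third, and this is the heart, show that for the particular $t_r$ chosen by the algorithm, multiplying on the right by $t_r$ decreases this length statistic by precisely $\ell_S(t_r)$ --- i.e. the move removes exactly $\ell_S(t_r)$ inversions (counted with the type D sign conventions) and introduces none. This is a direct, if somewhat case-laden, verification: the entries being swapped/flipped are extremal in the relevant sense (largest exceedance, or a negative entry of largest absolute value within its block), and extremality is exactly what guarantees that no new inversions are created. Fourth, handle the base case (reflections themselves, where $\ell_S(t_\beta) = 2\dep(\beta) - 1$ by \eqref{eq:depthreflection}, and the claim is the trivial one-term factorization) and assemble the induction, noting that $wt_r \in D_n$ since $t_r$ is a reflection in $D_n$.

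I expect the main obstacle to be the length-additivity verification in step three, and in particular the interplay with the type D sign convention on inversions, which is subtler than in type B: the reflections $t_{\bar i j}$ and $t_{ij}$ occurring in $D_n$ pair up the generator $s_0^D$-style moves, and a naive "largest exceedance" choice that works for Bruhat-decreasing might not be length-\emph{additive} without further care about which of the two negative entries in a B-decomposable type D block is repaired first. The natural way to control this is to prove a small lemma isolating the precise extremality hypothesis on $t_r$ (something like: if $t_{\beta}$ is the reflection selected by the algorithm then every entry "between" the two positions swapped by $t_\beta$ lies outside the interval $[\,wt_\beta(i),\, wt_\beta(j)\,]$), from which both $w > wt_\beta$ and length-additivity follow uniformly, and which can be read off from the algorithm's selection rule. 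A secondary, milder obstacle is simply that one must confirm the algorithm underlying Theorem~\ref{thm:depthD} already has the form "peel off one reflection by right multiplication" with no left-multiplication steps --- but the introduction asserts exactly this for the authors' algorithms, so it should be immediate from Section~\ref{sec:typeDalg}. Given all the pieces, the proof is then a straightforward induction on $\ell_S(w)$ (or on $\dep(w)$), terminating at the identity.
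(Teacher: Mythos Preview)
Your plan is correct and matches the paper's approach essentially point for point: the paper proves Theorem~\ref{thm:typeDred} via Theorem~\ref{Dreduced}, which takes the algorithm's factorization and verifies $\ell_S(wt)=\ell_S(w)-\ell_S(t)$ at each step using Lemma~\ref{lem:typeDreflprodred}---exactly the ``extremality lemma'' you describe (condition~(1) for $t_{ij}$ is precisely your interval condition, and condition~(2) handles $t_{\bar{i}j}$). Your anticipated obstacle regarding the $t_{\bar{i}j}$ case is real but mild: the algorithm's choice of the two negative entries of \emph{largest} absolute value ensures $|w(k)|<\min\{|w(i)|,|w(j)|\}$ for all intermediate $k$, which is exactly what Lemma~\ref{lem:typeDreflprodred}(2) requires.
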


\subsection{Coincidences of depth, length, and reflection length}
The proofs of Theorems~\ref{thm:typeBred} and~\ref{thm:typeDred} are distinct, each using the specific combinatorial realization of these groups described above.  However, using these theorems, we can uniformly prove results on the coincidence of depth, length, and reflection length.

Following Fan~\cite{Fan}, we say that $w\in W$ is {\bf short-braid-avoiding} if there does not exist a consecutive subexpression $s_i s_j s_i$, where $s_i, s_j\in S$, in any reduced expression for $w$.

\begin{thm}
Let $w$ be an element of $S_n$, $B_n$, or $D_n$.  Then $\dep(w)=\ell_S(w)$ if and only if $w$ is short-braid-avoiding.
\end{thm}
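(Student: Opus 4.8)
The plan is to prove both implications using the two key tools already available: the inequality $\ell_T(w) \le \frac{\ell_T(w)+\ell_S(w)}{2} \le \dep(w) \le \ell_S(w)$ from the introduction, and Theorems~\ref{thm:typeBred} and~\ref{thm:typeDred} (with the symmetric group handled by viewing $S_n\subset B_n$), which guarantee a reduced factorization $w=t_1\cdots t_r$ with $\dep(w)=\sum_i \dep(t_i)$ and $\ell_S(w)=\sum_i \ell_S(t_i)$.

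For the direction ``short-braid-avoiding implies $\dep(w)=\ell_S(w)$'', I would recall Fan's theorem~\cite{Fan} (extended to type B by Stembridge~\cite{St2}) that in these types short-braid-avoiding is equivalent to being fully commutative, meaning all reduced expressions are connected by commutation moves alone. For a fully commutative $w$, one knows that the set of reflections $\{t : \ell_S(wt) = \ell_S(w)-1, \ t\in T\}$ used in any reduced factorization into reflections can be taken to consist of simple reflections: more precisely, I would argue that if $w$ is short-braid-avoiding and $s\in S$ is any descent, then $ws$ is again short-braid-avoiding, so by induction on $\ell_S(w)$ we get a reduced factorization $w = s_{i_1}\cdots s_{i_r}$ into \emph{simple} reflections with $r=\ell_S(w)$; since each simple reflection has depth $1$, this gives $\dep(w)\le r = \ell_S(w)$, and combined with $\dep(w)\ge \ell_S(w)$ only when... — wait, we need the reverse, so actually this direct argument shows $\dep(w) \le \ell_S(w)$, which is automatic; the real content is the other inequality. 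So instead, for this direction I would use: any factorization realizing the depth has $\sum \dep(t_i) = \dep(w)$, and each $\dep(t_i)\ge \frac{\ell_S(t_i)+1}{2}$, so in general $\dep(w) \ge \frac{\ell_S(w)+r}{2}$ where $r$ is the number of reflections, hence $\dep(w)=\ell_S(w)$ forces all $t_i$ simple. This shows instead that the equality direction needs the structural input. I would therefore reorganize: prove ``$w$ short-braid-avoiding $\Rightarrow \dep(w)=\ell_S(w)$'' by observing that for fully commutative $w$ one has a reduced expression in simple generators which is itself a reduced factorization into reflections of total depth $\ell_S(w)$, so $\dep(w)\le\ell_S(w)$, and since always $\dep(w)\le\ell_S(w)$ this is vacuous — the true claim must be proved by showing $\dep(w)\ge\ell_S(w)$, which I would do by a length/depth comparison argument along a depth-realizing path, using that removing a non-simple reflection in a short-braid-avoiding element would create a short braid; I would need to make this precise, likely via the characterization of fully commutative elements as those whose heaps are ``layered.''

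For the converse, ``$\dep(w)=\ell_S(w)$ implies short-braid-avoiding'', I would use Theorems~\ref{thm:typeBred}/\ref{thm:typeDred} directly: take the reduced factorization $w=t_1\cdots t_r$ with $\dep(w)=\sum\dep(t_i)$ and $\ell_S(w)=\sum\ell_S(t_i)$. Then $\ell_S(w)=\dep(w)=\sum_i\dep(t_i)=\sum_i\frac{\ell_S(t_i)+1}{2}=\frac{\ell_S(w)+r}{2}$, forcing $\ell_S(w)=r$, hence every $\ell_S(t_i)=1$, i.e.\ every $t_i$ is a simple reflection. Thus $w=s_{i_1}\cdots s_{i_r}$ with $r=\ell_S(w)$, so $w$ admits a reduced expression into simple generators, which is no information — but combined with the \emph{reducedness} as a factorization into reflections and a suitable exchange argument, I would derive that no reduced expression of $w$ contains $s_is_js_i$. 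The cleanest route is: if $w$ had a reduced word containing a short braid $s_is_js_i$ (with $m(s_i,s_j)\ge 3$), then by a standard argument $w$ has reflection length $\le \ell_S(w)-1$, which would give $\dep(w)\ge\frac{\ell_T(w)+\ell_S(w)}{2}$ only a lower bound, not the needed contradiction. So the contradiction instead comes from: a short braid lets one find a reflection $t$ with $\ell_S(wt)<\ell_S(w)-1$ while $\dep(t)=1$ is impossible, or rather $\ell_S(wt)=\ell_S(w)-3$ with $t$ a reflection of depth $\le 2$, improving on the simple-reflection path and contradicting $\dep(w)=\ell_S(w)$.

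I expect the main obstacle to be making the short-braid/fully-commutative structure interact cleanly with depth-realizing paths: the easy algebra ($\dep=\ell_S \Leftrightarrow$ all reflections in an optimal reduced factorization are simple) is immediate from \eqref{eq:depthreflection} and Theorems~\ref{thm:typeBred}/\ref{thm:typeDred}, but translating ``all reflections simple in \emph{some} optimal reduced factorization'' into the statement ``no reduced word contains a short braid'' requires the equivalence, in types $A$, $B$, $D$, between fully commutative elements and those whose ``reduced-factorization-into-reflections'' can only use simple reflections — this is where I would invoke Fan~\cite{Fan}, Stembridge~\cite{St2}, and Tenner~\cite{Tenner1}, and where a careful case analysis of the $m(s_i,s_j)=3,4$ situations in $B_n$ and $D_n$ (the $s_0^B s_1 s_0^B s_1$ and $s_0^D s_2 s_0^D$ braids) is needed. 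I would structure the final proof so that the algebraic squeeze using \eqref{eq:depthreflection} does the heavy lifting and the combinatorial input is isolated to a single lemma identifying short-braid-avoiding elements with those having a reduced factorization into reflections using only simple reflections.
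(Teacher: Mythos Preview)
Your proposal identifies the correct tools---Theorems~\ref{thm:typeBred} and~\ref{thm:typeDred} together with equation~\eqref{eq:depthreflection}---and you correctly extract the algebraic squeeze: in a reduced factorization $w=t_1\cdots t_r$ realizing depth, $\dep(w)=\sum\frac{\ell_S(t_i)+1}{2}=\frac{\ell_S(w)+r}{2}$, so $\dep(w)=\ell_S(w)$ forces every $t_i$ to be simple. But you then wander into unnecessary machinery (fully commutative classification, heaps, Fan/Stembridge structure theory) because you are missing two short arguments that close both directions directly.

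For ``not short-braid-avoiding $\Rightarrow\dep(w)<\ell_S(w)$'': if $w=u\,(s_is_js_i)\,v$ is reduced, then by subadditivity of depth,
\[
\dep(w)\le\dep(u)+\dep(s_is_js_i)+\dep(v)\le\ell_S(u)+2+\ell_S(v)=\ell_S(w)-1.
\]
Your suggestion of finding a single reflection $t$ with $\ell_S(wt)=\ell_S(w)-3$ and $\dep(t)\le 2$ does not work as stated: the reflection $s_is_js_i$ sits in the \emph{middle} of the word, and right-multiplication by it does not drop length by $3$ unless $v=e$; to cancel it you would need $t=v^{-1}s_is_js_iv$, whose depth you cannot bound. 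The fix is to use the three-factor decomposition $u\cdot(s_is_js_i)\cdot v$ and subadditivity, not a single right multiplication.

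For ``$\dep(w)<\ell_S(w)\Rightarrow$ not short-braid-avoiding'' (given that depth is realized by a reduced factorization): your algebra shows some $t_i$ in the depth-realizing reduced factorization has $\ell_S(t_i)>1$. Now use that every reflection has a \emph{palindromic} reduced expression $t_i=s_1\cdots s_{j-1}s_js_{j-1}\cdots s_1$. Because $w=t_1\cdots t_r$ is a reduced factorization, substituting this expression for $t_i$ yields a reduced word for $w$ containing the consecutive substring $s_{j-1}s_js_{j-1}$, which is the short braid. Your intuition ``a non-simple reflection in a short-braid-avoiding element would create a short braid'' is exactly right; the palindromic form of reflections is what makes it precise, and no appeal to heaps or the fully commutative classification is needed. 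The ``single lemma'' you propose at the end (short-braid-avoiding $\Leftrightarrow$ some reduced factorization into reflections uses only simple reflections) is vacuous as stated, since any reduced expression is such a factorization; the content lies entirely in the two observations above.
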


Since in $B_n$, the short-braid-avoiding elements are precisely the top-and-bottom fully commutative elements defined by Stembridge~\cite[\S 5]{St2}, we confirm a conjecture of Petersen and Tenner~\cite[\S 5]{PT}.

Let $W$ be any Coxeter group.  An element $w \in W$ is called {\bf boolean}
if the principal order ideal of $w$ in $W$, $B(w):=\{x \in W \mid x\leq w\}$, is a boolean poset, where $\leq$ refers to the strong Bruhat order. Recall that a poset is boolean if it is isomorphic to the set of subsets of $[k]$ ordered by inclusion for some $k$.

\begin{thm}
Let $W$ be any Coxeter group and $w\in W$.  Then $\dep(w)=\ell_T(w)$ if and only if $w$ is boolean.
\end{thm}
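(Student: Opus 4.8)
The plan is to prove both directions using the general inequality $\ell_T(w)\leq\dep(w)\leq\ell_S(w)$ from the introduction together with the reduced-factorization property of depth (Theorems~\ref{thm:typeBred} and~\ref{thm:typeDred}), but in fact the cleanest route is to relate the condition $\dep(w)=\ell_T(w)$ directly to Tenner's characterization of boolean elements via pattern avoidance and via the structure of the Bruhat ideal. First I would recall Tenner's theorem~\cite{Tenner1}: an element $w$ of a Coxeter group is boolean if and only if it is ``short enough'' in the precise sense that no reflection $t$ appears with multiplicity $\geq 2$ along any path, equivalently $w$ has a reduced expression in which every simple generator occurs at most... — more usefully, Tenner shows $w$ is boolean iff $\ell_S(w)$ equals the number of distinct reflections below $w$ in Bruhat order, and iff $w$ avoids a short list of patterns. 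I would then show that $\dep(w)=\ell_T(w)$ is equivalent to the existence of a factorization $w=t_{\beta_1}\cdots t_{\beta_r}$ with $r=\ell_T(w)$ in which each $\beta_i$ is a \emph{simple} root (so each $\dep(\beta_i)=1$), since $\ell_T(w)\leq\dep(w)\leq\sum_i\dep(\beta_i)$ forces every $\dep(\beta_i)=1$ when equality holds, and conversely such a factorization witnesses $\dep(w)\leq\ell_T(w)$.

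The heart of the argument is then: $w$ is a product of $\ell_T(w)$ \emph{simple} reflections if and only if $w$ is boolean. For the forward direction, if $w=s_{i_1}\cdots s_{i_r}$ with $r=\ell_T(w)$, I would argue that this expression must also be reduced (since $\ell_S(w)\geq\dep(w)\geq\ell_T(w)=r$ forces $\ell_S(w)=r$), and moreover the $s_{i_j}$ must be pairwise distinct: a repeated simple generator would let one shorten the reflection word, contradicting minimality of $r=\ell_T(w)$. A reduced word using $k$ distinct simple generators each exactly once is a standard Coxeter element of a parabolic subgroup, and its Bruhat interval is readily seen to be boolean of rank $k$. Conversely, if $B(w)$ is boolean of rank $k$, then $\ell_S(w)=k$ and there are exactly $k$ reflections below $w$; I would invoke the characterization (in~\cite{Tenner1}) that the atoms of the boolean lattice $B(w)$ are exactly $k$ distinct reflections $t_1,\ldots,t_k$, that $w=t_1\cdots t_k$ in an appropriate order, and that—because $B(w)$ is boolean—each atom $t_j\leq w$ must itself be a simple reflection (an atom of rank $1$ that is a reflection is a simple reflection precisely when $w$ is boolean; here the boolean hypothesis on the whole interval forces every reflection below $w$ to have reflection length visible as a single cover, hence length $1$). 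This yields a factorization of $w$ into $k=\ell_T(w)$ simple reflections, so $\dep(w)=\ell_T(w)$.

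The main obstacle I anticipate is the claim that when $B(w)$ is boolean, every reflection below $w$ is actually a \emph{simple} reflection; this is the nontrivial input and is exactly where Tenner's detailed structural results on boolean elements are needed (it is not true for arbitrary $w$ that reflections below $w$ are simple). I would handle this by citing~\cite{Tenner1} for the statement that $w$ is boolean iff $w$ has a reduced word that is a subword of $s_1 s_2\cdots s_{n-1}\cdot(\text{the first generator})$-type ``staircase'' word—more precisely, iff $w$ avoids the relevant short patterns, which in types $A$, $B$, $D$ forces the support of $w$ to be used without repetition and the reflections below $w$ to be among the simple ones. A secondary, more routine point is checking the converse bookkeeping: that a product of $k$ distinct simple reflections has reflection length exactly $k$ (not less), which follows since its length is $k$ and $\ell_T\geq\ell_S$ is false in general but here $\ell_T(w)\geq k$ because the interval has rank $k$ and $\ell_T(w)\le\ell_S(w)=k$; combining gives equality. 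Once these pieces are in place, the equivalence $\dep(w)=\ell_T(w)\iff w\text{ boolean}$ follows by chaining the two equivalences established above.
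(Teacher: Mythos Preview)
Your overall strategy matches the paper's: both reduce to the chain $\dep(w)=\ell_T(w)\Leftrightarrow \ell_T(w)=\ell_S(w)\Leftrightarrow w$ has a reduced expression with no repeated simple generator $\Leftrightarrow w$ is boolean, the last equivalence being Tenner's theorem~\cite{Tenner1}. Your forward direction is essentially correct, though the assertion ``a repeated simple generator would let one shorten the reflection word'' deserves one line of justification: if $s$ repeats in the reduced word, write $w=u\cdot svs\cdot x$ and use $\ell_T(svs)=\ell_T(v)\le\ell_S(v)$ to obtain $\ell_T(w)\le\ell_S(w)-2$, contradicting $\ell_T(w)=\ell_S(w)$.

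There is, however, a genuine gap in your converse. Given that $w$ is boolean, you correctly extract (via Tenner) a reduced expression $w=s_{j_1}\cdots s_{j_k}$ with $k=\ell_S(w)$ and the $s_{j_i}$ pairwise distinct; this gives $\dep(w)\le k$. But to conclude $\dep(w)=\ell_T(w)$ you still need $\ell_T(w)=k$, and your justification---``$\ell_T(w)\ge k$ because the interval has rank $k$''---is invalid: the Bruhat interval $[e,w]$ always has rank $\ell_S(w)$, and this places no lower bound on $\ell_T(w)$ (for instance, $w_0\in S_3$ has Bruhat rank $3$ yet $\ell_T(w_0)=1$). The paper fills this gap with Dyer's theorem~\cite{Dyer}, which identifies $\ell_T(w)$ as the minimum number of letters one must delete from a reduced expression of $w$ to obtain the identity; since any nonempty product of \emph{distinct} simple reflections is nontrivial, all $k$ letters must be deleted, whence $\ell_T(w)=k$. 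The ``main obstacle'' you flag (that every reflection below a boolean element is simple) is true but is a corollary of this argument, not a substitute for it.
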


These lead to the following enumerative results.

\begin{cor}\label{num dp=l}\
\begin{enumerate}
\item The number of elements $w \in B_n$ satisfying $\dep(w)=\ell_S(w)$ is the Catalan number ${\rm C}_{n+1}$.
\item The number of elements $w \in D_n$ satisfying $\dep(w)=\ell_S(w)$ is $\frac{1}{2}(n+3) {\rm C}_n-1$.
\end{enumerate}
\end{cor}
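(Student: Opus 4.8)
The strategy is to reduce, via the already-established characterization that $\dep(w)=\ell_S(w)$ if and only if $w$ is short-braid-avoiding, to the purely combinatorial problem of counting short-braid-avoiding elements of $B_n$ and of $D_n$, and then to treat the two families separately.

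Type D should follow from the literature. Since the Coxeter graph of $D_n$ is simply laced, every pair of generators satisfies $m(s,t)\in\{2,3\}$, so for elements of $D_n$ the property of being short-braid-avoiding coincides with that of being fully commutative in the sense of Stembridge. Hence the desired count is the number of fully commutative elements of $D_n$, which Stembridge computed to be $\tfrac12(n+3)\,{\rm C}_n-1$; this is exactly part (2). I would include the small-case checks $n=2$ (the formula gives $4=|D_2|$, and all of $D_2\cong A_1\times A_1$ is fully commutative) and $n=3$ (the formula gives $14={\rm C}_4$, matching $D_3\cong A_3$) to pin down the normalization.

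Type B is the crux, because $m(s_0^B,s_1)=4$ and so being short-braid-avoiding is strictly stronger than being fully commutative: one must additionally forbid reduced words containing $s_0^Bs_1s_0^B$ or $s_1s_0^Bs_1$. As recalled earlier in the paper, the short-braid-avoiding elements of $B_n$ are precisely Stembridge's top-and-bottom fully commutative elements, and I would invoke (or re-derive) the fact that there are exactly ${\rm C}_{n+1}$ of them, which is part (1). For a self-contained derivation I would first extract, from our type B algorithm and its analysis, an explicit list of forbidden signed patterns characterizing the short-braid-avoiding signed permutations --- essentially the ordinary $321$ obstruction on the absolute values together with the sign obstructions coming from $s_1s_0^Bs_1$ and from two ``badly ordered'' negative entries --- and then construct a length-preserving bijection between these admissible signed permutations and Dyck paths of semilength $n+1$ (equivalently, $321$-avoiding permutations of $[n+1]$). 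The small cases $n=1$ (giving $2={\rm C}_2$) and $n=2$ (giving $5={\rm C}_3$, obtained from $|B_2|=8$ by discarding $s_0^Bs_1s_0^Bs_1$, $s_0^Bs_1s_0^B$, and $s_1s_0^Bs_1$) fix the indexing.

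The main obstacle is the type B enumeration: pinning down the pattern-avoidance description of short-braid-avoiding signed permutations exactly, and then either locating the enumeration of top-and-bottom fully commutative elements of $B_n$ in the literature in the precise form needed, or else carrying out the bijection with Dyck paths of semilength $n+1$ while carefully tracking how the sign obstructions interact with the ordinary $321$ obstruction. By contrast, the type D count is an essentially immediate consequence of Stembridge's enumeration of fully commutative elements.
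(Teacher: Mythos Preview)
Your proposal is correct and follows essentially the same approach as the paper: reduce via Corollary~\ref{cor:dep-length} to counting short-braid-avoiding elements, then for type~$D$ identify these with the fully commutative elements (since $D_n$ is simply laced) and cite Stembridge's enumeration, while for type~$B$ identify them with Stembridge's top-and-bottom fully commutative elements (Proposition~\ref{f.c is sba in B}) and cite his enumeration. The only difference is that the ``main obstacle'' you flag in type~$B$ is not an obstacle at all: the count ${\rm C}_{n+1}$ of top-and-bottom fully commutative elements in $B_n$ appears explicitly as Proposition~5.9(c) of~\cite{St2}, so no self-contained bijection with Dyck paths is needed.
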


\begin{cor}\label{thm:enumeration}\
\begin{enumerate}
\item The number of elements $w \in B_n$ satisfying $\ell_{T}(w)=\dep(w)=\ell_S(w)$ is the Fibonacci number $F_{2n+1}$.
\item The number of elements $w \in B_n$ satisfying $\ell_{T}(w)=\dep(w)=\ell_S(w)=k$ is
$$\sumlim_{i=1}^k{{{n+1-i}\choose {k+1-i}}{{k-1}\choose {i-1}}},$$
where for $k=0$ the sum is defined to be 1.
\end{enumerate}
\end{cor}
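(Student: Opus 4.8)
The plan is to show that the elements counted in the corollary are exactly the \textbf{boolean} elements of $B_n$, and then to enumerate those. For the reduction, since $\ell_T(w)\le\dep(w)\le\ell_S(w)$ always holds, the condition $\ell_T(w)=\dep(w)=\ell_S(w)$ is equivalent to $\ell_T(w)=\ell_S(w)$, which by the two preceding theorems occurs precisely when $w$ is simultaneously short-braid-avoiding and boolean. I would then observe that a boolean element is automatically short-braid-avoiding: if $s_is_js_i\le w$ in Bruhat order for some $s_i,s_j$ with $m(s_i,s_j)=3$, then $[e,s_is_js_i]$ is the principal order ideal of $s_is_js_i$ inside the Boolean lattice $[e,w]$, hence is itself Boolean; but $[e,s_is_js_i]$ has rank $3$ with only two elements of rank $2$ rather than $\binom{3}{2}=3$, a contradiction. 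So the set in question is exactly $\{w\in B_n\mid w\text{ is boolean}\}$.

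For the structure of boolean elements I would invoke Tenner's characterization \cite{Tenner1}: $w$ is boolean if and only if some (equivalently, every) reduced word for $w$ has no repeated letter. Since the Coxeter graph of $B_n$ is the path on $s_0,s_1,\dots,s_{n-1}$, a reduced word with distinct letters admits only commutation moves (a braid move would repeat a letter), so such a $w$ is determined by its support $J\subseteq\{s_0,\dots,s_{n-1}\}$ together with, for each maximal run $I$ of consecutive vertices in $J$, an arbitrary orientation of the $|I|-1$ edges of $I$; conversely every such choice arises, since an orientation of the edges of a path is acyclic and all its linear extensions represent the same group element (any two differ by transpositions of incomparable, hence non-adjacent and therefore commuting, generators), and distinct orientations give distinct elements because the relative order of $s_i$ and $s_{i+1}$ is an invariant of $w$ in this setting. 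As generators from different runs are non-adjacent and commute, $w$ factors as a commuting product over its runs, with each factor a full-support boolean element of the corresponding parabolic; hence the number of boolean elements with support $J$ equals $\prod_I 2^{|I|-1}$, and each has length $|J|$.

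For the enumeration, grouping the boolean elements of length $k$ by their number $r$ of maximal runs, and using that a $k$-subset of an $n$-vertex path forming $r$ runs can be chosen in $\binom{k-1}{r-1}\binom{n-k+1}{r}$ ways with each choice contributing $2^{k-r}$ elements, I obtain
\[
\#\{w\in B_n\mid \ell_T(w)=\dep(w)=\ell_S(w)=k\}=\sum_{r=1}^{k}\binom{k-1}{r-1}\binom{n-k+1}{r}2^{k-r},
\]
and a routine binomial manipulation (expanding $2^{k-r}$ and applying Vandermonde, or re-counting by a different parameter) rewrites this as $\sum_{i=1}^{k}\binom{n+1-i}{k+1-i}\binom{k-1}{i-1}$, proving part (2). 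For part (1) I would either sum over $k$, or, more cleanly, set $f(n)=\sum_{J}\prod_I 2^{|I|-1}$ (with the conventions $f(0)=1$ and $f(-1)=1$), condition on the run containing $s_{n-1}$ to get $f(n)=f(n-1)+\sum_{j=1}^{n}2^{j-1}f(n-j-1)$, telescope this to $f(n)=3f(n-1)-f(n-2)$, and conclude $f(n)=F_{2n+1}$ from $f(0)=1$, $f(1)=2$.

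The only substantive step is the structural result in the second paragraph: that the boolean elements of $B_n$ are precisely those assembled, run by run along the Coxeter path, from independent edge-orientations, with distinct orientations yielding distinct elements. This rests on the path (tree) shape of the Coxeter graph together with the fact that distinct-letter reduced words never invoke a braid relation, so that only commutations are available. The reduction is forced by the chain $\ell_T\le\dep\le\ell_S$, and the closing identity and recurrence are standard bookkeeping.
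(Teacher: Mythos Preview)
Your proposal is correct. The reduction to boolean elements matches the paper's; the paper, however, does not carry out the enumeration but simply cites Tenner's \cite[Corollaries~7.5 and~7.6]{Tenner1}, whereas you give a self-contained count by analyzing commutation classes of distinct-letter reduced words along the path-shaped Coxeter graph of $B_n$. Your structural claim---that boolean elements with support $J$ are in bijection with orientations of the edges within each maximal run of $J$---is exactly the kind of argument underlying Tenner's enumeration, and it goes through in type $B$ because bond labels are irrelevant once braid moves are excluded. One small redundancy: you need not prove that boolean implies short-braid-avoiding, since $\ell_T(w)\le\dep(w)\le\ell_S(w)$ already shows $\ell_T(w)=\dep(w)=\ell_S(w)$ is equivalent to $\ell_T(w)=\ell_S(w)$, which the paper characterizes directly as boolean via Dyer's result and Tenner's Theorem~7.3; but the extra step does no harm.
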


\begin{cor}\label{thm:enumerationD}\
\begin{enumerate}
\item For $n\geq 4$, the number of elements $w \in D_n$ satisfying $\ell_{T}(w)=\dep(w)=\ell_S(w)$ is 
$$\frac{13-4b}{a^2(a - b)} a^n + \frac{13 - 4a}{b^2(b - a)}b^n,$$
where $a=(3+ \sqrt 5)/2$ and $b=(3- \sqrt 5)/2$.
\item For $n > 1$, the number of elements $w \in D_n$ satisfying $\ell_{T}(w)=\dep(w)=\ell_S(w)=k$ is
$$\quad \quad L^D(n, k) = L(n, k) + 2L(n, k - 1) - L(n - 2, k- 1) - L(n - 2, k - 2),$$	
where $L(n,k)=\sumlim_{i=1}^k {n-i \choose k+1-i}{k-1 \choose i-1}$, $L(n,k)$ is 0 for any $(n,k)$ on which it is undefined, and $L^D(1,0) = 1$ and $L^D(1,1) = 0$.
\end{enumerate}
\end{cor}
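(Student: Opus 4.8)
\smallskip
\noindent\emph{Proof strategy.} The plan is to recognize the elements being counted as the \emph{boolean} elements of $D_n$ and to enumerate them by Coxeter length. Because $\ell_T(w)\le\dep(w)\le\ell_S(w)$ for every $w$, an element satisfies $\ell_T(w)=\dep(w)=\ell_S(w)$ if and only if it is simultaneously boolean (by the theorem characterizing $\dep(w)=\ell_T(w)$) and short-braid-avoiding (by the theorem characterizing $\dep(w)=\ell_S(w)$); but a boolean element is automatically short-braid-avoiding, since by~\cite{Tenner1} an element is boolean precisely when it has a reduced expression in which no simple generator is repeated, and such an expression contains no factor $s_is_js_i$. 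Thus the numbers in Corollary~\ref{thm:enumerationD} are the numbers of boolean elements of $D_n$, graded by length in part~(2). I would then use the standard description of boolean elements: such a $w$ is determined by the set $A$ of simple generators appearing in a reduced expression (equivalently, in any reduced expression) together with an acyclic orientation of the induced Coxeter subgraph $\Gamma[A]$, the orientation recording the relative order of non-commuting generators; here $\ell_S(w)=|A|$, and, since the Coxeter graphs in types $B$ and $D$ are trees, $\Gamma[A]$ is a forest and has $2^{|A|-c(A)}$ acyclic orientations, where $c(A)$ is its number of connected components. Two consequences are worth recording: this count is multiplicative over the connected components of $\Gamma$, and, since it depends only on which pairs of generators commute, Corollary~\ref{thm:enumeration}(2) computes the number $L(m+1,k)$ of boolean elements of length $k$ in any Coxeter system whose graph is a path on $m$ vertices (in particular $B_m$).

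\noindent For $n\ge 3$ the Coxeter graph $\Gamma_{D_n}$ is the path $s_2-s_3-\cdots-s_{n-1}$ together with the two pendant vertices $s_0^D$ and $s_1$, both attached to $s_2$. I would compute $\Phi_{D_n}(x):=\sum_k L^D(n,k)x^k$ by ``deletion--contraction'' at the leaf $s_1$. A boolean element avoiding $s_1$ is exactly a boolean element of the parabolic subgroup on $\{s_0^D,s_2,\ldots,s_{n-1}\}$, whose Coxeter graph is a path on $n-1$ vertices, so these contribute $\sum_k L(n,k)x^k$. Every boolean element that uses $s_1$ arises from a boolean element $w'$ of that $(n-1)$-vertex path by reinserting the pendant generator $s_1$ into its commutation class; as $s_1$ commutes with all generators except $s_2$, there are two ways to do this when $s_2\in\operatorname{supp}(w')$ and one way otherwise. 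Deleting $s_2$ from the $(n-1)$-vertex path disconnects it into the isolated vertex $s_0^D$ and the path on $s_3,\ldots,s_{n-1}$ ($n-3$ vertices), so the boolean elements of that path avoiding $s_2$ have generating function $(1+x)\sum_k L(n-2,k)x^k$; combining the pieces gives
$$\Phi_{D_n}(x)=(1+2x)\sum_k L(n,k)x^k-x(1+x)\sum_k L(n-2,k)x^k .$$
Reading off the coefficient of $x^k$ yields $L^D(n,k)=L(n,k)+2L(n,k-1)-L(n-2,k-1)-L(n-2,k-2)$. Finally I would treat the degenerate cases directly: $n=1$ gives $L^D(1,0)=1$ and $L^D(1,1)=0$; $n=2$ (where $\Gamma_{D_2}$ is two isolated vertices) and $n=3$ (where it is a path on three vertices) must be checked by hand, and one verifies that the boundary conventions for $L(n,k)$, in particular $L(n,0)=1$, are exactly those making the displayed formula reproduce these values. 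I expect the main obstacle to be the ``two-versus-one'' reinsertion count when both $s_1$ and $s_2$ lie in the support: here the component of $\Gamma_{D_n}$ containing $s_2$ is a ``broom'' rather than a path, and one must check that it has exactly twice as many acyclic orientations as the path obtained by identifying $s_0^D$ and $s_1$; routing the computation through the generating function above is the cleanest way to keep the bookkeeping under control.

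\noindent Part~(1) then follows from part~(2) by summing over $k$, i.e.\ by evaluating the identity above at $x=1$. By Corollary~\ref{thm:enumeration}(1) the total number of boolean elements of $B_m$ is $F_{2m+1}$, so $\sum_k L(n,k)=F_{2n-1}$ and $\sum_k L(n-2,k)=F_{2n-5}$, whence
$$\sum_k L^D(n,k)=3F_{2n-1}-2F_{2n-5}.$$
To put this in the stated closed form I would apply Binet's formula and substitute $a=\varphi^2$, $b=\psi^2$ with $\varphi,\psi=(1\pm\sqrt 5)/2$, so that $a+b=3$, $ab=1$, and $a-b=\sqrt 5$; a short computation in $\mathbb{Q}(\sqrt 5)$ then turns $3F_{2n-1}-2F_{2n-5}$ into $\dfrac{13-4b}{a^2(a-b)}a^n+\dfrac{13-4a}{b^2(b-a)}b^n$ (equivalently, this is the unique sequence satisfying $c_n=3c_{n-1}-c_{n-2}$ with the correct initial values). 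This last step is routine.
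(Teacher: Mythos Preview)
Your argument is correct. The identification of the elements in question with the boolean elements of $D_n$ matches the paper's Section~5.2, and your deletion--contraction computation at the leaf $s_1$ is sound: the key point that the number of boolean elements with given support depends only on the underlying unlabeled Coxeter graph (via acyclic orientations of a forest) is exactly what lets you import the type~$B$ count $L(m+1,k)$ for any path on $m$ vertices, and the bookkeeping for reinserting $s_1$ according to whether $s_2$ is in the support is handled correctly. The summation step and the Binet manipulation in part~(1) are routine, as you say, and your remark that the boundary convention $L(n,0)=1$ (and $L(0,0)=1$) is what makes the formula valid down to $n=2$ is the right way to settle the small cases.

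The paper, by contrast, does not prove the enumeration at all: after establishing that $\ell_T(w)=\dep(w)=\ell_S(w)$ characterizes boolean elements, it simply cites Tenner~\cite[Corollaries~7.5 and~7.6]{Tenner1} for the counts in both $B_n$ and $D_n$. So your approach is genuinely different in that it supplies a self-contained derivation of the $D_n$ numbers from the $B_n$ numbers via the graph structure, rather than quoting the literature. The paper's route is more economical; yours has the advantage of explaining \emph{why} the $D_n$ formula has precisely the shape $L(n,k)+2L(n,k-1)-L(n-2,k-1)-L(n-2,k-2)$, with the four terms visibly coming from the four cases in the leaf deletion.
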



\section{Realizing depth in type B}
\label{sec:typeBalg}

\subsection{Reflections, length, and depth in type B}

For the reader's convenience we state here the basic facts on reflections, length, and depth for the Coxeter system $(B_n,S_B)$.  The facts on reflections and their lengths can be found in~\cite[\S 8.1]{BBbook}.

The set of reflections is given by
$$T^B:=\{t_{ij}, t_{\bar{i}j}   \mid 1 \leq i < j \leq n\} \cup \{t_{\bar{i}i} \mid i \in [n]\},$$
where $t_{ij}=(i,j)(\bar{i},\bar{j})$,   $t_{\bar{i}j}=(\bar{i},j)(i,\bar{j})$, and $t_{\bar{i}i}=(\bar{i},i)$ in cycle notation. In particular there are $n^2$ reflections in $B_n$.  We summarize below the result in window notation of right multiplication by each type of reflection.

\begin{enumerate}
\item The reflections $t_{ij}$.

\noindent Right multiplication of $w$ by  $t_{ij}$ swaps the entry $w(i)$ in location $i$ with the entry $w(j)$ in location $j$ in such a way that each digit moves with its sign. For example $ [\bar{3}, 1, 4, 2] \stackrel{t_{12}}{\rightarrow}  [1, \bar{3}, 4, 2].$

\item The reflections $t_{\bar{i}j}$.

\noindent Right multiplication by $t_{\bar{i}j}$ swaps entry $w(i)$ in location $i$ with entry $w(j)$ in location $j$ and  changes both signs. For example
$ [\bar{3}, 1, 4, 2] \stackrel{t_{\bar{1}2}}{\rightarrow} [\bar{1}, 3, 4, 2].$

\item The reflections $t_{\bar{i}i}$.

\noindent Right multiplication by $t_{\bar{i}i}$ changes the sign of the entry $w(i)$ in location $i$. For example $[3, \bar{1}, 4, 2]
\stackrel{t_{\bar{2}2}}{\rightarrow}  [3, 1, 4, 2]$.
\end{enumerate}

The length of a permutation $w \in B_n$ is measured by counting certain pairs of entries, known as B-inversions. We can divide them into three types. For $w \in B_n$, we have already defined the set ${\rm Neg}(w)$; now we set
\begin{itemize}
\item[] ${\rm Inv}(w):=\{(i,j) \mid 1 \leq i<j \leq n, w(i)>w(j)\},$ and

\item[] ${\rm Nsp}(w):=\{(i,j) \mid 1\leq i<j\leq n, w(i)+w(j)<0\}.$
\end{itemize}

If we let ${\rm inv}(w):=\#{\rm Inv}(w)$ and ${\rm nsp}(w):=\#{\rm Nsp}(w)$, then we have the following  formula for the length
\begin{equation}\label{eq:typeBlength}
\ell_S(w)={\rm inv}(w)+{\rm neg}(w)+{\rm nsp}(w).
\end{equation}

Note that a pair $(i,j)$ may appear in both ${\rm Inv}(w)$ and ${\rm Nsp}(w)$, in which case this pair is counted twice in calculating $\ell_S(w)$.

From Equations (\ref{eq:depthreflection}) and (\ref{eq:typeBlength}) we immediately obtain the depths of the three types of reflections

\begin{lem}\label{lem:typeBdepthreflections}
The depths of reflections in type B are as follows.
$$\dep(t_{ij})=j-i, \quad \dep(t_{\bar{i}j})=i+j-1, \quad and \quad \dep(t_{\bar{i}i})=i.$$
\end{lem}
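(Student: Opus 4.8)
The plan is to invoke Equation~(\ref{eq:depthreflection}), which gives $\dep(t_\beta)=(\ell_S(t_\beta)+1)/2$ for every reflection; thus it suffices to compute the Coxeter length $\ell_S(t)$ of each of the three families of reflections in $T^B$ and then divide. For each reflection I would first write it in window notation, which is exactly the result of applying the reflection to the identity permutation as recorded in the three cases above, and then count the three kinds of $B$-inversions that enter the length formula~(\ref{eq:typeBlength}): $\inv$, $\nneg$, and $\nsp$.

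For $t_{ij}$, the window is $[1,\ldots,i-1,j,i+1,\ldots,j-1,i,j+1,\ldots,n]$, which has no negative entries, so $\nneg(t_{ij})=\nsp(t_{ij})=0$; the inversions come only from the value $j$ in position $i$, which exceeds the $j-i$ values in positions $i+1,\ldots,j$, and from the value $i$ in position $j$, which lies below the $j-1-i$ values in positions $i+1,\ldots,j-1$, giving $\ell_S(t_{ij})=\inv(t_{ij})=2(j-i)-1$. For $t_{\bar{i}i}$, the window is the identity with the sign of the entry in position $i$ reversed, so $\nneg(t_{\bar{i}i})=1$, and for each $a<i$ the pair $(a,i)$ is simultaneously an inversion and a negative-sum pair while no other pair contributes, so $\inv(t_{\bar{i}i})=\nsp(t_{\bar{i}i})=i-1$ and $\ell_S(t_{\bar{i}i})=2i-1$. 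For $t_{\bar{i}j}$, the window is $[1,\ldots,i-1,\bar{j},i+1,\ldots,j-1,\bar{i},j+1,\ldots,n]$, so $\nneg(t_{\bar{i}j})=2$; I would count inversions and negative-sum pairs by splitting the relevant positions into those before $i$, those strictly between $i$ and $j$, and the pair $(i,j)$ itself, obtaining $\inv(t_{\bar{i}j})=i+j-3$ and $\nsp(t_{\bar{i}j})=i+j-2$, hence $\ell_S(t_{\bar{i}j})=2(i+j-1)-1$. Adding $1$ and halving each of the three lengths yields the stated values.

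The argument is a routine bookkeeping exercise, and the only place that demands care is $t_{\bar{i}j}$: one must note that the pair $(i,j)$ together with the pairs $(a,i)$ and $(a,j)$ for $a<i$ are simultaneously inversions and negative-sum pairs, and are therefore counted twice in~(\ref{eq:typeBlength}) as remarked there, whereas among the positions strictly between $i$ and $j$ the pairs $(b,j)$ are inversions but not negative-sum pairs and the pairs $(i,b)$ are negative-sum pairs but not inversions. Keeping this distinction straight is the main (and essentially the only) obstacle.
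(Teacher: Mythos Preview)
Your approach is exactly what the paper intends: it says only that the lemma follows ``immediately'' from Equations~(\ref{eq:depthreflection}) and~(\ref{eq:typeBlength}), and you are supplying the bookkeeping behind that word. Your numerical counts $\inv(t_{\bar{i}j})=i+j-3$ and $\nsp(t_{\bar{i}j})=i+j-2$ are correct, and the resulting depths are right in all three cases.

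There is, however, a slip in your verbal breakdown for $t_{\bar{i}j}$. You assert that the pair $(i,j)$ is simultaneously an inversion and a negative-sum pair. It is not: since $w(i)=-j$ and $w(j)=-i$ with $i<j$, we have $w(i)=-j<-i=w(j)$, so $(i,j)\notin\Inv(t_{\bar{i}j})$, while $w(i)+w(j)=-i-j<0$ does give $(i,j)\in\Nsp(t_{\bar{i}j})$. This is precisely why your two counts differ by $1$; had $(i,j)$ really contributed to both, your stated values would disagree with your own description. The fix is simply to move $(i,j)$ from the ``counted twice'' list to the ``$\nsp$ only'' list; everything else in your case analysis is accurate, and the totals are unaffected.
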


We can also easily determine if $w=vt$ is a reduced factorization.

\begin{lem}\label{lem:typeBreflprodred}
Let $v, w\in B_n$, $t\in T^B$, and $w=vt$.  Then $\ell_S(w)=\ell_S(v)+\ell_S(t)$ if and only if one of the following hold:
\begin{enumerate}
\item $t=t_{ij}$, $w(i)>w(j)$, and for all $k$ with $i<k<j$, $w(i)>w(k)>w(j)$.
\item $t=t_{\bar{i}j}$, $w(i)<0$, $w(j)<0$, for all $k$ with $1\leq k<i$, $w(k)>w(i)$ and $w(k)+w(j)<0$, and for all $k'\neq i$ with $1\leq k'<j$,  $w(k')>w(j)$ and $w(i)+w(k')< 0$.
\item $t=t_{\bar{i}i}$, $w(i)<0$, and for all $k$ with $1\leq k<i$, we have $|w(k)|<|w(i)|$.
\end{enumerate}
\end{lem}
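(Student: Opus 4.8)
The inequality $\ell_S(w)\le\ell_S(v)+\ell_S(t)$ is automatic from subadditivity of $\ell_S$ (it is just $\ell_S\big((vt)t\big)\le\ell_S(vt)+\ell_S(t)$), so the entire task is to determine exactly when $\ell_S(w)=\ell_S(v)+\ell_S(t)$. The plan is to do this one reflection type at a time: set $v=wt$ (equivalent to $w=vt$ since $t^2=1$), use the explicit description of right multiplication by $t$ recorded just before Lemma~\ref{lem:typeBdepthreflections}, and compute $\ell_S(w)-\ell_S(v)$ exactly from the length formula~\eqref{eq:typeBlength}, checking when it attains $\ell_S(t)$. By~\eqref{eq:depthreflection} and Lemma~\ref{lem:typeBdepthreflections} we have $\ell_S(t_{ij})=2(j-i)-1$, $\ell_S(t_{\bar i j})=2(i+j-1)-1$, and $\ell_S(t_{\bar i i})=2i-1$.

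For $t=t_{ij}$: here $v$ arises from $w$ by interchanging the entries in positions $i$ and $j$ (with their signs), so $\nneg$ and $\nsp$ are unchanged (the two entries are merely reassigned to each other's positions), and only $\inv$ changes, and only among the pairs $(i,j)$ and $(i,k),(k,j)$ for $i<k<j$. A direct count gives $\inv(w)-\inv(v)<0$ when $w(i)<w(j)$, and $\inv(w)-\inv(v)=1+2\,\#\{k:i<k<j,\ w(i)>w(k)>w(j)\}$ when $w(i)>w(j)$; since there are exactly $j-i-1$ positions strictly between $i$ and $j$, this equals $2(j-i)-1$ precisely in the situation of condition (1).

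For $t=t_{\bar i i}$: now $v$ differs from $w$ only by a sign change in position $i$, so only $\nneg$ and the pairs $(a,i)$ for $a<i$ and $(i,b)$ for $b>i$ are affected. One checks that $\nneg(w)-\nneg(v)=1$ if $w(i)<0$ and $=-1$ if $w(i)>0$, that each pair $(i,b)$ with $b>i$ contributes nothing, and that when $w(i)<0$ each pair $(a,i)$ with $a<i$ contributes $2$ to the difference if $|w(a)|<|w(i)|$ and $0$ otherwise. Hence $\ell_S(w)-\ell_S(v)<0$ when $w(i)>0$, while when $w(i)<0$ it equals $1+2\,\#\{a<i:|w(a)|<|w(i)|\}\le 2i-1$, with equality exactly as in condition (3).

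The case $t=t_{\bar i j}$, where $v(i)=-w(j)$, $v(j)=-w(i)$ and $v$ agrees with $w$ elsewhere, is the one I expect to need the most care. The affected data are $\nneg$, the pair $(i,j)$, the families $\{(a,i),(a,j)\}$ for $a<i$, $\{(i,k),(k,j)\}$ for $i<k<j$, and $\{(i,b),(j,b)\}$ for $b>j$; for each family one must track membership in both $\Inv$ and $\Nsp$ before and after. It helps to rewrite each difference of indicators $[w(a)>w(p)]-[w(a)<w(p)]$ as $2[w(a)>w(p)]-1$ and each $[w(a)+w(p)<0]-[w(a)+w(p)>0]$ as $2[w(a)+w(p)<0]-1$, which makes all the signs uniform. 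The bookkeeping then yields: $\nneg(w)-\nneg(v)\le 2$, with equality iff $w(i),w(j)<0$; the $(i,j)$-contribution is $\le 1$, with equality iff $w(i)+w(j)<0$; each $\{(i,b),(j,b)\}$ contributes $0$; each $\{(i,k),(k,j)\}$ contributes $\le 2$, with equality iff $w(k)>w(j)$ and $w(i)+w(k)<0$; and each $\{(a,i),(a,j)\}$ contributes $\le 4$, with equality iff $w(a)>w(i)$, $w(a)>w(j)$, $w(a)+w(i)<0$, and $w(a)+w(j)<0$. Adding these bounds gives exactly $2+1+4(i-1)+2(j-i-1)=2(i+j-1)-1=\ell_S(t_{\bar i j})$, so equality holds in the lemma iff every one of these bounds is tight. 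Unwinding that — using that $w(i)+w(j)<0$ is automatic once $w(i),w(j)<0$, that for $a<i$ all four inequalities are required, and that for $i<k'<j$ only $w(k')>w(j)$ and $w(i)+w(k')<0$ survive — reproduces precisely condition (2). The remaining work here is routine but laborious, the only genuine subtlety being the case split on the signs of $w(a),w(i),w(j)$ when evaluating the indicators.
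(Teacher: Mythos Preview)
Your argument is correct, and it is genuinely different from the paper's. The paper proves the lemma by choosing a specific reduced expression for each reflection $t$ (namely $t_{ij}=s_{j-1}\cdots s_i\cdots s_{j-1}$, $t_{\bar i i}=s_{i-1}\cdots s_1 s_0^B s_1\cdots s_{i-1}$, and $t_{\bar i j}=s_{i-1}\cdots s_1\,s_{j-1}\cdots s_2\,s_0^B s_1 s_0^B\,s_2\cdots s_{j-1}\,s_1\cdots s_{i-1}$) and then checking, one simple reflection at a time, that each successive right multiplication decreases $\ell_S$ by exactly $1$; the stated conditions are read off as precisely the requirements for every intermediate step to be a descent. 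You instead compute $\ell_S(w)-\ell_S(wt)$ in one shot from the closed formula $\ell_S=\inv+\nneg+\nsp$, isolating the affected position pairs and bounding each contribution, then matching the sum of the bounds against $\ell_S(t)$.

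Each approach has its virtues. The paper's step-by-step argument makes transparent \emph{why} the conditions look the way they do (each clause corresponds to a specific simple reflection in the word for $t$), and it immediately yields the finer statement that $w=vt$ is reduced as a word in $S$, which is what is actually used in the proof of Theorem~\ref{reduced}. Your approach avoids tracking a long sequence of intermediate permutations and replaces it with a single static count; the telescoping trick $[P]-[\neg P]=2[P]-1$ makes the bookkeeping for $t_{\bar i j}$ clean (indeed your claimed per-family bounds of $0$, $1$, $2$, $2$, and $4$ check out exactly, summing to $2(i+j-1)-1$). The only place where you should be a bit more explicit is in noting that, for $a<i$, the four tightness conditions you obtain are \emph{literally} the four inequalities in condition~(2) once one observes that $a$ plays the role of both $k$ and $k'$ there; as written this is left to the reader.
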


\begin{proof}

Let $t$ be equal to $t_{ij}$, $t_{\bar{i}j}$ or $t_{\bar{i}i}$, and let $v,w \in B_n$ such that $w=vt$.
It is obvious that $\ell_S(w) \leq \ell_S(v)+\ell_S(t)$, so we only need to determine when strict inequality occurs.  We write each $t$ as a product of simple reflections $t=s_{i_1} \cdots s_{i_r}$ and show that each of the conditions of the lemma corresponds to the assertion that, for each $k \in [r]$, one has $\ell_S(ws_{i_1} \cdots s_{i_{k-1}} s_{i_k})<\ell_S(ws_{i_1} \cdots s_{i_{k-1}})$.

\begin{enumerate}
\item 

If $t=t_{ij}$, then $t=s_{j-1}s_{j-2} \cdots s_i s_{i+1} \cdots s_{j-1}$.  
Each appearance of $s_k$ in this expression ($i\leq k<j)$ has the effect of exchanging $w(k)$ with either $w(i)$ or $w(j)$.  This reduces the length for each $s_k$ if and only if condition (1) is met. 

\item If $t=t_{\bar{i}j}$ then its decomposition into simple reflections is $$t=s_{i-1}\cdots s_1  s_{j-1} \cdots s_2 s_0^B s_1 s_0^B s_2 \cdots s_{j-1}s_1 s_2 \cdots s_{i-1}.$$ 

In the first consecutive substring $s_{i-1}\cdots s_1$, multiplying by $s_k$ reduces the length (for $1 \leq k \leq 
i-1$) if and only if $w(i)<w(k)$. 
Similarly, in the second part $s_{j-1}\cdots s_2$, multiplying by $s_{k'}$ (for $i+1\leq k'<j$) or $s_{k'+1}$ (for $1\leq k'<i$) reduces the length if and only if $w(j)<w(k)$.

Then, in the next consecutive substring $s_0^Bs_1s_0^B$, the two appearances of $s_0^B$ reduce length  if and only if $w(i),w(j)<0$. Furthermore, whenever we have $w(i),w(j)<0$, the intervening $s_1$ reduces length since it moves a negative entry to the left of a positive entry.

The following part $s_2 \cdots s_{j-1}$ moves the entry $-w(i)$ to the $j$-th position. Hence, each $s_{k'+1}$ (if $1\leq k'<i$) or $s_{k'}$ (if $i+1\leq k'<j$) will reduce length if and only if $-w(i)>w(k)$. 
Similarly, each $s_{k}$ in the final substring $s_1 \cdots s_{i-1}$ reduces length if and only if $-w(j)>w(k)$ for each $1 \leq k <i$. 

\item 
If $t=t_{\bar{i}i}$ then $t=s_is_{i-1} \cdots s_1 s_0^B s_1 s_2 \cdots s_i$. Here, each application of $s_k$ appearing before $s_0^B$ reduces the length if and only if $w(k)>w(i)$. After the application of $s_0^B$, each successive $s_k$ will reduce length if and only if $-w(i)>w(k)$.  Hence, in order for every $s_k$ in the word representing $t$ to reduce length, it is necessary and sufficient that $|w(k)|<|w(i)|$ for all $1\leq k<i$.  (The single $s_0^B$ clearly reduces length if and only if $w(i)<0$.)

\end{enumerate}
\end{proof}

\subsection{Algorithm for realizing depth in type B}

To keep the logical status of our theorem clear as we work through the proof, we let $d(w)$ denote our expected value of $\dep(w)$ according to our formula.  Hence, for $w\in B_n$, let
\begin{equation*}
d(w):=\sum\limits_{\{i \in [n] \mid w(i)>i\}}{(w(i)-i)} + \sumlim_{i \in \Neg(w)}{|w(i)|}+\frac{o^B(w)-\nneg(w)}{2}.
\end{equation*}

In order to prove Theorem \ref{thm:depthB}, we proceed in two steps. First, we supply an algorithm that associates to each $w \in B_n$  a decomposition of $w$ into a product of reflections whose sum of depths is $d(w)$. This will prove that $d(w)\geq\dep(w)$. Then we will show that $d(w)\leq\dep(w)$.

It will be more convenient for us to describe our algorithm as an algorithm to sort $w$ to the identity signed permutation $[1,\ldots,n]$ using a sequence of right multiplications by reflections $t_{i_k j_k}$.  One can then recover a decomposition of $w$ as the product of these reflections in reverse order.  Our algorithm is applied on each indecomposable part of $w$ separately; hence from now on we assume that $w$ is indecomposable.  We say that an entry $x$ is in its {\bf natural position} in $w$ if $x=w(x)$.

\begin{alg} Let $w \in B_n$ be indecomposable.
\begin{enumerate}
\item Let $i$ be the position such that the entry $w(i)$ is maximal among all $w(i)$ with $w(i)>i$, for $i\in[n]$.  (If no such $i$ exists, continue to the next step.)  Now let $j$ be the index such that the entry $w(j)$ is minimal among all $j$ with $i<j\leq w(i)$.  Right multiply by $t_{ij}$.  Repeat this step until $w(i)\leq i$ for all $i$, $1\leq i\leq n$.

\noindent Let $u$ be the element obtained after the last application of Step 1, $ w \stackrel{t_{i_1j_1}}{\rightarrow} \cdots \stackrel{t_{i_kj_k}}{\rightarrow} u.$

\item If $\nneg(u)\geq2$ then right multiply $u$ by $t_{\bar{ij}}$, where $w(i)$ and $w(j)$ are the two negative entries of largest absolute value in $u$, and go back to Step 1.  If $\nneg(u)=1$ then right multiply $u$ by $t_{\bar{i}i}$, where $w(i)$ is the sole negative entry, and go back to Step 1.  If $\nneg(u)=0$, then we are finished.
\end{enumerate}
\end{alg}

In other words, the algorithm begins by shuffling each positive entry $w(i)$ which appears to the left of its natural position into its natural position, starting from the largest and continuing in descending order.
Once this is completed, an unsigning move is performed.  If there is more than one negative entry in $w$, we unsign a pair, thus obtaining two new positive entries. The process restarts, and the entries may be further shuffled.
Unsigning and shuffling moves continue to alternate until neither type of move can be performed. The last unsigning move will be a single one if the number of negative entries in $w$ is odd.

At the end of the algorithm, there are no negative entries, and no positive entry is to the left of its natural position.  Hence we must have the identity signed permutation.

Note that although the algorithm assumes that $w$ is indecomposable, it can happen in the course of the algorithm that $w$ is transformed into a decomposable permutation. This does not pose a problem since the only way this occurs is by creating blocks on the right consisting entirely of positive entries that are further acted upon only by Step 1, and indecomposability matters only in determining when $\nneg(u)=1$ in Step 2.

\smallskip

We demonstrate our algorithm in the following example.
The depth of each reflection is given below the corresponding arrow.

\begin{exa}

Let $w=[\bar{6}, \bar{3}, \bar{2}, 8, 7, 5, 9, \bar{4}, \bar{1}] \in B_9$. Our first step is to shuffle entry $9$ to position 9:
$$w= [\bar{6}, \bar{3}, \bar{2}, 8, 7, 5, {\bf 9}, \bar{4}, \bar{1}]
\stackrel[1]{t_{78}}{\rightarrow}
[\bar{6}, \bar{3} , \bar{2}, 8, 7, 5, \bar{4}, {\bf 9}, \bar{1}]
\stackrel[1]{t_{89}}{\rightarrow}
[\bar{6}, \bar{3}, \bar{2},  8, 7, 5, \bar{4}, \bar{1}, 9].$$
Then we further apply Step 1, first to the entry 8 and then to 7:
\begin{eqnarray*}
[\bar{6}, \bar{3}, \bar{2}, {\bf 8}, 7, 5, \bar{4}, \bar{1}, 9] \stackrel[3]{t_{47}}{\rightarrow} [\bar{6}, \bar{3}, \bar{2}, \bar{4}, 7, 5, {\bf 8}, \bar{1}, 9] &\stackrel[1]{t_{78}}{\rightarrow}& [\bar{6}, \bar{3}, \bar{2}, \bar{4}, {\bf 7}, 5, \bar{1}, 8,  9]\\
&\stackrel[2]{t_{57}}{\rightarrow}& [\bar{6}, \bar{3}, \bar{2}, \bar{4},  \bar{1}, 5, 7,  8,  9].
\end{eqnarray*}
Now, none of the positive entries are located to the left of its natural position, so we proceed with Step 2 to unsign the two most negative entries, which are $\bar{6}$ and $\bar{4}$:
$$[\bar{\bf 6}, \bar{3}, \bar{2}, \bar{\bf 4},  \bar{1}, 5, 7,  8,  9] \stackrel[4]{t_{\bar{1}4}}{\rightarrow} [4, \bar{3}, \bar{2}, 6,  \bar{1}, 5, 7,  8,  9].$$
We again apply again Step 1 to push $6$ and then $4$ forward to their natural positions:
\begin{eqnarray*}
 [4, \bar{3}, \bar{2}, {\bf 6},  \bar{1}, 5, 7,  8,  9]  &\stackrel[1]{t_{45}}{\rightarrow}&  [4, \bar{3}, \bar{2}, \bar{1}, {\bf 6}, 5, 7,  8,  9] \stackrel[1]{t_{56}}{\rightarrow}
[{\bf{4}}, \bar{3}, \bar{2}, \bar{1}, 5, 6, 7,  8,  9]\\
\stackrel[1]{t_{12}}{\rightarrow} [\bar{3}, {\bf{4}}, \bar{2}, \bar{1}, 5, 6, 7 , 8,  9]  &\stackrel[1]{t_{23}}{\rightarrow}& [\bar{3}, \bar{2}, {\bf{4}},  \bar{1}, 5, 6, 7,  8,  9] \stackrel[1]{t_{34}}{\rightarrow} [\bar{3}, \bar{2}, \bar{1}, 4, 5, 6, 7, 8, 9].
\end{eqnarray*}
We now unsign the pair $\bar{3}$ and $\bar{2}$:
$$[{\bf \bar{3}}, {\bf \bar{2}}, \bar{1}, 4, 5, 6, 7, 8, 9] \stackrel[2]{t_{\bar{1}2}}{\rightarrow}  [2, 3, \bar{1},  4, 5, 6, 7, 8, 9].$$

\noindent Now we again apply Step 1 to move $3$ and $2$ to their natural positions:
$$[2, {\bf 3}, \bar{1},  4, 5, 6, 7,  8,  9]
\stackrel[1]{t_{23}}{\rightarrow} [{\bf 2}, \bar{1}, 3, 4, 5, 6, 7, 8, 9]
\stackrel[1]{t_{12}}{\rightarrow} [\bar{1}, 2, 3, 4, 5, 6, 7, 8, 9].$$

\noindent Finally we unsign $1$:
$$[{\bf {\bar{1}}}, 2, 3, 4, 5, 6, 7, 8, 9] \stackrel[1]{t_{\bar{1}1}}{\rightarrow} [1, 2, 3, 4, 5, 6, 7, 8, 9],$$
and we are done. We obtained $w=t_{\bar{1}1}t_{12}t_{23}t_{\bar{1}2}t_{34}t_{23}t_{12}t_{56}
t_{45}t_{\bar{1}4}t_{57}t_{78}t_{47}t_{89}t_{78}$. The sum of the depths of the reflections in the decomposition is $22$, and this agrees with $d(w)=(8-4)+(7-5)+(9-7) - (-6-3-2-4-1)+\frac{1-5}{2}$, since $w$ is indecomposable.
\medskip

Note that, in $w$, $9$ is two places away from its natural position, so $9-w^{-1}(9)=2$. This is the cost we pay for moving $9$ to its place. Likewise, $8$ and $7$ contribute $4$ and $2$ respectively. The treatment of the pair $6$ and $4$, starting with their unsigning and ending with their arrival at their natural positions, costs $6+4-1=9$. The treatment of $2$ and $3$ costs $2+3-1$, and the unsigning of $1$ costs $1$.  Each of these costs corresponds to a specific contribution to $\dep(w)$ in Equation~\ref{eq:d_B}, and we will show this correspondence holds in general.
\end{exa}

As is clear from the description, given any $w \in B_n$, the algorithm retrieves the identity permutation. Now we show that the total depths of the transpositions applied in the algorithm is exactly $d(w)$. This will prove that $\dep(w)\leq d(w)$. The proof is based on the following four lemmas.

\begin{lem}\label{lem:typeBstep1cost}
Let $w \in B_n$ be indecomposable. Then the total cost of the first application of Step $1$ is $\sum_{w(i)>i}{(w(i)-i)}$, and the resulting permutation $u\in B_n$ has $d(u)=d(w)-\sum_{w(i)>i}{(w(i)-i)}$.
\end{lem}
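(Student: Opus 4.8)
The plan is to establish the two assertions separately. For the total cost, I would show that the first execution of Step~1 handles the exceedance entries of $w$ one at a time, in decreasing order of value and without interaction. Let $v$ be the currently largest exceedance value, occupying some position $i$, so $v>i$. A pigeonhole argument shows that the minimum of the entries in positions $i+1,\dots,v$ (of the permutation at that moment) is at most $i$: otherwise those entries, together with the entry $v$ in position $i$, would be $v-i+1$ distinct values all lying in $\{i+1,\dots,v\}$, since a value there exceeding $v$ would be a larger exceedance. Consequently the entry that Step~1 slides into position $i$ is not an exceedance there, so no new exceedances are created; and since each reflection $t_{ij}$ employed alters only positions $i$ and $j$, the positions of all other exceedance entries remain fixed while $v$ is being moved to the right. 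Thus $v$ travels from its original location $w^{-1}(v)$ to position $v$ along a chain $t_{i_0i_1},t_{i_1i_2},\dots,t_{i_{s-1}i_s}$ with $w^{-1}(v)=i_0<i_1<\dots<i_s=v$, whose depths $\dep(t_{i_{r-1}i_r})=i_r-i_{r-1}$ telescope to $v-w^{-1}(v)$. Summing over all exceedances of $w$ gives the total cost $\sum_{w(i)>i}(w(i)-i)$.

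For the claim about $d(u)$, the key observation is that every reflection used in Step~1 is a $t_{ij}$ with $i<j$, and such a reflection merely permutes the window entries together with their signs. Hence the multiset of signed values is preserved, so $\nneg(u)=\nneg(w)$ and $\sum_{i\in\Neg(u)}|u(i)|=\sum_{i\in\Neg(w)}|w(i)|$; moreover $u$ has no exceedances, so the first sum in $d(u)$ vanishes. Comparing the two formulas for $d$, the desired equality is therefore equivalent to $o^B(u)=o^B(w)$. Since $w$ is indecomposable, $o^B(w)$ is $1$ or $0$ according to the parity of $\nneg(w)$, while in general $o^B(u)\equiv\nneg(u)=\nneg(w)\pmod 2$, as the negatives of $u$ are distributed among its blocks. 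It thus suffices to prove $o^B(u)\le 1$, and for that I would establish the invariant $(\ast)$: throughout the first execution of Step~1, all negative entries of the current permutation lie in its first block, or equivalently, whenever the current permutation decomposes as a nontrivial direct sum, the right-hand summand is sign-free. Granting $(\ast)$ for $u$, every block of $u$ other than the first is sign-free, so $o^B(u)$ equals the B-oddness of the single first block of $u$, which contains all $\nneg(w)$ negatives; hence $o^B(u)=o^B(w)$, and the formula follows.

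The invariant $(\ast)$ I would prove by induction on the reflections applied, the base case being trivial because an indecomposable $w$ is a single block. For the inductive step, consider a Step~1 move $\tilde w\mapsto\tilde w'=\tilde w\,t_{ij}$, where $v=\tilde w(i)$ is the current maximal exceedance value and $m=\tilde w(j)=\min\{\tilde w(k)\mid i<k\le v\}$, so $m\le i$. Maximality of $v$ forces $\tilde w(k)=k$ for all $k>v$, so the swap acts within positions $\le v$. If $\tilde w'$ splits at a position $p$ with $p<i$ or $p\ge j$, then $\tilde w$ also splits at $p$, and the right-hand summands of $\tilde w$ and $\tilde w'$ differ only by the transposition of positions $i,j$, which lie on a common side of $p$; the induction hypothesis then yields $(\ast)$ at $p$ (in the case $p<i$ one additionally uses that $\tilde w(i)=v>0$ and that $\tilde w(j)=m$ already lies in the sign-free right-hand part). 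The delicate case is $i\le p<j$. Here $\tilde w'$ splitting at $p$ forces $\{|\tilde w(k)|: k\le p,\ k\ne i\}\cup\{|m|\}=\{1,\dots,p\}$, so $|m|\le p$ and all of the $p-1$ absolute values $|\tilde w(k)|$ with $k\le p$, $k\ne i$, are $\le p$. If the right-hand summand of $\tilde w'$ contained a negative entry, it would occupy some position $k^*>p$ with $k^*\ne j$ and $k^*\le v$, so $m\le\tilde w(k^*)<0$ and hence $|\tilde w(k^*)|\le|m|\le p$; but then the $p+1$ distinct positions $\{k\le p:k\ne i\}\cup\{j,k^*\}$ would all carry entries of absolute value in $\{1,\dots,p\}$, contradicting the injectivity of $\tilde w$. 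This shows $(\ast)$ holds for $\tilde w'$.

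I expect this last case of the induction to be the only real obstacle: one has to rule out that a newly created direct-sum splitting separates the negative entries, and the pigeonhole count above is precisely what does this. The remaining ingredients — the telescoping depth computation in the first part and the reduction of the second part to the identity $o^B(u)=o^B(w)$ — are routine.
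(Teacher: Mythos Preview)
Your argument for the total cost is correct and essentially the same as the paper's: both use the pigeonhole observation that the minimum in positions $i+1,\dots,v$ is at most $i$, so no new exceedances appear and the cost for each exceedance value $v$ telescopes to $v-w^{-1}(v)$.

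For the second assertion, your approach is more ambitious than the paper's. The paper simply asserts that Step~1 ``does not affect the oddness,'' deferring the justification to the subsequent Observation (which states that after Step~1 the permutation consists of a single indecomposable block followed by fixed points, so all negatives sit in one block). You instead try to prove the invariant $(\ast)$ directly along the sequence of Step~1 moves. This is a valid strategy and your reduction of $d(u)=d(w)-\sum(w(i)-i)$ to $o^B(u)=o^B(w)$, and then to $o^B(u)\le 1$, is clean.

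There is, however, a genuine gap in your inductive step. The assertion ``maximality of $v$ forces $\tilde w(k)=k$ for all $k>v$'' is false: take $\tilde w=w=[3,\bar5,1,4,2]$, which is indecomposable with maximal exceedance $v=3$, yet $\tilde w(5)=2\ne 5$. You use this false claim to conclude $k^*\le v$ in the delicate case $i\le p<j$, and without it your pigeonhole count does not apply when $k^*>v$ (since then $|\tilde w(k^*)|>p$, not $\le p$). The gap is repairable: show that if every entry of $\tilde w$ in positions $\{p+1,\dots,v\}\setminus\{j\}$ is positive, then these entries are forced (by $|\tilde w(k)|>p$, $\tilde w(k)\le v$, and $\tilde w(k)\ne v$) to be exactly $\{p+1,\dots,v-1\}$, whence $\tilde w$ itself splits at $v$; the inductive hypothesis on $\tilde w$ then forbids any negative at positions $>v$. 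Contrapositively, if the right summand of $\tilde w'$ contains a negative, there is already one at some $k^{**}\in\{p+1,\dots,v\}\setminus\{j\}$, and your pigeonhole argument applies to $k^{**}$.
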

\begin{proof}

Define $E(w):=\{w(i) \mid w(i)>i\}$. If $w(m)$ is the largest entry in $E(w)$, then the algorithm starts with several applications of Step $1$ to move $w(m)$ to its natural position.  Furthermore, no positive entry is moved to the left of its natural position in these moves since there will always be an entry $w(j)\leq m$ among the entries in positions $j$ with $m<j\leq w(m)$.  In fact, by our choice of $m$, $w(j)<w(m)$ for all such $j$, and, among the $w(m)-m$ distinct integers $\{w(m+1),\ldots,w(w(m))\}$, all of which are smaller than $w(m)$, one must be less than or equal to $m$.  Hence these moves bringing $w(m)$ to its natural position cost $w(m)-m$ and decrease 
$\sum_{w(i)>i}{(w(i)-i)}$ by $w(m)-m$.  Thus the first statement follows by induction on $\#E(w)$.

The second statement follows since there are no $i$ with $u(i)>i$, and Step 1 does not change any negative entries of $w$ (though it may move them around) or affect its oddness.
\end{proof}

\begin{obs}
Let $w\in B_n$, and assume that Step $1$ has been applied on $w$ until it cannot be continued, obtaining $u \in B_n$. Let $k$ denote the largest absolute value of a negative entry in $u$ (which is the same as in $w$). Then 

\begin{itemize}
\item The signed permutation $u$ consists of two parts: the leftmost $k$ entries, which form an indecomposable permutation $u'$, and the last $n-k$ entries, each of which is positive and in its natural position. 

\item All the positive entries in $u$ are located to the right of (or in) their natural positions.  In particular, every entry in $u$  of absolute value greater than $k$ is positive and in its natural position. 
\end{itemize}
\end{obs}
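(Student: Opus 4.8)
The plan is to read off both bullets, for the most part, from the single fact that Step 1 is iterated until $w(i)\le i$ for all $i$, so the output $u$ satisfies $u(i)\le i$ for every $i\in[n]$; the one assertion requiring genuine work is the indecomposability of $u'$ (recall $w$ is indecomposable, as in the Algorithm). Throughout, call a position $m$ a \emph{splitting point} of $v\in B_n$ if $\{\,|v(i)|:i\in[m]\,\}=\{1,\dots,m\}$ --- equivalently, if $v=a\oplus b$ with $a\in B_m$ --- so that $v$ is indecomposable precisely when it has no splitting point in $[1,n-1]$. For the second bullet: if $x>0$ is an entry of $u$ in position $p$ then $x=u(p)\le p$, so $x$ is in or to the right of its natural position; and since a swap $t_{ij}$ moves entries together with their signs, $u$ has the same entries as $w$, so the absolute values occurring with a minus sign are the same for both and have maximum $k$, hence an entry of absolute value $>k$ is positive. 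A downward induction on $x$ from $n$ to $k+1$ then shows the entry $x$ is fixed: it lies in a position $p\ge x$, while positions $x+1,\dots,n$ already hold $x+1,\dots,n$, so $p\le x$, forcing $p=x$. Consequently positions $k+1,\dots,n$ of $u$ are the fixed points $k+1,\dots,n$, positions $1,\dots,k$ carry exactly the entries of absolute values $1,\dots,k$, and $u=u'\oplus[1,\dots,n-k]$ with $u'\in B_k$. (If $w\in S_n$ this forces $u$ to be the identity; if $k\le 1$ then $u'\in B_k$ trivially has no splitting point in $[1,k-1]=\varnothing$, so I may assume $k\ge2$.)

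For the indecomposability of $u'$ I would argue by contradiction. If $u'$ is decomposable then $u$ has a splitting point $m\in[1,k-1]$, while $w$ has none. Let $w=w^{(0)},w^{(1)},\dots,w^{(L)}=u$ be the permutations created by the successive swaps of Step 1, and let $t^*$ be least with $w^{(t^*)}$ having a splitting point at $m$. A swap with both its positions in $[m]$, or both outside, does not change the multiset of entries in positions $1,\dots,m$, so the swap from $w^{(t^*-1)}$ to $w^{(t^*)}$ must be some $t_{i^*j^*}$ with $i^*\le m<j^*$. By the description of Step 1, $i^*$ is the position of the maximal displaced value $M^*:=w^{(t^*-1)}(i^*)$, the entry $w^{(t^*-1)}(j^*)$ is the minimum among positions $i^*+1,\dots,M^*$, and $i^*<j^*\le M^*$, so $M^*\ge j^*>m$. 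The positions $[m]\setminus\{i^*\}$ are unchanged by the swap and hold absolute values $\le m$ afterward, hence already do so beforehand; position $i^*$ holds $M^*$ (absolute value $>m$) beforehand and an entry of absolute value $\le m$ afterward, so $|w^{(t^*-1)}(j^*)|\le m$.

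From this the contradiction falls out by locating the entry $-k$ in $w^{(t^*-1)}$. First, positions $1,\dots,m$ contain no negative entry of absolute value $>m$: the only entry of such absolute value there is $M^*$, which is positive. Second, positions $m+1,\dots,M^*$ contain no entry $\le-(m+1)$: the minimum over positions $i^*+1,\dots,M^*$, namely $w^{(t^*-1)}(j^*)$, has absolute value $\le m$ and so is $\ge-m$, and $\{i^*+1,\dots,M^*\}\supseteq\{m+1,\dots,M^*\}$. Third, positions $M^*+1,\dots,n$ hold the positive values $M^*+1,\dots,n$: since $M^*$ is the maximal displaced value, every value $v>M^*$ occupies a position $\ge v$, which forces $v$ into position $v$ for $v=n,n-1,\dots,M^*+1$. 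Together these say $w^{(t^*-1)}$ has no entry $\le-(m+1)$ anywhere; but it has the same entries as $w$, in particular the entry $-k$, and $k\ge m+1$ --- a contradiction. Hence $u'$ is indecomposable, which finishes the proof.

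I expect the indecomposability of $u'$ to be the only real obstacle. The crux is the structural point that a new splitting point at $m$ can arise only through the specific Step 1 swap that lifts the current maximal displaced value $M^*$ out of the first $m$ positions, together with the fact that the entry taking its place is a \emph{minimum} over an interval of positions reaching all the way up to $M^*$; this is exactly what confines every sufficiently negative entry --- in particular $-k$ --- to positions past $M^*$, where the third fact above leaves room only for positive entries. Everything else is a direct consequence of the stopping rule $u(i)\le i$ of Step 1.
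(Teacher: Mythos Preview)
Your argument for the second bullet is clean, and your setup for the indecomposability of $u'$ is the right idea, but the third of your three facts has a gap. You claim positions $M^*+1,\dots,n$ of $w^{(t^*-1)}$ hold the positive values $M^*+1,\dots,n$, via the downward induction ``every value $v>M^*$ sits in position $\ge v$, so $v$ is in position $v$ for $v=n,\dots,M^*+1$''. That induction step needs each such $v$ to occur as a \emph{positive} entry. If $k>M^*$ then the value $k$ occurs as the negative entry $-k$, the induction only runs down to $v=k+1$, and positions $M^*+1,\dots,k$ are left unaccounted for; $-k$ could sit there and no contradiction follows. With your definition of $t^*$ (first time a splitting point appears \emph{at the fixed $m$}), nothing excludes $M^*<k$.

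The repair is in the choice of $t^*$. Your first two facts already show more than you use: every entry of $w^{(t^*-1)}$ in positions $1,\dots,M^*$ has absolute value $\le M^*$ (positions $[m]\setminus\{i^*\}$ by your first fact; position $i^*$ holds $M^*$; and in positions $m{+}1,\dots,M^*$ the entries are $\ge -m$ by your second fact, while any positive entry there is $\le M^*$ by maximality of $M^*$). Hence $M^*$ itself is a splitting point of $w^{(t^*-1)}$. If instead you take $t^*$ to be the first step at which \emph{some} splitting point in $[1,k-1]$ appears (and let $m$ be that point), then $w^{(t^*-1)}$ has no splitting point in $[1,k-1]$, which forces $M^*\ge k$; your downward induction then goes through and the contradiction is complete. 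The paper records this result as an Observation without a proof in the text; the authors' unpublished sketch argues along a different line, tracing a negative entry at the start of a putative right-hand block of $u$ back through the moves of Step~1.
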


\begin{lem}
\label{lem:Bshuffleresult}Let $w \in B_n$ be indecomposable. Then Step 1 of the algorithm yields a permutation $u$ satisfying $u(i)<0$, $u(j)<0$, $|u(i)| \geq j$, and $|u(j)| \geq i$, where $|u(i)|$ and $|u(j)|$ are the two largest absolute values of negative entries and $i<j$.
\end{lem}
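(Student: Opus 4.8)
The plan is to analyze what the configuration of $u$ looks like immediately after Step 1 has been carried out to completion, using the structural Observation that precedes this lemma. First I would invoke that Observation: once Step 1 cannot be continued, $u$ splits as $u = u' \oplus (\text{positive entries in natural position})$, where $u'$ lives in the first $k$ positions and is indecomposable, and $k$ is the largest absolute value of a negative entry (necessarily achieved at some position within $u'$). In particular every negative entry of $u$ sits in a position $\leq k$, so both of the two most-negative entries are among $u(1),\ldots,u(k)$. This immediately gives $u(i)<0$ and $u(j)<0$ (the two largest-absolute-value negative entries exist because $\nneg(u)\geq 2$, which is the hypothesis under which Step 2 reaches this branch, and which I would note is implicitly in force here since otherwise the claim is vacuous).

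The substance is the inequalities $|u(i)|\geq j$ and $|u(j)|\geq i$, with $i<j$. The key fact is that the values $|u(i)|$ and $|u(j)|$ are the two largest absolute values among negative entries, and that $u'$ is indecomposable. I would argue as follows. Let $k=|u(i)|$ be the overall largest absolute value of a negative entry; since every entry of absolute value $>k$ is positive and in natural position (second bullet of the Observation), the value $k$ is attained within $u'$, i.e. at a position $\le k$; but also, by the second bullet, all positive entries of $u$ are at or to the right of their natural positions, which forces $|u^{-1}(k)|$... more carefully: the entry of absolute value $k$ occupies position $i$, and positions $1,\ldots,i-1$ contain entries whose absolute values are all $<k$. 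Now $u'$ being indecomposable on positions $[k]$ means in particular that no proper prefix of $u'$ is a union of blocks; combined with the fact that positions $k+1,\ldots,n$ are filled by $k+1,\ldots,n$, the only way to get the entry $-k$ not to force a decomposition before position $k$ is for every position $j'$ with $i<j'\le k$ ... actually the cleanest route: since $u$ restricted to $[k]$ is indecomposable and the complement is the identity, the position $j$ holding the \emph{second} most negative entry must satisfy $j\le k=|u(i)|$, giving $|u(i)|\ge j$. Symmetrically, I want $|u(j)|\ge i$; here I would use that positions $1,\ldots,i-1$ all carry entries of absolute value strictly less than $k$, but I need them to be less than $|u(j)|$ — this is where I must use that $|u(j)|$ is the \emph{second} largest, so any negative entry strictly to the left of position $i$ has absolute value $\le |u(j)|$, and in fact $<k$; together with indecomposability of the first $j$ columns of $u'$ this yields $i\le |u(j)|$. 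I would spell out that last implication via the standard fact that in an indecomposable (signed) permutation, for each prefix $[p]$ with $p<m$ there is some entry in $[p]$ whose image (in absolute value, accounting for the block structure) exceeds $p$, applied with $p=i-1$ or $p=j$ as needed.

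So the concrete steps, in order, are: (1) cite the Observation to get the two negative entries both in positions $\le k$ and to record $u'$ indecomposable with $[k+1,n]$ fixed; (2) identify $k=|u(i)|$ and note positions $<i$ carry absolute values $<k$, hence (being negative or positive) all $\le|u(j)|$ when they are negative; (3) use indecomposability of $u'$ on the prefix ending at position $j$ to force $j\le|u(i)|$; (4) use indecomposability of $u'$ on the prefix ending at position $i$ (or $i-1$) to force $i\le|u(j)|$. The main obstacle I anticipate is step (4): making precise the claim that indecomposability of $u'$, \emph{together with} the facts that $|u(j)|$ is the second-largest absolute value and that all positive entries lie weakly right of their natural positions, prevents position $i$ from being ``too far right'' relative to $|u(j)|$. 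I expect one has to rule out the scenario where $u'$ were to decompose just after position $|u(j)|$; the reason it cannot is precisely that the entry $-k=u(i)$ with $i\le|u(j)|$ (from step (3)) sits in that prefix while $k>|u(j)|$, so the prefix is not closed under $u'$ — i.e., $i\le |u(j)|$ is forced on pain of contradicting $i<j\le |u(i)|=k$ and indecomposability. I'd present this as a short contradiction argument rather than a direct construction.
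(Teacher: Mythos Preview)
Your outline through step (3) tracks the paper's approach: both rest on the key fact that for every $1\le p<k$ there is some $q\le p$ with $u(q)<-p$ (indecomposability of $u'$ combined with the fact that every positive entry sits weakly right of its natural position). Applied at $p=j-1$ this gives $|u(i)|\ge j$, since $-|u(i)|$ is the most negative entry among positions $1,\ldots,j-1$.

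Step (4), however, has two real problems.

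First, you assume $k=|u(i)|$, i.e.\ that the most negative entry sits at the earlier of the two positions. This is not automatic: from the indecomposable $w=[2,\bar3,\bar4,1]$, Step~1 yields $u=[\bar3,2,\bar4,1]$ with $i=1$, $j=3$, $|u(i)|=3<4=|u(j)|=k$. The paper handles this by an explicit case split: when $|u(j)|>|u(i)|$ one simply chains $|u(j)|>|u(i)|\ge j>i$ using step (3), and is done.

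Second, your contradiction argument for the remaining case is circular. You invoke ``the entry $-k=u(i)$ with $i\le|u(j)|$ (from step (3))'', but step (3) established $j\le|u(i)|$, not $i\le|u(j)|$; the latter is precisely the conclusion you are after. Nothing in your sketch explains why, under the assumption $i>|u(j)|$, the prefix $[1,|u(j)|]$ would have to be a block or otherwise produce a contradiction.

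For the case $y:=|u(j)|<|u(i)|=:x$ the paper does not argue via indecomposability of a prefix but via pigeonhole: the entries of absolute value $y+1,\ldots,x-1$ are all positive (as $x,y$ are the two largest negative absolute values), hence occupy positions in $\{y+1,\ldots,x\}$; if $y<i$ then $-x$ and $-y$ also sit in positions $\{y+1,\ldots,x\}$, forcing $x-y+1$ distinct entries into $x-y$ positions.

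Your prefix idea \emph{can} be made to work, but not as you wrote it: apply the key fact at $p=i-1$ (for $i\ge 2$). The resulting negative entry $u(q)$ with $q<i<j$ is neither $-x$ nor $-y$, so $|u(q)|<\min(x,y)$; combined with $|u(q)|\ge i$ this yields $\min(x,y)>i-1$, hence $y\ge i$ once the case $y>x$ has been disposed of as above. That is a clean alternative to the paper's pigeonhole count, but it still requires the case split and a non-circular use of the key fact.
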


\begin{proof}

Let $k$ be defined as in the observation, and let $x:=|u(i)|$ and $y:=|u(j)|$. For any $p \leq k$, there must exist $q\leq p$ such that $u(q)<-p$.  (Indeed, there must be some $q\leq p$ with $|u(q)|>p$ since $u'$ is indecomposable, and we cannot have $u(q)>0$ since every positive entry is at or to the right of its natural position after Step $1$.) Applying this fact to $p=j-1$ immediately yields that $x \geq j$ since $-x$ is the smallest entry among the first $j-1$ entries.

Now, we show $y \geq i$. If $y>x$, then by what we have just shown we have $y>x\geq j>i$, so we are done. Otherwise, $y<x$. Consider the positions of the $x-y-1$ entries of absolute values $y+1,\ldots,x-1$. These entries must be positive since $x$ and $y$ are the two negative entries of largest absolute value. Hence after Step $1$, they must be at or to the right of their natural positions, and, in particular, to the right of position $y$.  Moreover, these entries must be at or to the left of position $x$ since $x$ is the smallest entry in $u$, so $k=x$ and $u(p)>x$ for all $p>x$.  If $y<i$, this means both $x$ and $y$ are in positions to the right of position $y$, and hence the entries $-y,y+1,\ldots,x-1,-x$ are in positions $y+1,\ldots, x=k$, a contradiction.  Therefore, we must have $y\geq i$.
\end{proof}

\begin{lem}
\label{lem:Bdoubleunsigncost}
Let $u \in B_n$ such that $u(m)<m$ for all $m$, and let $i<j$ be such that  $u(i)<0, u(j)<0$, $|w(i)| \geq j$ and $|w(j)| \geq i$. Then right multiplying $u$ by $t_{\bar{ij}}$ and repeatedly applying Step 1 of the algorithm until it can no longer be applied unsigns $w(i)$ and $w(j)$ and puts them in their natural positions.  These reflections cost in total exactly $|w(i)|+|w(j)|-1$.  The resulting permutation $u'\in B_n$ has $d(u')=d(u)-(|w(i)|+|w(j)|-1)$.
\end{lem}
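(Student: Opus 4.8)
Write $x:=|u(i)|$ and $y:=|u(j)|$ (in the statement, the $w$'s should be read as $u$'s), and note that $x\neq y$ since $u(i)=-x\neq -y=u(j)$. Right multiplication by $t_{\bar{i}j}$ turns $u$ into the signed permutation $\tilde u$ that agrees with $u$ everywhere except that $\tilde u(i)=y>0$ and $\tilde u(j)=x>0$; by Lemma~\ref{lem:typeBdepthreflections} this move costs $\dep(t_{\bar{i}j})=i+j-1$. Because $u(m)<m$ for every $m$, the only indices $m$ with $\tilde u(m)>m$ are $j$, which occurs exactly when $x>j$, and $i$, which occurs exactly when $y>i$. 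The plan is to show that the ensuing applications of Step~1 bring $x$ into position $x$ and $y$ into position $y$ as positive entries, cost $(x-j)+(y-i)$ in total, and create no new entry lying to the left of its natural position, so that Step~1 halts at a signed permutation $u'$ with $u'(m)\le m$ for all $m$ whose only sign changes from $u$ are $-x\mapsto x$ and $-y\mapsto y$.

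The needed facts about Step~1 follow from the same considerations as in the proof of Lemma~\ref{lem:typeBstep1cost}. Whenever Step~1 is moving the maximal out-of-place entry $e$, sitting in some position $p$, the $e-p$ entries in positions $p+1,\dots,e$ are all less than $e$ by the maximality of $e$, so one of them is $\le p$ (among $e-p$ distinct values each below $e$, at least one must be $\le p$); Step~1 swaps $e$ with the least of these entries, which then lands weakly to the right of its own natural position and so is not out of place, while $e$ advances. Hence bringing $e$ from its starting position $p_0$ to position $e$ costs $e-p_0$ by telescoping (recall $\dep(t_{pq})=q-p$ for $p<q$), and it changes the set of out-of-place entries only by deleting $e$. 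The out-of-place entries of $\tilde u$ are $x$ at position $j$ and $y$ at position $i$, and Step~1 processes the larger of $x$ and $y$ first. If $x>y$, Step~1 moves $x$ at cost $x-j$ using only positions $\ge j$, hence without touching position $i$, and then moves $y$ at cost $y-i$ using only positions $\le y<x$, hence without touching position $x$. If $y>x$, Step~1 moves $y$ first at cost $y-i$; since $x\ge j$ exceeds every position below $j$, the entry $x$ is never the least entry in a window inspected by Step~1 while $y$ sits to the left of position $j$, so $y$ skips over position $j$ and thereafter never inspects it, leaving $x$ untouched; Step~1 then moves $x$ at cost $x-j$ using only positions $\le x<y$. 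In either case Step~1 terminates with $x$ and $y$ in their natural positions, and the reflections together cost $(i+j-1)+(x-j)+(y-i)=x+y-1$. (When $x=j$ or $y=i$, the corresponding phase is vacuous, matching a term $x-j=0$ or $y-i=0$.)

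It remains to track $d$. From the above, $\nneg(u')=\nneg(u)-2$ and $\sum_{m\in\Neg(u')}|u'(m)|=\sum_{m\in\Neg(u)}|u(m)|-x-y$. By the Observation, $u$ and $u'$ each decompose as a single indecomposable block followed by positive fixed points --- $u'$ because it is the result of running Step~1 to completion, and $u$ because it arises the same way whenever this lemma is invoked --- so $o^B$ equals $\nneg$ reduced modulo $2$ for each, giving $o^B(u')=o^B(u)$. Since $u(m)<m$ and $u'(m)\le m$ for all $m$, the exceedance sum in the definition of $d$ vanishes for both permutations, and substituting the three displayed relations yields $d(u')=d(u)-(x+y-1)$.

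The step requiring the most care is the case $y>x$: one must confirm that the travelling entry $y$ never collides with the still-misplaced entry $x$ at position $j$, which is exactly what the hypothesis $x\ge j$ secures; one should also check that moving $x$ afterward still proceeds cleanly, which it does since $x$ is then the unique out-of-place entry. Everything else --- the telescoping of the depths $\dep(t_{pq})=q-p$ and the arithmetic with $d$ --- is routine.
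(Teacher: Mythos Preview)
Your proof is correct and follows the same approach as the paper's, while supplying considerably more detail: in particular, you carefully distinguish the cases $x>y$ and $y>x$ and verify that moving the larger of the two unsigned entries never disturbs the other (the key point being that $x\ge j$ prevents $x$ from ever being the minimum in a window inspected while placing $y$), a subtlety the paper's proof leaves entirely implicit. Your justification of $o^B(u')=o^B(u)$ via the Observation is slightly informal---the Observation implicitly assumes the input to Step~1 is indecomposable, which $\tilde u$ need not be---but this matches the paper's own level of rigor, which simply asserts the equality without argument, and the restriction ``whenever this lemma is invoked'' correctly confines the claim to the algorithmic context where it holds.
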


\begin{proof}
Right multiplication by $t_{\bar i j}$ unsigns $w(i)$ and $w(j)$, switches their places, and costs $i+j-1$.  Subsequently, moving $|w(i)|$ to its place adds a further $|w(i)|-j$, for a total of $|w(i)|+i-1$.  By the same reasoning, moving $|w(j)|$ to its place adds $|w(j)|-i$ to the depth, yielding a total of $(|w(i)|+i-1)+(|w(j)|-i)=|w(i)|+|w(j)|-1$.  Furthermore, as in the proof of Lemma~\ref{lem:typeBstep1cost}, no positive entry is moved to the left of its natural position in the Step 1 moves.

For the second claim, note that there are no $m$ with $u(m)>m$ or $u'(m)>m$, the only change to negative entries is that $w(i)$ and $w(j)$ have been made positive, and $o^B(u)=o^B(u')$.
\end{proof}

Finally, we have

\begin{lem}
\label{lem:Bsingleunsigncost}
Let $u\in B_n$ such that $u(m)\leq m$ for all $m$, and suppose $\nneg(u)=1$.  Then $d(u)=|w(i)|$, where $w(i)$ is the unique negative entry.  Furthermore, applying Step 2 and then repeatedly applying Step 1 until it can no longer be applied costs exactly $|w(i)|$ and results in the identity signed permutation.
\end{lem}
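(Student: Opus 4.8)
Write $-x$ for the unique negative entry of $u$, say in position $i$, so that $x=|u(i)|$. I would prove the two assertions separately. That $d(u)=x$ is a direct evaluation of the definition of $d$: the hypothesis $u(m)\le m$ for all $m$ makes the positive-exceedance sum in $d(u)$ vanish, the sum over $\Neg(u)$ equals $|u(i)|=x$, and since $u$ has a single negative entry exactly one type B block of $u$ contains an odd number of negative entries, so $o^B(u)=1=\nneg(u)$ and the term $\tfrac12(o^B(u)-\nneg(u))$ is $0$. Hence $d(u)=x$.

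For the algorithm I would first extract the structural facts I need, and these must be derived from the hypotheses directly, since $u$ is not assumed to arise from the algorithm (so the Observation preceding Lemma~\ref{lem:Bshuffleresult} is not available): the entries in positions $1,\dots,i-1$ are positive (the only negative entry sits in position $i$) and each is bounded by its position, so $\{u(1),\dots,u(i-1)\}=[i-1]$; since the value $x$ occurs in $u$ only as the negative entry in position $i$, it follows that $x\ge i$, that positions $i+1,\dots,x$ carry the values $\{i,\dots,x-1\}$, and that positions $x+1,\dots,n$ carry $\{x+1,\dots,n\}$. Now Step~2 with $\nneg(u)=1$ right-multiplies $u$ by $t_{\bar i i}$, which has depth $i$ by Lemma~\ref{lem:typeBdepthreflections} and merely flips the sign in position $i$; write $v$ for the result. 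Then $v$ has no negative entries, $v(m)=u(m)\le m$ for $m\ne i$, $v(i)=x$, the side segments (positions $1,\dots,i-1$ and $x+1,\dots,n$) are already the identity, and the entries in positions $i,\dots,x$ form an indecomposable block $\beta$ whose unique exceedance is $v(i)=x$. If $x=i$ then $\beta=[1]$ and the subsequent Step~1 is vacuous; if $x>i$, applying Lemma~\ref{lem:typeBstep1cost} to $\beta$ shows the ensuing Step~1 moves cost $\sum_{v(m)>m}(v(m)-m)=x-i$ and leave a block $\beta'$ with $d(\beta')=d(\beta)-(x-i)=0$, which (having no negative entries and no exceedance) is the identity; hence all of $v$ becomes the identity. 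In both cases the total cost is $i+(x-i)=x=d(u)$, and the algorithm terminates at the identity signed permutation.

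I do not anticipate a genuine difficulty: computing $d(u)$ and the depth of the lone $t_{\bar i i}$ move is immediate, and the remaining Step~1 moves are governed by Lemma~\ref{lem:typeBstep1cost}. The only point requiring care is the one flagged above — because the lemma's hypothesis is merely that $u(m)\le m$ with $\nneg(u)=1$, rather than that $u$ is indecomposable or a by-product of the algorithm, I cannot invoke the Observation and must instead establish by hand the placement of the values $[i-1]$, $\{i,\dots,x-1\}$, and $\{x+1,\dots,n\}$, which in turn yields both the inequality $x\ge i$ and the indecomposability of the block $\beta$ on which Step~1 does its work.
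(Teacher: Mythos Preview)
Your proof is correct and follows essentially the same line as the paper's: evaluate $d(u)$ directly from the defining formula, then observe that the unsigning move $t_{\bar i i}$ together with the ensuing Step~1 moves costs $i+(x-i)=x$ and yields the identity. You are in fact more careful than the paper, which simply asserts that the negative entry sits in position $i=1$; your structural analysis (forcing $u(j)=j$ for $j<i$ and for $j>x$, so that the nontrivial block occupies positions $i,\dots,x$ with $-x$ in its leftmost slot) makes the same computation go through for arbitrary $i$ under the lemma's stated hypotheses, and your explicit check that this block is indecomposable legitimizes the appeal to Lemma~\ref{lem:typeBstep1cost}.
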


\begin{proof}
First, note that we have $d(u)=|w(i)|+\frac{1-1}{2}$.  Now note that, since $\nneg(u)=1$, we must have $i=1$, meaning that the single negative entry must be in the leftmost position, since every positive entry is at or to the right of its natural position.  Therefore, unsigning $|w(i)|$ and moving it to its natural position add $1+(|w(i)|-1)=|w(i)|$ to $d(w)$ since, as in the proof of Lemma~\ref{lem:typeBstep1cost}, no positive entry is moved to the left of its natural position in the Step 1 moves.  These moves produce a signed permutation with no negative entries where every entry is at or to the right of its natural position, which must be the identity.
\end{proof}

\medskip

By the lemmas above, every time we run through a full series of Step 1 moves, or a Step 2 move followed by a series of Step 1 moves, we reduce $d(w)$ by exactly the cost of the moves we use.  Hence, by induction, we have shown that we can sort $w$ to the identity with cost $d(w)$.  This shows that $d(w)\geq \dep(w)$ for an indecomposable permutation $w$.  Since the algorithm is separately applied to each block of a decomposable permutation, and the formula $d(w)$ is additive over blocks, we can conclude the following.

\begin{prop}\label{prop:upperboundB}
For any $w\in B_n$, $d(w)\geq\dep(w)$.
\end{prop}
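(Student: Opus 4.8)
The plan is to read off an explicit reflection factorization of $w$ from the Algorithm and to observe that the depths of the reflections used sum to exactly $d(w)$. Since $\dep(w)$ is, by definition, the minimum of $\sum_k \dep(\beta_k)$ over all ways of writing $w$ as a product $\prod_k t_{\beta_k}$, exhibiting a single factorization whose depth-sum is $d(w)$ forces $\dep(w)\leq d(w)$, which is precisely the assertion.

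First I would treat $w$ indecomposable. The Algorithm right-multiplies $w$ by reflections $t_{i_1j_1},\ldots,t_{i_rj_r}$ --- ordinary transpositions during the Step 1 shuffles, and sign-changing reflections $t_{\bar ij}$ or $t_{\bar ii}$ during the Step 2 unsignings --- and terminates at the identity $[1,\ldots,n]$; reversing the list gives $w=t_{i_rj_r}\cdots t_{i_1j_1}$, so it remains to show $\sum_{k=1}^r\dep(t_{i_kj_k})=d(w)$. The idea is to use $d$ itself as a potential function and to track it phase by phase with the four lemmas already established: by Lemma~\ref{lem:typeBstep1cost} the first Step 1 phase costs $\sum_{w(i)>i}(w(i)-i)$ and lowers $d$ by the same amount, producing a $u$ with $u(m)\leq m$ for all $m$; Lemma~\ref{lem:Bshuffleresult} then certifies that the two negative entries of $u$ of largest absolute value sit in positions no larger than those absolute values, which is exactly the hypothesis of Lemma~\ref{lem:Bdoubleunsigncost}, by which the next Step 2 double-unsigning together with the ensuing Step 1 moves costs $|u(i)|+|u(j)|-1$ and again lowers $d$ by that amount, leaving a permutation still satisfying $u'(m)\leq m$ everywhere; and when only one negative entry remains, Lemma~\ref{lem:Bsingleunsigncost} says the terminal single unsigning costs $|u(i)|$, lowers $d$ to $0$, and returns the identity. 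Inducting on $\nneg(w)$, equivalently on the number of Step 2 moves the Algorithm performs, then yields $\sum_k\dep(t_{i_kj_k})=d(w)$, so $\dep(w)\leq d(w)$ for indecomposable $w$. For arbitrary $w$ the Algorithm is run on each indecomposable block within its own window, and both the total reflection cost and $d(w)$ break up as the corresponding sum of per-block contributions, so the inequality persists.

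The step I expect to be the real work is verifying that these four lemmas can genuinely be chained along an actual execution of the Algorithm. One must check that Step 1 never slides a positive entry to the left of its natural position, so that the contribution of each entry to the cost is exactly $w(m)-m$; that after each Step 1 phase the configuration is in precisely the shape demanded by the next unsigning move, which is the content of Lemma~\ref{lem:Bshuffleresult} and relies on indecomposability of the relevant leftmost sub-block; and that the intermediate permutations, although they may become decomposable mid-run, only ever split off trivial blocks of already-sorted positive entries on the right, so that the appeals to indecomposability in Lemma~\ref{lem:Bshuffleresult} and the detection of the $\nneg=1$ case in Step 2 both remain legitimate. Granting these compatibility checks, together with termination (which follows from the induction on $\nneg$ and finiteness of each shuffle phase), the proposition is immediate.
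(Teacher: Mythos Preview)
Your proposal is correct and follows essentially the same approach as the paper: run the Algorithm to sort $w$ to the identity, use Lemmas~\ref{lem:typeBstep1cost}--\ref{lem:Bsingleunsigncost} to show that the total cost of the reflections equals $d(w)$, and conclude $\dep(w)\leq d(w)$ since depth is a minimum over all reflection factorizations; then handle decomposable $w$ by additivity over blocks. Your write-up is in fact more explicit than the paper's own one-paragraph summary about the induction scheme (on the number of Step~2 moves, equivalently on $\nneg$) and about the compatibility checks needed to chain the lemmas, but the underlying argument is the same.
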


\subsection{Proofs of Theorem~\ref{thm:depthB} and subsequent corollaries.}

To prove that $d(w)\leq\dep(w)$ it is sufficient to prove the following lemma.

\begin{lem}\label{l:DeltaB} For any element $w \in B_n$ and any reflection $t\in T^B$,
$$d(w)-\dep(t) \leq d(wt).$$
\end{lem}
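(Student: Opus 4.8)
The plan is to prove the inequality $d(w)-\dep(t)\le d(wt)$ by cases on the type of reflection $t$, handling each of the three families $t_{ij}$, $t_{\bar ij}$, $t_{\bar ii}$ separately, and within each family tracking exactly how each of the three summands of $d$ — the positive-exceedance sum $P(w):=\sum_{w(i)>i}(w(i)-i)$, the negative-entry sum $N(w):=\sum_{i\in\Neg(w)}|w(i)|$, and the oddness/negativity correction $\frac{o^B(w)-\nneg(w)}{2}$ — changes when passing from $w$ to $wt$. The recurring idea is: a single reflection moves at most two entries (up to signs), so the change in $P$ and in $N$ is bounded by how far those two entries move and by whether their signs flip; then one checks that $\dep(t)$ (known explicitly from Lemma~\ref{lem:typeBdepthreflections} as $j-i$, $i+j-1$, or $i$) is always large enough to absorb the worst case, after also accounting for the correction term via the changes in $o^B$ and $\nneg$.

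Concretely, for $t=t_{ij}$ (so $\dep(t)=j-i$): here $wt$ is obtained from $w$ by swapping the two entries $w(i)$ and $w(j)$ with their signs, so $\nneg$ and the signs of entries are unchanged, and $o^B$ can change by a bounded amount; the main task is to show $P(wt)\ge P(w)-(j-i)$ possibly with help from the $o^B$ term. For $t=t_{\bar ij}$ (so $\dep(t)=i+j-1$): now the two entries in positions $i,j$ are swapped and \emph{both} signs flipped, so $\nneg$ changes by $0$ or $\pm2$, $o^B$ changes boundedly, and each of $P$, $N$ can change; the generous value $i+j-1\ge |w(i)|+|w(j)|-1$-type bounds (using that entries have absolute value at most $n$ and are constrained by their positions) should cover it. For $t=t_{\bar ii}$ (so $\dep(t)=i$): a single sign flip at position $i$ changes $\nneg$ by $\pm1$, changes $N$ by $\mp|w(i)|$, possibly changes $P$ (since flipping a negative entry to positive can create an exceedance of size at most $|w(i)|-i$), and changes $o^B$ by an odd-parity toggle of one block; balancing $|w(i)|-i$ against the $\frac{o^B-\nneg}{2}$ change against $\dep(t)=i$ is the delicate bookkeeping.

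A cleaner way to organize all this is to exploit the algorithm and Proposition~\ref{prop:upperboundB}: we already know $d$ is exactly the cost the algorithm pays, and the algorithm only ever uses reflections $t_{ij}$ with $w(i)>w(j)$ in controlled ways. But for this lemma — which must hold for \emph{every} reflection $t$, not just the ones the algorithm chooses — a direct case analysis seems unavoidable. Within each case I would first reduce to the indecomposable situation (or rather, note that $t$ either acts inside a single block of $w$ or merges/rearranges blocks in a way that changes $o^B$ predictably), then write $d(wt)-d(w)=\Delta P+\Delta N+\tfrac12(\Delta o^B-\Delta\nneg)$ and bound each piece. The inequality to be proven is one-directional and somewhat loose, so crude bounds like $\Delta P\ge -(j-i)$, $\Delta N\ge -|w(i)|$, $\Delta o^B\ge -2$ should suffice once assembled correctly; the point is to choose the bounds consistently across the subcases.

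The main obstacle I expect is the interaction between the $P$-term and the oddness correction in the sign-changing cases, especially $t=t_{\bar ij}$ when the swap also reorders blocks: flipping signs can simultaneously destroy exceedances (good for the inequality) while changing $o^B$ downward by as much as $2$ (bad, since it contributes $-1$ to $d(wt)$), and one must verify these cannot conspire to overwhelm $\dep(t)=i+j-1$. I would handle this by showing that whenever $o^B$ drops by $2$, two blocks with an odd number of negative entries were merged, which forces the relevant entries to be positioned so that $\Delta P$ or $\Delta N$ is correspondingly large; making that ``forcing'' precise is the technical heart of the argument. A secondary nuisance is that a pair $(i,j)$ can contribute to both $\Inv$ and $\Nsp$, but since $d$ is stated in terms of window-notation sums rather than inversions, this double-counting issue does not directly intrude — though it is worth keeping in mind when sanity-checking against the length formula~\eqref{eq:typeBlength}.
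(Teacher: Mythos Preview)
Your overall approach --- case analysis on the reflection type, tracking the three summands $P$, $N$, and the correction $\tfrac12(o^B-\nneg)$ --- is exactly what the paper does (they call these $A$, $B$, $C$). But one of your ``crude bounds'' is wrong and would derail the argument: you write $\Delta o^B\ge -2$, i.e.\ that the oddness can drop by at most~$2$. This is false. For $t=t_{ij}$, if $w(i)<i$ then $t$ can merge up to $j-i+1$ distinct B-blocks (namely when each position between $i$ and $j$ is its own singleton block), and if all of them are odd while the merged block is even, $o^B$ drops by $j-i+1$. A constant bound on $\Delta o^B$ simply does not exist.

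What makes the lemma work is a \emph{trade-off}: large drops in $o^B$ are only possible in situations where $\Delta P$ is correspondingly small (or zero), and vice versa. The paper captures this by subcasing on the position of $w(i)$ relative to $i$ and $j$. For example, in the $t=t_{ij}$ case: if $w(i)<i$ then $\Delta P\le 0$ but $o^B$ can drop by $j-i+1$, giving $\Delta d\le\tfrac{j-i+1}{2}\le j-i$; if $i\le w(i)<j$ then $\Delta P\le w(i)-i$ but now $i$ and $w(i)$ are already in the same block, so $o^B$ can drop by at most $j-w(i)+1$, giving $\Delta d\le (w(i)-i)+\tfrac{j-w(i)+1}{2}\le j-i$; if $w(i)\ge j$ then blocks can only split, so $o^B$ cannot decrease, and one bounds $\Delta P$ directly. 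The same style of trade-off (with further subcases on $|w(j)|$ versus $i$) handles $t_{\bar ij}$. Your final paragraph gestures at this compensation, but framing it as ``whenever $o^B$ drops by $2$'' shows you have not yet seen that the drop can be of order $j-i$; that realization, and the resulting subcasing on $w(i)$ versus $i$ and $j$, is the missing technical content.
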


We now prove Theorem~\ref{thm:depthB} assuming this lemma.

\begin{proof}[Proof of Theorem~\ref{thm:depthB}]
By Proposition~\ref{prop:upperboundB}, $d(w)\geq\dep(w)$.  We now prove $d(w)\leq\dep(w)$ by induction on $\dep(w)$.  If $\dep(w)=0$, then $w=e$, and $d(e)=0$.  Otherwise, there exists a reflection $t\in T^B$ such that $\dep(w)-\dep(t)=\dep(wt)$.  By the inductive hypothesis, $\dep(wt)=d(wt)$.  Now, by Lemma~\ref{l:DeltaB}, $d(w)-\dep(t)\leq d(wt)=\dep(wt)=\dep(w)-d(t)$.  Hence $d(w)\leq\dep(w)$ as desired.
\end{proof}

Now we prove the lemma in cases for each type of reflection $t$.

\begin{proof}[Proof of Lemma \ref{l:DeltaB}]
Let us denote the three terms in Equation \eqref{eq:d_B} by $A, B$, and $C$ respectively. It will be convenient to let $\Delta d:=d(w)-d(wt)$ and $\Delta A, \Delta B$, and $\Delta C$ be the analogous differences. The claim can be proved by analyzing each type of reflection and the entries in the positions the reflection acts on. 

\noindent {\bf Case 1:}  $t=t_{ij}$, ($\dep(t)=j-i$). Clearly $\Delta B=0$, and $\nneg(w)=\nneg(wt)$.

\noindent a) $w(i)<i$.

The most change that $t$ can do to the block structure of $w$ is to join $i$ and $j$ into a single block, with every entry in between being a different singleton block.  This turns $j-i+1$ blocks into a single block.  Potentially, all of these blocks can be odd, and they are all fused into a single even block, so $o^B(w)$ can decrease by at most $j-i+1$.  Since no other terms of $d$ decrease, $d(w)-d(wt)\leq \frac{j-i+1}{2}\leq j-i$ since $j-i\geq 1$.  (Since we could have $w(j)>i$, the first term can change, but only to increase $d$.)
\smallskip

\noindent b) $i \leq w(i) <j$.

Note that $w(i)$ and $i$ are in the same block, so the most change $t$ can do to the block structure is to join $w(i)$ and $j$ into a single block, which decreases $o^B(w)$ by at most $j-w(i)+1$.  Furthermore, $$\Delta A =  w(i)-i + \left \{\begin{array}{ll} (w(j)-j)-(w(j)-i), & {\rm if} \ w(j)\geq j \\
- {\rm max} \{0,w(j)-i\}, & {\rm if} \ w(j)<j \end{array}.\right.$$ In both cases, $\Delta A \leq w(i)-i$.  Hence $\Delta d \leq w(i)-i+\frac{j-w(i)+1}{2}\leq j-i$ since $j-w(i)\geq 1$.
\smallskip

\noindent c)  $j\leq w(i)$.

In this case, blocks can only be split up and not merged, so $o^B(w)$ cannot decrease. Moreover $\Delta A=0$ unless $w(j)\leq j$, in which case $\Delta A =(w(i)-i)-(w(i)-j)-\max\{0,w(j)-i\} \leq j-i$.

\noindent {\bf Case 2:}  $t=t_{\bar{i}j}$, and $w(i)<0$ and $w(j)<0$ ($\dep(t)=i+j-1$). In this case $\Delta B= |w(i)|+|w(j)|$. Moreover, $\nneg(w)-\nneg(wt)=2$.

\noindent a) $|w(i)|<i$.

Suppose first that $|w(j)|\geq i$.  In this case $\Delta A=-(|w(j)|-i)$,  and $\Delta C\leq \frac{(j-i+1)-2}{2}.$ Hence
$\Delta d \leq i +|w(i)| +  \frac{j-i-1}{2}\leq \frac{3i+j-3}{2}\leq i+j-1$ since $|w(i)|\leq i-1$.

If $|w(j)|< i$, then $\Delta A=0$, and $\Delta C=-1$; hence $\Delta d=|w(i)|+|w(j)|-1\leq i+j-1.$
\smallskip

\noindent b) $i \leq |w(i)| <j$.

Suppose first that $|w(j)|\geq i$. In this case $\Delta A=-(|w(j)|-i)$,  and $\Delta C\leq \frac{(j-|w(i)|+1)-2}{2}.$ Hence
$\Delta d\leq i+|w(i)|+\frac{j-|w(i)|-1}{2}\leq \frac{2i+2j-2}{2}$ since $|w(i)| \leq j-1$.

If $|w(j)|< i$, then $A$ does not change, $\Delta C \leq -1$, and $\Delta d\leq i+j-1.$
\smallskip

\noindent c) $j\leq |w(i)|$.

Suppose first that $|w(j)| \geq i$.  In this case  $\Delta A \leq-(|w(i)|-j) - (|w(j)|-i)$, and $o^B(wt)\geq o^B(w)$, so $\Delta C\leq -1.$ Hence 
$\Delta d \leq i+j-1$. If  $|w(j)| < i$, then $\Delta A \leq-(|w(i)|-j)$ and $\Delta d \leq j +|w(j)|-1\leq i+j-1.$
\smallskip

\noindent {\bf Case 2':}  $t=t_{\bar{i}j}$, and $w(i)>0$ and $w(j)>0$.

\noindent a) $w(i)<i$.

In this case $\Delta A$ and $\Delta B$ are negative, and $\Delta C\leq \frac{j-i+3}{2}$.  Hence $\Delta d \leq i+j-1$.
\smallskip

\noindent b) $i \leq w(i) <j$.

In this case $\Delta A \leq (w(i)-i)+{\rm max}\{0, (w(j)-j)\}$, $\Delta B= -w(i)-w(j)$, and $\Delta C\leq \frac{j-w(i)+3}{2}$, and clearly
$\Delta d \leq i+ j-1$.

\noindent c) $j\leq w(i)$.

In this case $\Delta d$ is negative.

\noindent {\bf Case 2'':}  $t=t_{\bar{i}j}$, and $w(i)<0$ and $w(j)>0$.

\noindent a) and b) $|w(i)|<i$ or $i \leq |w(i)| <j$

In this case $\Delta A ={\rm max} \{0,w(j)-j\}$, $\Delta B=|w(i)|-w(j)$, and $\Delta C\leq \frac{j-i+1}{2}$. Hence $\Delta d \leq i+j-1$.
\smallskip

\noindent c) $j\leq w(i)$.

In this case, if $w(j)\geq j$, then  $\Delta A=w(j)-|w(i)|$,  $\Delta B=|w(i)|-w(j)$, and $\Delta C\leq \frac{j-i+1}{2}$. Hence $\Delta d \leq i+j-1$.

The symmetric case $w(i)>0$ and $w(j)<0$ is similar.

\noindent {\bf Case 3:}  $t=t_{\bar{i}i}$ and $w(i)<0$, ($\dep(t)=i$). In this case $\Delta B=|w(i)|$, and the block structure does not change.

\noindent a) $|w(i)|<i$

Clearly $A$ does not change, and $\Delta d = |w(i)| -1/2 \leq i-1/2 < i.$

\noindent b) $|w(i)|\geq i$

In this case $\Delta d < i$ since $\Delta A=-|w(i)|$ and $i\geq 1$.

\noindent {\bf Case 3'':}  $t=t_{\bar{i}i}$ and $w(i)>0$.  In this case $\Delta B=-w(i)$, and the block structure does not change.

\noindent a) $w(i)<i$

Clearly  $\Delta d$ is negative

\noindent b) $w(i)\geq i$

In this case $\Delta d = w(i)-i -w(i) +1/2 \leq i$, which is still negative.
\end{proof}

Now we prove Corollary~\ref{cor:depthBalt}.

\begin{proof}[Proof of Corollary~\ref{cor:depthBalt}]
Let $w\in B_n$.  First note that
$$\sumlim_{i=1}^n (i-|w(i)|)=0,$$
or equivalently
$$\sumlim_{w(i)>0} (i-w(i)) + \sumlim_{w(i)<0} (i+w(i)) = 0.$$
Therefore,
$$\sumlim_{w(i)>0} (i-w(i)) + \sumlim_{w(i)<0} (i-w(i)) = -2\sumlim_{w(i)<0} w(i) = 2\sumlim_{w(i)<0} |w(i)|,$$
and
$$\frac{1}{2}\sumlim_{i=1}^n (i-w(i))=\sumlim_{w(i)<0} |w(i)|.$$

Now add $\sumlim_{i>0, w(i)>i} (w(i)-i))$ to both sides.  We have
$$\frac{1}{2}\left(\sumlim_{i>0, w(i)>i} (w(i)-i)+\sumlim_{i>0, w(i)<i} (i-w(i))\right) = \sumlim_{i>0, w(i)>i} (w(i)-i) + \sumlim_{i>0, w(i)<0} |w(i)|,$$
or
$$\frac{1}{2}\left(\sumlim_{i>0, w(i)>i} (w(i)-i)+\sumlim_{i<0, w(i)>i} (w(i)-i)\right) = \sumlim_{i>0, w(i)>i} (w(i)-i) + \sumlim_{i>0, w(i)<0} |w(i)|,$$
which shows the equality of the formula in Corollary~\ref{cor:depthBalt} with that of Theorem~\ref{thm:depthB}.
\end{proof}

Now we prove Corollary~\ref{cor:maxdepthB}.

\begin{proof}[Proof of Corollary~\ref{cor:maxdepthB}]
Let $i \in [n]$ and consider the contribution of $w(i)$ to $\dep(w)$ in Theorem~\ref{thm:depthB}. If $w(i)>0$, then $w(i)$ contributes at most $w(i)-1$ to the first summand. On the other hand, if $w(i)<0$, then it always contributes $|w(i)|$ to the second summand and $-\frac{1}{2}$ to the third, for a total contribution of $|w(i)|-\frac{1}{2}$.  In either case, $w(i)$ contributes at most an additional $\frac{1}{2}$ to the third summand (if $w(i)<0$ is a block by itself).  Hence, the greatest contribution is made if all entries are negative and each constitutes a full block. 
\end{proof}

\subsection{Reducedness of factorization}

Let $w \in B_n$ be indecomposable. Use the algorithm to write $w$ as a product of reflections $w=t_1 \cdots t_r$ realizing the depth of $w$.  Then, for each $k \in [r]$, depending on whether $t_k$ is equal to  $t_{ij}$, $t_{\bar{i} i}$, or $t_{\bar{i} j}$, replace $t_k$ by the following reduced expression:
\begin{eqnarray*}
t_{ij}& {\rm by} & s_{j-1} \cdots s_{i+1} s_{i} s_{i+1} \cdots s_{j-1}, \\
t_{\bar{i} i}& {\rm by} & s_{i-1}\cdots s_1  s_0^B s_1 \cdots s_{i-1}, \ {\rm and} \\
t_{\bar{i} j}& {\rm by} & s_{i-1}\cdots s_1  s_{j-1} \cdots s_2 s_0^B s_1 s_0^B s_2 \cdots s_{j-1}s_1 s_2 \cdots s_{i-1}.
\end{eqnarray*}

To prove Theorem~\ref{thm:typeBred}, we show the following.

\begin{thm}\label{reduced}
The decomposition of $w=t_1\cdots t_r$ given by the algorithm, where every $t_i$ is replaced as explained above, is reduced. In particular our algorithm defines a chain from the identity to $w$ in the right weak order of $W$.
\end{thm}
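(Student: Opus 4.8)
The plan is to show that the concatenated word for $w = t_1 \cdots t_r$ is reduced by showing that each successive right multiplication by a reflection $t_k$ is a reduced factorization, i.e.\ $\ell_S(w_{k-1} t_k) = \ell_S(w_{k-1}) + \ell_S(t_k)$ where $w_0 = e$ and $w_k = w_{k-1} t_k^{-1}$ (reading the algorithm as sorting $w$ to the identity, so that the reflections applied in the algorithm are the $t_i$ in reverse order, and at each stage the ``current'' permutation is some $w_{k-1}$). Granting this for every $k$, the stated reduced expressions for the individual $t_k$ (listed just before the theorem) combine to a reduced expression for $w$, because for each $k$ the length adds up exactly, so the total word has length $\sum_k \ell_S(t_k) = \ell_S(w)$; since every reduced expression arises from a saturated chain in right weak order, this also yields the claimed chain from $e$ to $w$. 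So the whole theorem reduces to: \emph{every reflection applied by the algorithm produces a reduced factorization}, which is exactly checkable against the three criteria of Lemma~\ref{lem:typeBreflprodred}.

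\textbf{Key steps, in order.} First I would set up the bookkeeping: fix an indecomposable $w$, run the algorithm, and let $u$ denote the current signed permutation at an arbitrary moment, with $t$ the reflection about to be applied. There are exactly three kinds of moves: (i) a Step~1 shuffle $t_{ij}$; (ii) a Step~2 double unsigning $t_{\bar i j}$; (iii) a Step~2 single unsigning $t_{\bar i i}$. For each, I would verify the corresponding clause of Lemma~\ref{lem:typeBreflprodred} applied to $ut$ (note the lemma is stated with $w = vt$ and conditions on the entries of $w$; here the role of ``$w$'' is played by the permutation \emph{after} the move, so I must track how the move rearranges entries). For (i): when the algorithm picks $i$ with $u(i) = \max\{u(\ell) : u(\ell) > \ell\}$ and then $j$ minimizing $u(j)$ over $i < j \le u(i)$, I need that after the swap, the entry now in position $j$ (namely the old $u(i)$) exceeds the entry now in position $i$ (the old $u(j)$), and that every entry strictly between positions $i$ and $j$ lies strictly between them in value — this is precisely the content already extracted in the proof of Lemma~\ref{lem:typeBstep1cost} (no positive entry is pushed left of its natural position, and $u(i)$ is the unique largest ``exceedance value''), so clause (1) holds. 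For (iii): by the Observation, when $\nneg(u) = 1$ the lone negative entry sits in position $1$, so clause (3) is vacuous except ``$w(i) < 0$'', which holds. For (ii): this is where Lemma~\ref{lem:Bshuffleresult} does the work — it guarantees $|u(i)| \ge j$ and $|u(j)| \ge i$ for the two most-negative entries, and I need to deduce the four sub-conditions of clause (2), namely that for $k < i$ one has $u(k) > u(i)$ and $u(k) + u(j) < 0$, and for $k' \ne i$, $k' < j$ one has $u(k') > u(j)$ and $u(i) + u(k') < 0$; the first two follow because $u(i), u(j)$ are the two smallest entries of $u$ (Step~1 having been run to completion, all larger-modulus entries are positive and in natural position by the Observation) and from $|u(j)| \ge i > k$, $|u(i)| \ge j > k'$.

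\textbf{Expected main obstacle.} The delicate point is Case (ii), the double-unsigning reflection $t_{\bar i j}$: clause (2) of Lemma~\ref{lem:typeBreflprodred} has four simultaneous inequalities mixing ``larger-than'' conditions and ``sum-negative'' conditions, over two ranges of indices with an excluded index, and I must extract all of them from the single structural input ``$u(i), u(j)$ are the two entries of largest absolute value and $|u(i)| \ge j$, $|u(j)| \ge i$'' (Lemma~\ref{lem:Bshuffleresult}) together with the Observation that after Step~1 every positive entry is at or right of its natural position. The crossing conditions $u(k) + u(j) < 0$ and $u(i) + u(k') < 0$ are the subtle ones: for $k < i \le j \le |u(i)|$, I want $u(k) < |u(j)|$, which holds because $u(k)$ is at a position $< i \le |u(j)|$ and, being positive and at-or-right of its natural position, satisfies $u(k) \le k < i \le |u(j)|$; similarly $u(k') \le k' < j \le |u(i)|$ for $k' < j$. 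I expect this case to require the most care to write cleanly, but no new ideas beyond the lemmas already proved. The other two cases are essentially immediate. I would also include one sentence noting the decomposable case follows since the algorithm acts blockwise and concatenating reduced expressions for the blocks (which act on disjoint coordinate ranges) stays reduced.
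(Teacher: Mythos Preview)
Your approach is essentially identical to the paper's: verify case-by-case that each reflection $t$ the algorithm applies to the current permutation $u$ satisfies the relevant clause of Lemma~\ref{lem:typeBreflprodred}, so that $\ell_S(u)=\ell_S(ut)+\ell_S(t)$ at every step; the paper's proof is exactly this three-case check.

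One bookkeeping slip to fix: in Lemma~\ref{lem:typeBreflprodred} the hypotheses are stated on the \emph{longer} element $w$ of the pair $w=vt$. In the sorting direction that is the current permutation $u$ \emph{before} the move, not $ut$ as your parenthetical claims. Your actual verifications in cases (ii) and (iii) already check conditions on $u$, and your case-(i) condition ``after the swap, the entry in position $j$ exceeds the entry in position $i$'' unwinds to $u(i)>u(j)$, which is precisely Lemma~\ref{lem:typeBreflprodred}(1) applied to $u$---so the substance is correct, but you should rewrite the framing so that you are explicitly applying the lemma to $u$. (Also, in case (ii) you silently assume $u(k)>0$ when bounding $u(k)\le k$; add the one-line remark that if $u(k)<0$ the sum conditions are trivial.)
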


\begin{proof}
It is sufficient to show that in each step of the algorithm 
$$\ell_S(wt)=\ell_S(w)-\ell_S(t).$$
We discuss the following three cases.

\noindent {\bf Step 1:} The reflections applied in Step 1 are of the form $t=t_{ij}$, where $w(i)$ is the maximal positive entry in $w$ and $w(j)$ is the minimal entry among those with $i<j<w(i)$.  Hence, for any $i<k<j$, one has $w(i)>w(k)>w(j)$.  Therefore, the result follows by Lemma~\ref{lem:typeBreflprodred}(1).

\noindent {\bf Step 2, Case A:} In Step 2, we either apply $t=t_{\bar{i}j}$ or $t_{\bar{i}i}$.  If we apply $t=t_{\bar{i}j}$, then, for any $k\in[j-1]\setminus\{i\}$, we have $|w(k)|<{\rm min} \{|w(i)|,|w(j)|\}$. Hence the result follows by Lemma~\ref{lem:typeBreflprodred}(2).

\noindent {\bf Step 2, Case B:} If we apply $t=t_{\bar{i}i}$ in Step 2, then $w(i)$ is the first entry in a indecomposable block $w^p$ of $w$, as argued in the proof of Lemma~\ref{lem:Bsingleunsigncost}, and $w(i)<0$. All other entries in $w^p$ are positive, and all entries in the previous blocks of $w$ are smaller in absolute value then $|w(i)|$. Hence the result follows from Lemma~\ref{lem:typeBreflprodred}(3).
\end{proof}

\section{Realizing depth in type D}\label{sec:typeDalg}

\subsection{Reflections, length, and depth in type D}  For the reader's convenience we state here the basic facts on reflections, length, and depth for the Coxeter system $(D_n, S_D)$.  The facts about reflections and length can be found in~\cite[\S 8.2]{BBbook}.

In the group $D_n$ the set of reflections is given by
$$T^D:=\{t_{ij}, t_{\bar{i}j}   \mid 1 \leq i < j \leq n\},$$
where $t_{ij}=(i,j)(\bar{i},\bar{j})$ and  $t_{\bar{i}j}=(\bar{i},j)(i,\bar{j})$ in cycle notation. In particular there are $n^2-n$ reflections in $D_n$.  These reflections act on signed permutations in window notation just as they do in type B.

The length of a signed permutation $w\in D_n$ is given by
\begin{equation}\label{eq:typeDlength}
\ell_S(w)=\inv(w)+\nsp(w),
\end{equation}
where $\inv(w)$ and $\nsp(w)$ are defined as in type B.  This means the only difference is that $\nneg(w)$ contributes to length in type B but not in type D.  As in type B, a pair $(i,j)$ may appear in both $\Inv(w)$ and $\Nsp(w)$, in which case this pair is counted twice in calculating $\ell_S(w)$.

From Equations (\ref{eq:depthreflection}) and (\ref{eq:typeDlength}) we immediately obtain the depths of the two types of reflections

\begin{lem}\label{lem:typeDdepthreflections}
The depths of reflections in type D are as follows.
$$\dep(t_{ij})=j-i \quad and \quad \dep(t_{\bar{i}j})=i+j-2.$$
\end{lem}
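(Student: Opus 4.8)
The plan is to combine the two equations cited just before the statement. By Equation~\eqref{eq:depthreflection}, every reflection $t=t_\beta$ satisfies $\dep(t)=(\ell_S(t)+1)/2$, so it suffices to compute $\ell_S(t_{ij})$ and $\ell_S(t_{\bar i j})$ and then halve the result after adding $1$. For this I would apply the type D length formula~\eqref{eq:typeDlength}, $\ell_S(w)=\inv(w)+\nsp(w)$, to the explicit window notations of the two kinds of reflection: $t_{ij}$ has $j$ in position $i$, $i$ in position $j$, and every other entry fixed, while $t_{\bar i j}$ has $\overline{j}$ in position $i$, $\overline{i}$ in position $j$, and every other entry fixed.

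For $t_{ij}$, I would first note that $\nsp(t_{ij})=0$ since all entries are positive, and then count inversions directly: the pairs $(i,k)$ with $i<k\le j$ all contribute, because $w(i)=j$ exceeds $w(k)$ for each of them ($j-i$ pairs), and the pairs $(k,j)$ with $i<k<j$ also contribute, because $w(j)=i$ is below $w(k)=k$ ($j-i-1$ pairs); no other pair is an inversion. Hence $\ell_S(t_{ij})=2(j-i)-1$ and $\dep(t_{ij})=j-i$.

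For $t_{\bar i j}$ the same approach works but the bookkeeping is heavier, and that is the step most prone to error: one must partition the index pairs according to whether they involve positions $i$, $j$, both, or neither, and remember that a pair lying in both $\Inv(w)$ and $\Nsp(w)$ is counted once in each summand of~\eqref{eq:typeDlength}. Counting inversions, the contributing pairs are $(a,i)$ and $(a,j)$ for $a<i$ (giving $2(i-1)$, since $\overline{j}$ and $\overline{i}$ are below every $w(a)=a$) together with $(a,j)$ for $i<a<j$ (giving $j-i-1$, since $w(j)=\overline{i}$ is negative); all other pairs fail, so $\inv(t_{\bar i j})=i+j-3$. Counting negative-sum pairs, the ones through position $i$ are $(a,i)$ for $a<i$, the pairs $(i,b)$ for $i<b<j$, and $(i,j)$, totalling $j-1$; the remaining ones are $(a,j)$ for $a<i$, totalling $i-1$; pairs with $a$ strictly between $i$ and $j$, or with either index beyond $j$, give a positive sum. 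Thus $\nsp(t_{\bar i j})=i+j-2$, so $\ell_S(t_{\bar i j})=2(i+j)-5$ and $\dep(t_{\bar i j})=i+j-2$.

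As a consistency check one can compare with Lemma~\ref{lem:typeBdepthreflections}: the reflections act by the same window notation in type B and type D, so $\inv$ and $\nsp$ agree; the only difference is that $\nneg(t_{\bar i j})=2$ is counted toward length in type B but not in type D, which drops the length by $2$ and hence the depth by $1$, sending $i+j-1$ to $i+j-2$, while $\nneg(t_{ij})=0$ leaves $j-i$ unchanged. I do not expect any genuine obstacle here beyond the careful enumeration of pairs in the $t_{\bar i j}$ case.
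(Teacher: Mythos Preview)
Your proposal is correct and follows exactly the approach indicated in the paper, which simply states that the lemma follows from Equations~\eqref{eq:depthreflection} and~\eqref{eq:typeDlength} without writing out the inversion and negative-sum-pair counts. Your explicit enumeration (including the consistency check against Lemma~\ref{lem:typeBdepthreflections}) is accurate and merely fills in details the paper omits.
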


The rules for determining if $w=vt$ is a reduced factorization are the same as in type B, with one minor change.

\begin{lem}\label{lem:typeDreflprodred}
Let $v, w\in D_n$, $t\in T^D$, and $w=vt$.  Then $\ell_S(w)=\ell_S(v)+\ell_S(t)$ if and only if one of the following hold:
\begin{enumerate}
\item $t=t_{ij}$, $w(i)>w(j)$, and for all $k$ with $i<k<j$, $w(i)>w(k)>w(j)$.
\item $t=t_{\bar{i}j}$, $w(i)+w(j)<0$, for all $k$ with $1\leq k<i$, $w(k)>w(i)$ and $w(k)+w(j)<0$, and for all $k'\neq i$ with $1\leq k'<j$ , $w(i)+w(k')\leq 0$ and $w(k')>w(j)$.
\end{enumerate}
\end{lem}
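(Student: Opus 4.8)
The statement to prove is Lemma~\ref{lem:typeDreflprodred}, characterizing when $w=vt$ is a reduced factorization in type D.

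\medskip

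The plan is to mirror the proof of Lemma~\ref{lem:typeBreflprodred}: write each reflection $t \in T^D$ as an explicit product of simple reflections, and verify that each simple reflection in the word reduces length when read from the appropriate side if and only if the stated conditions hold. As before, the trivial inequality $\ell_S(w) \leq \ell_S(v) + \ell_S(t)$ always holds, so it suffices to determine exactly when equality fails. The key mechanical fact is that, given a reduced word $t = s_{i_1}\cdots s_{i_r}$, the factorization $w = (wt)t$ is reduced precisely when $\ell_S(w s_{i_1}\cdots s_{i_k}) < \ell_S(w s_{i_1}\cdots s_{i_{k-1}})$ for every $k$; since $t$ is an involution, multiplying $w$ on the right by the letters of $t$ in order should strictly decrease length at every step.

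\medskip

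First I would handle case (1), $t = t_{ij}$. The decomposition $t_{ij} = s_{j-1}s_{j-2}\cdots s_i s_{i+1}\cdots s_{j-1}$ is literally the same as in type B and involves none of $s_0^D$, so the analysis is unchanged: each appearance of $s_k$ swaps $w(k)$ with $w(i)$ or $w(j)$, and this shortens at every step exactly when $w(i) > w(k) > w(j)$ for all $i<k<j$ together with $w(i)>w(j)$. (One should note that since $\ell_S$ in type D counts only $\inv + \nsp$ and not $\nneg$, and since $t_{ij}$ preserves the multiset of signed values in those two positions, the $\nsp$ contributions also behave as expected; this is the same bookkeeping as type B and needs only a remark.) The substantive work is case (2), $t = t_{\bar i j}$. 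I would use the type D analogue of the type B decomposition, replacing the substring $s_1 s_0^B s_1 s_0^B s_1$ by the appropriate word in $s_0^D$ and $s_1$ (recalling $s_0^D = [\bar 2, \bar 1, 3, \ldots, n]$), yielding something of the form
$$t_{\bar i j} = s_{i-1}\cdots s_2 s_{j-1}\cdots s_3 \, s_0^D s_1 \, s_2 s_0^D \, \cdots$$
up to reindexing — I would pin down the exact word by checking its action on window notation agrees with $t_{\bar i j}$. Then, reading the letters from the left, the initial block $s_{i-1}\cdots$ reduces length iff $w(k) > w(i)$ for $1 \le k < i$; the block moving $w(j)$ forward reduces length iff $w(k) > w(j)$ for the relevant $k$; the core $s_0^D$-block reduces length iff $w(i) + w(j) < 0$ (this replaces the type B condition $w(i), w(j) < 0$, and is the one genuine change in the lemma, hence the $\le 0$ vs $< 0$ asymmetry stated); and the two trailing blocks, which move $-w(i)$ to position $j$ and $-w(j)$ to position $i$, reduce length iff $-w(i) > w(k)$ (i.e. $w(i) + w(k) \le 0$) and $-w(j) > w(k)$ (i.e. $w(j) + w(k) < 0$) respectively. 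Collecting these gives exactly the four stated conditions.

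\medskip

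The main obstacle is getting the $s_0^D$-core right: unlike $s_0^B$ which flips a single sign, $s_0^D$ simultaneously involves the first two positions, so the bookkeeping of which entries are negative and how $\nsp$ changes across that core is more delicate, and it is precisely here that the condition weakens from ``both $w(i)$ and $w(j)$ negative'' to ``$w(i) + w(j) < 0$'' and produces the asymmetric $\le$/$<$ in the conclusion. I would verify the core segment directly on small window-notation examples (and then in general) to make sure the sign pattern is tracked correctly, paying attention to the boundary subtlety that when exactly one of $w(i), w(j)$ is negative the product is still a valid element of $D_n$ at each intermediate stage only because $s_0^D$ flips two signs at once. Everything else is a routine case analysis paralleling the type B proof, and I would present it at the same level of detail — explicit word, then a sentence per segment — rather than grinding through every subcase.
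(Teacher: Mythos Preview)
Your approach is exactly the paper's: write $t$ as an explicit reduced word and check that each simple reflection shortens length step by step, referring back to the type B case for everything except the central block. The one place where your proposal needs correction is the reduced word for $t_{\bar i j}$: the type B central substring is $s_0^B s_1 s_0^B$ (not $s_1 s_0^B s_1 s_0^B s_1$), and in type D it is replaced by a \emph{single} $s_0^D$, giving
\[
t_{\bar i j}=s_{i-1}\cdots s_1\, s_{j-1}\cdots s_2\, s_0^D\, s_2\cdots s_{j-1}\, s_1\cdots s_{i-1},
\]
so there is only one $s_0^D$ and the outer segments go down to $s_1$ and $s_2$ as in type B. With that correction your segment-by-segment analysis goes through verbatim, and the replacement of $s_0^B s_1 s_0^B$ by $s_0^D$ is precisely what turns the two conditions $w(i)<0$, $w(j)<0$ into the single condition $w(i)+w(j)<0$. (Your remark about the $\le$ versus $<$ asymmetry is harmless: since $|w(i)|\neq |w(k')|$ for $k'\neq i$, the sum $w(i)+w(k')$ is never zero, so the two are equivalent.)
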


\begin{proof}

As in the proof of Lemma~\ref{lem:typeBreflprodred}, we write each $t$ as a product of simple reflections $t=s_{i_1} \cdots s_{i_r}$ and show that each of the conditions of the lemma corresponds to the assertion that for each $k \in [r]$ one has $\ell_S(ws_{i_1} \cdots s_{i_{k-1}} s_{i_k})<\ell_S(ws_{i_1} \cdots s_{i_{k-1}})$. 

\begin{enumerate}
\item If $t=t_{ij}$, then the proof is exactly as for the analogous case in Lemma~\ref{lem:typeBreflprodred}.

\item If $t=t_{\bar{i}j}$ then its decomposition into simple reflections is $$t=s_{i-1}\cdots s_1  s_{j-1} \cdots s_2 s_0^D s_2 \cdots s_{j-1}s_1 s_2 \cdots s_{i-1}.$$ 

The proof here is the same as for Lemma~\ref{lem:typeBreflprodred}, except the central $s_0^B s_1 s_0^B$ has become an $s_0^D$, which causes the conditions $w(i),w(j)<0$ to be replaced with the condition $w(i)+w(j)<0$.
\end{enumerate}
\end{proof}

\subsection{Algorithm for realizing depth in type D}

As before, we let $d(w)$ denote our expected value of $\dep(w)$ according to our formula.  Hence, for $w\in D_n$, let
\begin{equation*}
d(w):=\sum\limits_{\{i \in [n] \mid w(i)>i\}}{(w(i)-i)} + \sumlim_{i \in \Neg(w)}{|w(i)|}+o^D(w)-\nneg(w).
\end{equation*}

In order to prove Theorem \ref{thm:depthD}, we proceed in two steps. First, we supply an algorithm that associates to each $w \in D_n$  a decomposition of $w$ into a product of reflections whose sum of depths is $d(w)$. This will prove that $d(w)\geq\dep(w)$. Then we will show that $d(w)\leq\dep(w)$.

As in type B, we will present our algorithm as an algorithm to sort $w$ to the identity signed permutation $[1,\ldots,n]$ by a sequence of right multiplications by reflections $t_{i_k j_k}$.  One can then recover a decomposition of $w$ as the product of these reflections in reverse order.  Our algorithm is applied on each type D block of $w$ separately; hence from now on we assume that $w$ is type D indecomposable.

\begin{alg} Let $w \in D_n$ be type D indecomposable and $w=w_1\oplus\cdots \oplus w_b$ its type B decomposition.
\begin{enumerate}
\item If every positive entry in $w_b$ is to the right of its natural position, then proceed to the next step.  Otherwise, let $i$ be the position such that $w(i)$ is maximal among all entries (necessarily in $w_b$) with $w(i)>i$.  Now let $j$ be the position such that $w(j)$ is minimal among all $w(j)$ for which $i<j \leq w(i)$.  Right multiply by $t_{ij}$.  Repeat this step until $w(i)\leq i$ for all $i$ in $w_b$.

\item If our signed permutation is now type B indecomposable (disregarding any positive entries with absolute value greater than that of any negative entry, which are now all in their natural positions), proceed to the next step.  Otherwise, let $i$ be the index of the first position in the last type B block, and right multiply by $t_{(i-1)i}$.  Then go back to Step 1.

\item Right multiply by $t_{\bar{i}j}$, where $w(i)$ and $w(j)$ are the two negative entries of largest absolute value, and go back to Step 1.
\end{enumerate}
\end{alg}

The algorithm begins by shuffling each positive entry $w(i)$ in the rightmost B-block that appears to the left of its natural position into its natural position, starting from the largest and continuing in descending order.  Once this is completed, we join the two rightmost B-blocks (not counting the string of positive entries in their natural positions at the far right) by a simple reflection and continue moving positive entries that are to the left of their natural positions in the newly enlarged rightmost B-block.  At the end of this process, our signed permutation (which was assumed to be D-indecomposable) is B-indecomposable (excepting the string of positive entries in their natural positions at the far right), and every positive entry is to the right of its natural position.  Then we unsign a pair, thus obtaining two new positive entries.  The entire process restarts, and the remaining entries might be further shuffled.  Unsigning and shuffling moves continue to alternate until neither type of move can be performed.

At the end of the algorithm, there are no negative entries, and no positive entry is to the left of its natural position.  Hence we must have the identity signed permutation.

As in the type B algorithm, it can happen that $w$ is transformed into a D-decomposable permutation, but this does not pose a problem since the only way this occurs is by creating blocks on the right consisting entirely of positive entries that will be further acted upon by Step 1 to move them to their natural positions, and indecomposability only matters in ensuring we do not join B-blocks from different D-blocks in Step 2.

We give an example of our algorithm.

\begin{exa}

Let $w=[5, \bar1, \bar3, 2, \bar4, 6, 9, \bar8, 7]\in D_9$.  Note $w$ is D-indecomposable but has 3 B-blocks.  We have $d(w)=(5-1)+(9-7)+(1+3+4+8)+2-4=20$.  Our first step is to shuffle entry 9 to position 9:
$$w= [5, \bar1, \bar3, 2, \bar4, 6, {\bf 9}, \bar8, 7]
\stackrel[1]{t_{78}}{\rightarrow}
[5, \bar1, \bar3, 2, \bar4, 6, \bar8, {\bf 9}, 7]
\stackrel[1]{t_{89}}{\rightarrow}
[5, \bar1, \bar3, 2, \bar4, 6, \bar8, 7, 9].$$

Now we apply Step 2:
$$[5, \bar1, \bar3, 2, \bar4, {\bf 6}, {\bf \bar8}, 7, 9]
\stackrel[1]{t_{67}}{\rightarrow}
[5, \bar1, \bar3, 2, \bar4, \bar8, 6,7, 9].$$

We have no positive entries in the rightmost relevant B-block (ignoring the rightmost block consisting of the 9 by itself) that are to the left of their natural positions, so we bypass Step 1 and apply Step 2 again:
$$[5, \bar1, \bar3, 2, {\bf \bar4}, {\bf \bar8}, 6, 7, 9]
\stackrel[1]{t_{56}}{\rightarrow}
[5, \bar1, \bar3, 2, \bar8, \bar4, 6,7, 9].$$

Now we apply Step 1:
$$[{\bf 5}, \bar1, \bar3, 2, \bar8, \bar4, 6, 7, 9]
\stackrel[4]{t_{15}}{\rightarrow}
[\bar8, \bar1, \bar3, 2, 5, \bar4, 6,7, 9].$$

Then we unsign the two most negative entries:
$$[{\bf \bar8}, \bar1, \bar3, 2, 5, {\bf \bar4}, 6, 7, 9]
\stackrel[5]{t_{\bar1 6}}{\rightarrow}
[4, \bar1, \bar3, 2, 5, 8, 6,7, 9].$$

We then push 8 and then 4 forward to their natural positions:
\begin{eqnarray*}
[4, \bar1, \bar3, 2, 5, {\bf 8}, 6,7, 9] &\stackrel[1]{t_{67}}{\rightarrow}& [4, \bar1, \bar3, 2, 5, 6,{\bf 8},7, 9] \stackrel[1]{t_{78}}{\rightarrow}
[{\bf 4}, \bar1, \bar3, 2, 5, 6,7, 8, 9]\\
&\stackrel[2]{t_{13}}{\rightarrow}& [\bar3, \bar1, {\bf 4}, 2, 5, 6, 7, 8, 9]  \stackrel[1]{t_{34}}{\rightarrow}[\bar3, \bar1, 2, 4, 5, 6, 7, 8, 9].
\end{eqnarray*}

Finally, we unsign the 1 and the 3 and push the 3 forward to its natural position:
$$[{\bf \bar3}, {\bf \bar1}, 2, 4, 5, 6, 7, 8, 9]
\stackrel[1]{t_{\bar12}}{\rightarrow}
[1,{\bf 3},2,4,5,6,7,8,9]
\stackrel[1]{t_{23}}{\rightarrow}
[1,2,3,4,5,6,7,8,9],$$
and we are done.  The decomposition $w=t_{23}t_{\bar12}t_{34}t_{13}t_{78}t_{67}t_{\bar16}t_{15}t_{56}t_{67}t_{89}t_{78}$ is obtained, and the sum of depths of the reflections involved is 20, as expected.
\end{exa}

Note that $9-7=2$ is the cost we pay to move 9 to its place, and $5-1=4$ is the cost we pay to move 5 to its place.  Furthermore, we pay $2=o^D(w)$ in Step 2 moves to join $w$ into a single B-block.  The treatment of the pair 8 and 4, from their unsigning to their arrival at their natural positions, costs $8+4-2=10$, and the treatment of the 1 and 3 together costs $1+3-2=2$.

Now we show, as suggested, that the total depths of the reflections applied in the algorithm is exactly $d(w)$.  This will prove that $\dep(w)\leq d(w)$.  The proof is based on the following lemmas.  We will assume that $w$ is D-indecomposable with $b$ B-blocks, and, for $k\in[b]$, we let $a_k$ be the index of the last entry in the $k$-th B-block, so that the $k$-th B-block has $a_k-a_{k-1}$ entries.

\begin{lem}
Let $w\in D_n$ be D-indecomposable.  Then the total cost of the first series of applications of Step 1 before the first application of Step 2 or Step 3 is $\sum_{i>a_{b-1}, w(i)>i} (w(i)-i)$.
\end{lem}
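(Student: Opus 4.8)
The plan is to mimic the proof of Lemma~\ref{lem:typeBstep1cost}, the one new ingredient being the observation that this first series of Step~1 moves takes place entirely inside the last B-block $w_b$. First I would record why. By definition of the type~B decomposition, the block $w_k$ permutes (up to signs) the positions $\{a_{k-1}+1,\ldots,a_k\}$, so every entry sitting in a position $i\le a_{b-1}$ has $|w(i)|\le a_{b-1}$, whereas every positive entry of $w_b$ occupies a position $i>a_{b-1}$ and has $w(i)>a_{b-1}$. Consequently, as long as there is a position $i>a_{b-1}$ with $w(i)>i$, the globally maximal entry among all those with $w(i)>i$ lies in $w_b$ (an earlier block can only offer candidates of value $\le a_{b-1}$), and the two indices chosen in a Step~1 transposition satisfy $a_{b-1}<i<j\le w(i)\le n$; thus the transposition acts within $w_b$ and leaves $w_1,\ldots,w_{b-1}$ untouched. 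Since the first series of Step~1 runs exactly until $w(i)\le i$ for every position $i>a_{b-1}$, it precisely sorts to their natural positions the positive entries of $w_b$ that start to the left of where they belong.

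Next I would run the induction of Lemma~\ref{lem:typeBstep1cost}, on the number of positions $i>a_{b-1}$ with $w(i)>i$. If $w(m)$ is the largest such entry and $m$ its position, the algorithm applies transpositions $t_{m j_1},t_{j_1 j_2},\ldots$ carrying $w(m)$ to position $w(m)$, and the depths telescope to $w(m)-m$. The crucial point, exactly as in type~B and requiring no indecomposability, is the counting argument: while $w(m)$ sits at a position $i<w(m)$, the $w(m)-i$ distinct values occupying positions $i+1,\ldots,w(m)$ are all smaller than $w(m)$ (otherwise $w(m)$ would not be maximal among entries with $w(p)>p$), hence at least one of them is $\le i$, so at every swap $t_{ij}$ in this sequence the displaced entry $w(j)$ satisfies $w(j)\le i<j$. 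This shows both that no positive entry is ever moved to the left of its natural position and that the only effect of this block of swaps on $\sum_{i>a_{b-1},\,w(i)>i}(w(i)-i)$ is to decrease it by $w(m)-m$. Applying the inductive hypothesis to the resulting permutation, which has one fewer such position, gives the claimed total $\sum_{i>a_{b-1},\,w(i)>i}(w(i)-i)$.

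I expect the genuinely new point to verify carefully is the confinement claim of the first paragraph: one must check that even though bringing $w(m)$ forward shoves smaller entries of $w_b$ leftward, none of them is pushed into an earlier block, and that no entry of $w_1,\ldots,w_{b-1}$ ever becomes a candidate for Step~1 before Step~2 fuses blocks. Both follow from the value/position bounds $|w(i)|\le a_{b-1}$ for $i\le a_{b-1}$ and $w(i)>a_{b-1}$ for positive $w(i)$ in $w_b$, together with the fact that the displaced entries land at or to the right of their natural positions; but this is precisely where the type~D algorithm differs from type~B, so it should be spelled out explicitly. Everything else is a transcription of the type~B argument.
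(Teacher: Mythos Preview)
Your proposal is correct and takes the same approach as the paper, which simply states that the proof is identical to that of Lemma~\ref{lem:typeBstep1cost}. Your additional paragraph verifying that the first series of Step~1 moves is confined to the last B-block $w_b$ makes explicit what the paper leaves implicit in the parenthetical ``(necessarily in $w_b$)'' in the algorithm description, and is a welcome clarification rather than a departure.
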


The proof for this lemma is entirely identical to that of Lemma~\ref{lem:typeBstep1cost}.

\begin{lem}
\label{lem:Dstep2conditions}
Suppose $w$ is D-indecomposable, and let $u$ be the result of a series of applications of Step 1 after $c-1$ applications of Step 2 to $w$.  Unless $u$ is the identity, $u(a_{b-c}+1)<0$.  Furthermore, the cost of the $k$-th application of Step 2 and the ensuing series of applications of Step 1 is $1+\sum_{a_{b-c-1}<i\leq a_{b-c}, u(i)>i} (u(i)-i) = 1+\sum_{a_{b-c-1}<i\leq a_{b-c}, w(i)>i} (w(i)-i)$.  Hence, the total cost of all applications of Step 1 and Step 2 before the first application of Step 3 is $\sum_{w(i)>i} (w(i)-i) + o^D(w)$.

Furthermore, if $v$ is the result of all applications of Step 1 and Step 2 before the first application of Step 3, then $v$ consists of one B-indecomposable (and D-indecomposable) block in positions $[1,k]$ and positive entries all in their natural positions further to the right, where $k$ is the absolute value of the most negative entry of $w$.
\end{lem}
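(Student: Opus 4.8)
The plan is to track, block by block, what a single application of Step~2 followed by the ensuing Step~1 moves does to the window notation, and to match each move against a term in $d(w)$. Recall that the B-blocks of a D-indecomposable $w$ are $w_1\oplus\cdots\oplus w_b$, and that (as noted just before the definition of $o^D$) the outer blocks $w_1$ and $w_b$ have an odd number of negative entries while any interior blocks $w_2,\ldots,w_{b-1}$ have an even number. The first claim to establish is the statement $u(a_{b-c}+1)<0$: after $c-1$ applications of Step~2 the algorithm has fused the $c$ rightmost ``relevant'' B-blocks into one, whose leftmost position is $a_{b-c}+1$; since the remaining left part $w_1\oplus\cdots\oplus w_{b-c}$ is still B-indecomposable with $w_{b-c}$ carrying an odd number of negatives when $c<b$ (and is the whole of $w$ when $c=b$), and since after Step~1 every positive entry sits at or to the right of its natural position, the entry in position $a_{b-c}+1$ cannot be positive and in its natural position — for then the permutation would decompose there, contradicting the running invariant. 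Hence that entry is negative. (When the fused block has become B-indecomposable, the algorithm leaves Step~2 and this case no longer arises — which is exactly the ``unless $u$ is the identity'' escape clause once all negatives are gone.)

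Next I would compute the cost of the $c$-th application of Step~2 together with the subsequent run of Step~1. Step~2 here is right multiplication by $t_{(a_{b-c}+1-1)\,(a_{b-c}+1)} = t_{a_{b-c}\,(a_{b-c}+1)}$, a simple transposition swapping adjacent positions straddling the block boundary; by Lemma~\ref{lem:typeDdepthreflections} its depth is $1$. This move merges two B-blocks into one and, since exactly one of the two merged blocks changes parity status (the construction guarantees we are joining the odd-negative block $w_{b-c}$ onto the growing right block, and the new single block has the combined negative count), it reduces the number of B-blocks by $1$ and hence decreases $o^D$ by $1$ while leaving $\Neg(w)$ and $\nneg(w)$ untouched; the first sum of $d$ is unchanged by the swap itself because both swapped entries, being on opposite sides of a block boundary, are at their natural positions relative to their blocks. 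The ensuing Step~1 moves only shuffle the newly enlarged right block, and by the argument of Lemma~\ref{lem:typeBstep1cost} they cost precisely $\sum_{a_{b-c-1}<i\le a_{b-c},\,w(i)>i}(w(i)-i)$, never pushing a positive entry left of its natural position, never touching a negative entry's sign, and never changing $o^D$. Since Step~1 permutes entries within a block without altering which ones exceed their positions once restricted to that block, $\sum_{a_{b-c-1}<i\le a_{b-c},\,u(i)>i}(u(i)-i)=\sum_{a_{b-c-1}<i\le a_{b-c},\,w(i)>i}(w(i)-i)$, giving the stated cost $1+\sum_{a_{b-c-1}<i\le a_{b-c},\,w(i)>i}(w(i)-i)$.

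Summing the Lemma~\ref{lem:typeBstep1cost}-type contribution of the very first Step~1 run (over the rightmost block) and then these $c=1,\ldots,o^D(w)$ contributions telescopes: each of the $o^D(w)$ Step~2 moves contributes a $1$, totalling $o^D(w)$, and the partial sums $\sum_{a_{k-1}<i\le a_k,\,w(i)>i}(w(i)-i)$ over all blocks $k$ reassemble into the full sum $\sum_{w(i)>i}(w(i)-i)$. This yields the total $\sum_{w(i)>i}(w(i)-i)+o^D(w)$ before the first Step~3. The final structural claim — that the result $v$ is a single B-indecomposable block on $[1,k]$ with positive entries in natural positions to its right, $k$ being the largest absolute value among negatives of $w$ — follows because Step~2 is applied exactly until the left part is B-indecomposable (its exit condition), every positive entry of absolute value exceeding $k$ was the smallest entry above every negative and so got parked in its natural position by Step~1 (the argument of the Observation in Section~\ref{sec:typeBalg}), and the most negative entry $-k$ occupies position $k$ once everything above it is sorted. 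The main obstacle is the bookkeeping in the first paragraph: one must argue carefully that the leftmost position of the current fused block really is $a_{b-c}+1$ and that the invariant ``left part is B-indecomposable with $w_{b-c}$ odd-negative'' is preserved across the interleaved Step~1 shuffles, which do move negative entries around within the right block but provably never into the left part.
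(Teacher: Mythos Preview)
Your overall approach---handle the B-blocks from right to left, charge each Step~2 move cost~$1$, and invoke the type-B Step~1 cost analysis (Lemma~\ref{lem:typeBstep1cost}) for the interleaved shuffles---is the same as the paper's. The crucial step, as you yourself flag, is proving $u(a_{b-c}+1)<0$, and here the argument you propose does not work. The invariant you state is false: $w_1\oplus\cdots\oplus w_{b-c}$ is not B-indecomposable (it has $b-c$ B-blocks), and for $1<b-c<b$ the block $w_{b-c}$ is an \emph{interior} B-block of the D-indecomposable $w$, hence carries an \emph{even} number of negatives, not odd. More to the point, no fact about the untouched left part can by itself force the sign of an entry in the fused right part. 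The paper proves $u(a_{b-c}+1)<0$ by induction on $c$: in the base case one assumes $u(a_{b-1}+1)>0$, lets $j$ be the leftmost negative position in the fused block, and argues that every $i$ with $a_{b-1}<i<j$ already had $0<w(i)\le i$ in the original $w$ (otherwise a Step~1 move processing that exceedance would have pulled a negative entry into a position $<j$), forcing $w(i)=i$ there and contradicting B-indecomposability of $w_b$. The inductive step then checks that the Step~2 swap followed by the next Step~1 run reproduces the same situation one block to the left.

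Several supporting claims in your sketch are also wrong as stated and should be repaired. The two entries swapped by Step~2 are not ``at their natural positions relative to their blocks'': the right one is negative and the left one merely satisfies $u(a_{b-c})\le a_{b-c}$; the correct reason the exceedance sum is unchanged by the swap is that neither position is an exceedance before or after. Your justification of $\sum_{a_{b-c-1}<i\le a_{b-c},\,u(i)>i}(u(i)-i)=\sum_{a_{b-c-1}<i\le a_{b-c},\,w(i)>i}(w(i)-i)$ is backwards: Step~1 certainly \emph{does} change which positions are exceedances; the point is rather that positions $[a_{b-c-1}+1,a_{b-c}]$ have not been touched at all prior to the $c$-th Step~2 move. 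Finally, the most negative entry $-k$ need not occupy position $k$; what holds (and is all the structural claim about $v$ needs) is that every entry of absolute value exceeding $k$ is positive and in its natural position.
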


\begin{proof}
We proceed by induction on $c$.  In the base case where $c=1$, since $u$ is not the identity and $w$ is D-indecomposable, the rightmost B-block of $w$ must have a negative entry.  Assume for contradiction that $u(a_{b-1}+1)>0$, and let $j$ be the leftmost position in the $b-c+1$-th B-block such that $u(j)<0$.  Then we must have $0<w(i)\leq i$ for all $i$ with $a_{b-c+1}<i<j$ since, otherwise, $w(i)$ would have been swapped with $u(j)$ or some other negative entry when it was moved.  (It is possible that $u(j)$ started further to the right, but if the entry $u(j)$ was to the right of the position $w(i)$ when $w(i)$ was moved, then it is to the right of position $p$ for any $p<w(i)$, and hence it would never have been moved to the $j$-th position.)  This contradicts the assumption that $[a_{b-1}+1,n]$ forms a single B-block.

Now we treat the inductive case.  By the inductive hypothesis, when the reflection $t_{a_{b-c+1},a_{b-c+1}+1}$ in Step 2 is applied to a signed permutation $u$, $u(a_{b-c+1})\leq a_{b-c+1}$ (since $a_{b-c+1}$ is the rightmost position in a B-block), and $u(a_{b-c+1}+1)<0$.  Hence this reflection does not affect the set $\{i\in[a_{b-c}+1,a_{b-c+1}]\mid u(i)>i\}$, and, as no other reflections have previously been applied to these positions, $\{i\in[a_{b-c}+1,a_{b-c+1}]\mid u(i)>i\}=\{i\in[a_{b-c}+1,a_{b-c+1}]\mid w(i)>i\}$.  Hence by the argument of Lemma~\ref{lem:typeBstep1cost}, the total cost of the single Step 2 move and the ensuing series of Step 1 moves is $1+\sum_{a_{b-c}<i\leq a_{b-c+1}, w(i)>i} (w(i)-i)$.  Furthermore, by the same argument as in the previous paragraph, $u(a_{b-c}+1)<0$.

Since we started with $o^D(w)+1$ B-blocks, we performed $o^D(w)$ Step 2 moves, each of which cost depth 1, so the cost of all these moves is $\sumlim_{w(i)>i} (w(i)-i) + o^D(w)$.

If $v$ is the signed permutation produced by the $c$-th application of Step 2, we must have $v(a_{b-c}+1)\leq a_{b-c}$, and this entry is not moved before the first application of Step 3.  Hence, positions $a_{b-c}$ and $a_{b-c}+1$ will be in the same block.  Furthermore, since $v(a_{b-c})<0$ and $|v(a_{b-c})|$ is larger than the absolute value of any entry in the $b-c$-th B-block, the ensuing series of Step 1 moves will not break this B-block (and further Step 2 and Step 1 moves are all to its left), so $a_{b-c-1}+1$ and $a_{b-c}$ remain in the same B-block.  Hence the resulting permutation is B-indecomposable except possibly for a string of positive entries all in their natural positions at the far right.
\end{proof}

\begin{lem}
Let $w \in D_n$ be D-indecomposable. Then, whenever Step 3 is performed, we have a permutation $u$ satisfying $u(i)<0$, $u(j)<0$, $|u(i)| \geq j$, and $|u(j)| \geq i$, where $|u(i)|$ and $|u(j)|$ are the two largest values of negative entries and $i<j$.
\end{lem}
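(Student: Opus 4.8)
### Proof proposal

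The plan is to establish this lemma as the type D analogue of Lemma~\ref{lem:Bshuffleresult}, relying on the structural description of the signed permutation produced by Steps 1 and 2 that is already packaged in Lemma~\ref{lem:Dstep2conditions}. The key observation is that, at the moment Step 3 is invoked, the relevant portion of the signed permutation is exactly in the situation governed by the earlier type B argument: by Lemma~\ref{lem:Dstep2conditions}, the signed permutation $v$ on which Step 3 acts consists of a single B-indecomposable block in positions $[1,k]$, where $k$ is the absolute value of the most negative entry, together with positive entries sitting in their natural positions to the right of position $k$; moreover every positive entry is at or to the right of its natural position. So the first thing I would do is reduce to this block $v' := v|_{[1,k]}$, which is B-indecomposable, has every positive entry weakly right of its natural position, and has $k$ as the absolute value of its most negative entry.

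Next, I would run essentially verbatim the argument of Lemma~\ref{lem:Bshuffleresult}. Write $x := |u(i)|$ and $y := |u(j)|$ for the two largest absolute values among the negative entries, with $i < j$. First I would show: for any $p \leq k$ there is some $q \leq p$ with $u(q) < -p$. The reason is that B-indecomposability of $v'$ forces some $q \leq p$ with $|u(q)| > p$, and since every positive entry is weakly right of its natural position, such an entry cannot be positive, so $u(q) < -p$. Applying this with $p = j-1$ gives that among the first $j-1$ positions there is an entry of absolute value at least $j$, and since $-x$ is the unique most negative such candidate (as $x$ and $y$ are the two largest negative absolute values), this forces $x \geq j$.

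Then for the bound $y \geq i$ I would split into cases exactly as before. If $y > x$, then $y > x \geq j > i$ and we are done. If $y < x$, consider the $x - y - 1$ entries of absolute values $y+1, \dots, x-1$; these are all positive (since $x, y$ are the two largest negative absolute values), hence weakly right of their natural positions, hence in positions $> y$; and they are weakly left of position $x$ since $x = k$ is the most negative entry and everything right of position $x$ is positive and in natural position. If $y < i$ were to hold, then the entries $-y, y+1, \dots, x-1, -x$ would all have to occupy the positions $y+1, \dots, x = k$, which is a contradiction of counting, so $y \geq i$. Finally $u(i) < 0$ and $u(j) < 0$ hold by the very choice of $i, j$ as positions of negative entries. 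I expect the main obstacle to be purely bookkeeping: confirming that Lemma~\ref{lem:Dstep2conditions} really does hand us a configuration satisfying all the hypotheses of the type B argument (B-indecomposability of the prefix block, every positive entry weakly right of its natural position, and $k$ being both the most negative value and the right endpoint of the block), after which the combinatorial core is a direct transcription of the proof of Lemma~\ref{lem:Bshuffleresult}.
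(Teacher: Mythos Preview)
Your proposal is correct and matches the paper's approach exactly: the paper's proof is the single sentence ``Since $u$ is B-indecomposable (except for the entries at the far right), the proof is the same as for Lemma~\ref{lem:Bshuffleresult},'' and you have simply unpacked both halves of that sentence in detail. The only minor point is that the statement concerns \emph{every} invocation of Step~3, not just the first, so for later invocations the B-indecomposability comes from the lemma immediately following this one rather than from Lemma~\ref{lem:Dstep2conditions}; the paper is equally terse on this, and the argument is otherwise identical.
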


Since $u$ is B-indecomposable (except for the entries at the far right), the proof is the same as for Lemma~\ref{lem:Bshuffleresult}.

\begin{lem}
Let $u \in D_n$ be B-indecomposable such that $u(m)<m$ for all $m$, and let $i<j$ be such that  $u(i)<0, u(j)<0$, $|w(i)| \geq j$ and $|w(j)| \geq i$. Then right multiplying $u$ by $t_{\bar{i}j}$ and repeatedly applying Step 1 of the algorithm until it can no longer be applied will unsign $w(i)$ and $w(j)$ and put them in their natural positions.  These reflections cost in total exactly $|w(i)|+|w(j)|-2$.  The resulting permutation $u'\in D_n$ has $d(u')=d(u)-(|w(i)|+|w(j)|-2)$ and is B-indecomposable (except possibly for positive entries in their natural positions at the far right).
\end{lem}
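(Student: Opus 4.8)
The plan is to follow the proof of Lemma~\ref{lem:Bdoubleunsigncost} almost verbatim, with the type~D reflection cost in place of the type~B one, and to add two new pieces of bookkeeping: the behaviour of the type~D oddness, and a verification that $u'$ remains $B$-indecomposable up to trailing fixed points.

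First I would track the cost. By Lemma~\ref{lem:typeDdepthreflections}, right multiplication by $t_{\bar i j}$ has depth $i+j-2$; it unsigns $w(i)$ and $w(j)$, switches their positions, and therefore puts the positive entry $|w(i)|$ in position $j$ and the positive entry $|w(j)|$ in position $i$, leaving all other positions untouched. Since $u(m)\le m$ for every $m\notin\{i,j\}$, the only entries now lying at or to the left of their natural positions are $|w(i)|$ (in position $j\le|w(i)|$) and $|w(j)|$ (in position $i\le|w(j)|$). Exactly as in Lemma~\ref{lem:Bdoubleunsigncost}, the two ensuing series of Step~1 moves --- carrying $|w(i)|$ from position $j$ to position $|w(i)|$, and $|w(j)|$ from position $i$ to position $|w(j)|$ --- cost $|w(i)|-j$ and $|w(j)|-i$ respectively by the counting in the proof of Lemma~\ref{lem:typeBstep1cost}, move no positive entry to the left of its natural position, and do not interfere with one another because the two entries start at positions $i$ and $j>i$ with $|w(i)|\ge j$ and $|w(j)|\ge i$. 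Summing, the total cost is $(i+j-2)+(|w(i)|-j)+(|w(j)|-i)=|w(i)|+|w(j)|-2$, and afterwards $w(i)$ and $w(j)$, now positive, sit in their natural positions and no positive entry lies to the left of its natural position, so Step~1 is exhausted.

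Next I would compute $d(u)-d(u')$. Writing $d=A+B+C$ for the three summands of Equation~\eqref{eq:d_D} and $\Delta X:=X(u)-X(u')$: since $u(m)\le m$ for all $m$ and no positive entry of $u'$ stands to the left of its natural position, the first summand is $0$ for both, so $\Delta A=0$; the only entries whose signs change are $w(i)$ and $w(j)$, so $\Delta B=|w(i)|+|w(j)|$; and, granting the structural claim that $u'$ is $B$-indecomposable up to trailing fixed points (whence $o^D(u)=o^D(u')=0$) together with $\nneg(u')=\nneg(u)-2$, we get $\Delta C=-2$. Therefore $d(u)-d(u')=|w(i)|+|w(j)|-2$, as claimed.

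It remains to establish the structural claim, and this is the step I expect to be the main obstacle. I would first analyse $v:=ut_{\bar i j}$ and show, by the position-tracking used in the Observation of Section~\ref{sec:typeBalg} and in the proof of Lemma~\ref{lem:Bshuffleresult}, that under the hypotheses $|w(i)|\ge j$, $|w(j)|\ge i$ and $B$-indecomposability of $u$, every negative entry of $v$ lies in the leftmost $B$-block of $v$; consequently $v$ decomposes as $[\text{leftmost }B\text{-block}]\oplus[\text{all-positive }B\text{-blocks}]$. Since Step~1 acts independently on each $B$-block, turning the leftmost (indecomposable) block into a $B$-indecomposable permutation followed by trailing fixed points --- by the Observation --- and turning each all-positive block into a string of fixed points, the resulting $u'$ is $B$-indecomposable up to trailing fixed points, and its negative entries are precisely those of $u$ other than $w(i)$ and $w(j)$. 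The delicate part is exactly the claim that no negative entry of $v$ can escape its leftmost $B$-block, which is where the inequalities bounding the positions $i$ and $j$ by the absolute values $|w(j)|$ and $|w(i)|$ are used, just as in Lemma~\ref{lem:Bshuffleresult}. Once this is in place, every assertion of the lemma follows, and in particular $d(u')=d(u)-(|w(i)|+|w(j)|-2)$.
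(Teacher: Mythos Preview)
Your approach is essentially the paper's: the paper's entire proof reads ``The proof is entirely identical to the proof of Lemma~\ref{lem:Bdoubleunsigncost}, except that multiplying by $t_{\bar{i}j}$ costs $i+j-2$ rather than $i+j-1$,'' and your cost computation and $\Delta A,\Delta B,\Delta C$ bookkeeping unpack exactly this.

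Where you go beyond the paper is in the structural claim that $u'$ is $B$-indecomposable up to trailing fixed points (equivalently, that $o^D(u')=0$). The paper's lemma statement asserts this, but its one-line proof does not address it at all --- nor does the analogous type~B proof justify the parallel assertion $o^B(u)=o^B(u')$. You correctly flag this as the delicate step and sketch the right idea (all remaining negatives stay within one $B$-block, and the Observation governs what Step~1 does to that block). Your sketch is not fully carried out, but it is already more than the paper provides; the key missing detail is that after $t_{\bar i j}$ and before Step~1 the permutation $v$ need not itself be $B$-indecomposable, so one should argue that the Observation still applies to the union of its $B$-blocks that contain negative entries, or equivalently that Step~1 on the large entries $|w(i)|$ and $|w(j)|$ sweeps through all intermediate positions and therefore cannot leave a nontrivial $B$-block boundary to the left of the largest remaining negative.
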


The proof is entirely identical to the proof of Lemma~\ref{lem:Bdoubleunsigncost}, except that multiplying by $t_{\bar{i}j}$ costs $i+j-2$ rather than $i+j-1$.

By the lemmas above, the total cost of the algorithm applied to $w\in D_n$, a D-indecomposable signed permutation, is $d(w)$.  Since the algorithm is separately applied to each D-block of a D-decomposable permutation, and the formula $d(w)$ is additive over D-blocks, we can conclude the following.

\begin{prop}\label{prop:upperboundD}
For any $w\in D_n$, $d(w)\geq\dep(w)$.
\end{prop}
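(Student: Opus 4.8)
The statement to prove is Proposition~\ref{prop:upperboundD}: for any $w\in D_n$, $d(w)\geq\dep(w)$. The plan is to mirror exactly the structure already used in type B, where this direction was established by exhibiting an explicit algorithm that sorts $w$ to the identity using reflections whose depths sum to $d(w)$. Since every reflection used in sorting gives a factorization of $w$, this immediately yields $\dep(w)\leq d(w)$.

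First I would reduce to the D-indecomposable case: the algorithm is applied separately to each type D block, and $d(w)$ is additive over type D blocks (each summand in the formula for $d$ localizes to a block, and $o^D$ and $\nneg$ are sums over blocks), so it suffices to prove the cost accounting for a single D-indecomposable $w$. Then I would run the algorithm and track the total depth consumed, using the four lemmas stated just above the proposition. Concretely: the first run of Step~1 costs $\sum_{i>a_{b-1},\,w(i)>i}(w(i)-i)$ (the lemma whose proof is ``identical to Lemma~\ref{lem:typeBstep1cost}''); then Lemma~\ref{lem:Dstep2conditions} shows that all the Step~2 moves together with their interleaved Step~1 moves, carried out until the first Step~3, cost $\sum_{w(i)>i}(w(i)-i)+o^D(w)$ and leave a B-indecomposable signed permutation with all positive entries weakly right of their natural positions and a negative entry in the leftmost slot; then the third lemma guarantees that whenever Step~3 is invoked the two most-negative entries sit in positions $i<j$ with $|u(i)|\geq j$ and $|u(j)|\geq i$, which is exactly the hypothesis needed for the fourth lemma; and the fourth lemma says each Step~3 move (the reflection $t_{\bar i j}$, costing $i+j-2$) followed by its Step~1 cleanup costs $|w(i)|+|w(j)|-2$ and reduces $d$ by the same amount, while preserving B-indecomposability. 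Induction on $\nneg(w)$ (which drops by $2$ at each Step~3) then finishes: since $\nneg(w)$ is even in $D_n$, the process terminates at the identity with no leftover single-unsigning needed, and the total depth consumed is
$$\sum_{\{i\mid w(i)>i\}}(w(i)-i) + \sum_{i\in\Neg(w)}|w(i)| + o^D(w) - \nneg(w) = d(w),$$
where the $-\nneg(w)$ arises because each of the $\nneg(w)/2$ Step~3 moves contributes $|w(i)|+|w(j)|-2$ rather than $|w(i)|+|w(j)|$.

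The main obstacle, and the reason type D genuinely differs from type B, is handling the Step~2 moves, i.e.\ verifying Lemma~\ref{lem:Dstep2conditions}: one must confirm that joining the two rightmost B-blocks with a single adjacent transposition $t_{(i-1)i}$ (a) does not disturb the already-sorted positive tail, (b) does not prematurely merge B-blocks coming from different D-blocks (this is where D-indecomposability is used), and (c) interacts correctly with the subsequent Step~1 shuffles so that the inductive invariant ``$u(a_{b-c}+1)<0$'' is maintained. The induction in that lemma requires care about where a negative entry ``started'' before being shuffled — the parenthetical argument that an entry to the right of position $w(i)$ when $w(i)$ moves can never subsequently reach a more leftward position is exactly the subtle point. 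Everything else (the first Step~1, Step~3, the final descent to the identity) is essentially a transcription of the type B argument with the reflection cost $i+j-1$ replaced by $i+j-2$, reflecting $\dep(t_{\bar i j})=i+j-2$ in type D, and with the single-unsigning case of type B absent because $\nneg$ is always even. Having established the cost accounting, the conclusion $d(w)\geq\dep(w)$ is immediate, and combining with the reverse inequality (proved separately as in type B via a lemma bounding $d(w)-\dep(t)\leq d(wt)$) will give Theorem~\ref{thm:depthD}.
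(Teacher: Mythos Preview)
Your proposal is correct and follows the paper's approach essentially verbatim: reduce to D-indecomposable blocks by additivity of $d$, then invoke the four lemmas preceding the proposition to verify that the algorithm's total cost is exactly $d(w)$, whence $\dep(w)\leq d(w)$. Your identification of Lemma~\ref{lem:Dstep2conditions} as the genuinely new ingredient relative to type~B, and your cost breakdown (Step~1 and Step~2 contributing $\sum_{w(i)>i}(w(i)-i)+o^D(w)$, each Step~3 plus cleanup contributing $|w(i)|+|w(j)|-2$), both match the paper exactly.
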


\subsection{Proof of Theorem~\ref{thm:depthD} and subsequent corollaries}

To prove that $d(w)\leq\dep(w)$ it is sufficient to prove the following lemma.

\begin{lem}\label{l:DeltaD} For any element $w \in D_n$ and any reflection $t\in T^D$,
$$d(w)-\dep(t) \leq d(wt).$$
\end{lem}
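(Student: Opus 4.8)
The plan is to mimic the structure of the proof of Lemma~\ref{l:DeltaB}, carrying out a case analysis over the two types of reflections $t\in T^D$ (namely $t=t_{ij}$ and $t=t_{\bar i j}$), and within each type over the position of the relevant entries relative to $i$ and $j$. As in type B, write the three summands of the formula \eqref{eq:d_D} as $A$ (the positive-exceedance sum), $B$ (the negative-entry sum), and $C=o^D(w)-\nneg(w)$, and set $\Delta d:=d(w)-d(wt)$, $\Delta A$, $\Delta B$, $\Delta C$ accordingly; the goal is to show $\Delta d\le\dep(t)$ in every case. The key difference from type B is that now $C$ involves $o^D$ rather than $o^B$, and the depth of $t_{\bar i j}$ is $i+j-2$ rather than $i+j-1$, so the bookkeeping on how a reflection can change the D-block structure needs care. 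I would first record the basic combinatorial facts about how $t_{ij}$ and $t_{\bar ij}$ affect the type D decomposition: a transposition $t_{ij}$ acting on positions within a single D-block can merge at most the D-blocks containing positions $i$ through $j$, and since a type D block may itself split into several B-blocks, I need to bound the change in $o^D(w)=\sum(b_i-1)$ rather than $o^B$. The cleanest way is to bound $\Delta C$ in terms of the number of B-blocks that are absorbed, using the inequality relating $o^D$ and $o^B$ noted in the excerpt, or by directly observing that joining a span of $m$ positions can decrease the number of D-blocks by at most $m-1$ (hence decrease $C$ by at most $m-1$, since $\nneg$ is unchanged for $t_{ij}$).

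For $t=t_{ij}$ (so $\dep(t)=j-i$, $\Delta B=0$, $\nneg(w)=\nneg(wt)$), I would split into the subcases $w(i)<i$, $i\le w(i)<j$, and $j\le w(i)$ exactly as in type B. In the first subcase the worst the reflection can do to the block structure is merge the D-blocks spanning positions $i$ to $j$, so $\Delta C\le j-i$ directly (decreasing the D-block count by at most $j-i$), while $\Delta A\le 0$ after accounting that $w(j)$ could only increase $d$; thus $\Delta d\le j-i$. In the subcase $i\le w(i)<j$ one has, as in type B, $\Delta A\le w(i)-i$ and the merge now affects at most positions from (the block of) $w(i)$ to $j$, giving $\Delta C\le j-w(i)$, so $\Delta d\le (w(i)-i)+(j-w(i))=j-i$. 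In the subcase $j\le w(i)$ the D-blocks can only be split, so $\Delta C\le 0$ and $\Delta A\le j-i$ by the same computation as in type B.

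For $t=t_{\bar i j}$ (so $\dep(t)=i+j-2$), I would mirror the type B cases 2, 2$'$, 2$''$, distinguishing the signs of $w(i)$ and $w(j)$. When both are negative, $\Delta B=|w(i)|+|w(j)|$ and $\nneg(w)-\nneg(wt)=2$, so $\Delta C = (\Delta o^D) + 2$; the reflection makes two positive entries, which can merge D-blocks and thereby decrease $o^D$ but also split off new D-blocks, and the relevant bound (as in type B subcase 2) is $\Delta o^D \le (\text{span})-2$, giving enough room so that combined with the negative $\Delta A$ contributions we land at $\Delta d\le i+j-2$; the subcase analysis on whether $|w(i)|$ and $|w(j)|$ are less than $i$ or $j$ proceeds as in type B, with the constants shifted by one because $\dep(t_{\bar ij})=i+j-2$ and because in $D_n$ the single-sign reflections $t_{\bar i i}$ do not exist. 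The cases where $w(i),w(j)$ are both positive, or have opposite signs, are handled as in the type B cases 2$'$ and 2$''$: there $\Delta A$ and $\Delta B$ are controlled, and $\Delta C$ is bounded by a small multiple of $j-i$ plus a constant, which is comfortably $\le i+j-2$.

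The main obstacle I anticipate is the precise bound on $\Delta C$ in the cases where $t$ can merge several D-blocks — in type B one only tracks $o^B$, which changes by fusing B-blocks into one, but in type D a single type D block may carry several B-blocks, and merging two D-blocks changes $o^D$ in a way that also depends on whether the internal B-block structure is preserved or altered at the seam. The safe route is to argue abstractly: multiplying by any reflection changes the number of D-blocks by at most (something linear in the span of positions it touches), and this crude bound, combined with the already-established $\Delta A$, $\Delta B$ estimates transplanted verbatim from the type B proof (with the one-unit shift in the depth of $t_{\bar ij}$), suffices in every case. Once this is in place, the proof of Theorem~\ref{thm:depthD} follows from Proposition~\ref{prop:upperboundD} and Lemma~\ref{l:DeltaD} by exactly the same induction on $\dep(w)$ used for Theorem~\ref{thm:depthB}.
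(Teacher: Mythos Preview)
Your plan is correct and matches the paper's proof essentially step for step: the same $A,B,C$ decomposition, the same case split on $t=t_{ij}$ versus $t=t_{\bar ij}$ with the same three subcases on the size of $w(i)$ relative to $i$ and $j$, and the same bounds on $\Delta A$ carried over verbatim from the type~B argument, with the one-unit shift in $\dep(t_{\bar ij})$ absorbed exactly as you describe. One sign slip to fix when you carry it out: in Case~2 (both $w(i),w(j)<0$), since $C=o^D-\nneg$ and $\nneg$ drops by $2$, you get $\Delta C=\Delta o^D-2$, not $+2$; the paper sidesteps this by folding the $-2$ into $\Delta B$, writing $\Delta B=|w(i)|+|w(j)|-2$ and then bounding only $\Delta o^D$ under the name $\Delta C$. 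Also note that $o^D$ is the difference between the B-block count and the D-block count, not the D-block count itself, so in Case~1(a) the paper's bound $\Delta C\le j-i$ comes from the worst case where $j-i+1$ B-blocks (all lying in a single D-block) merge into one while the D-block count is unchanged.
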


We now prove Theorem~\ref{thm:depthD} assuming this lemma.

\begin{proof}[Proof of Theorem~\ref{thm:depthD}]
By Proposition~\ref{prop:upperboundD}, $d(w)\geq\dep(w)$.  We now prove $d(w)\leq\dep(w)$ by induction on $\dep(w)$.  If $\dep(w)=0$, then $w=e$, and $d(e)=0$.  Otherwise, there exists a reflection $t\in T^D$ such that $\dep(w)-\dep(t)=\dep(wt)$.  By the inductive hypothesis, $\dep(wt)=d(wt)$.  Now, by Lemma~\ref{l:DeltaD}, $d(w)-\dep(t)\leq d(wt)=\dep(wt)=\dep(w)-d(t)$.  Hence $d(w)\leq\dep(w)$ as desired.
\end{proof}

Now we prove the lemma in cases for each type of reflection $t$.  The proof is identical to that of Lemma~\ref{l:DeltaB} except for differences caused by the different definitions of oddness and the different depths of $t_{\bar{i}j}$ in the two cases.  We will only note the differences and refer to the previous proof where possible.

\begin{proof}[Proof of Lemma \ref{l:DeltaD}]

As in the proof of Lemma~\ref{l:DeltaB}, we denote the three terms in Equation \eqref{eq:d_D} by $A, B$, and $C$ respectively and let $\Delta d:=d(w)-d(wt)$ and $\Delta A, \Delta B$, and $\Delta C$ be the analogous differences. Then we analyze each type of reflection and the entries in the positions the reflection acts on. 

\noindent {\bf Case 1:}  $t=t_{ij}$, ($\dep(t)=j-i$). Clearly $\Delta B=0$.

\noindent a) $w(i)<i$.
Again, the most change $t$ can do to the B-block structure of $w$ is to join $i$ and $j$ into a single B-block, with each entry in between being a different singleton B-block.  Then $t$ turns $j-i+1$ B-blocks into a single B-block.  In the worse case, $i$ and $j$ are in the same D-block, in which case $o^D(w)$ can decrease by $j-i$.  Since $\Delta A=0$ in this case, $\Delta d\leq j-i$.

\noindent b) $i \leq w(i) <j$.

Combining the analogous argument in Lemma~\ref{l:DeltaB} and Case 1(a) above, we see $\Delta A\leq w(i)-i$ and $\Delta C\leq j- w(i)$, so $\Delta d\leq j-i$.

\noindent c)  $j\leq w(i)$.

This case is exactly as in Lemma~\ref{l:DeltaB}.

\noindent {\bf Case 2:}  $t=t_{\bar{i}j}$, and $w(i)<0$ and $w(j)<0$ ($\dep(t)=i+j-2$). In this case $\Delta B= |w(i)|+|w(j)|-2$.

\noindent a) $|w(i)|<i$.

Suppose first that $|w(j)|\geq i$.  In this case $\Delta A=-(|w(j)|-i)$,  and $\Delta C\leq j-i.$ Hence
$\Delta d \leq |w(i)| +  j -2 \leq i+j-2$.

If $|w(j)|< i$, then $\Delta A=0$, and $\Delta C=0$; hence $\Delta d=|w(i)|+|w(j)|-2\leq i+j-2.$

\noindent b) $i \leq |w(i)| <j$.

Suppose first that $|w(j)|\geq i$. In this case $\Delta A=-(|w(j)|-i)$,  and $\Delta C\leq j-|w(i)|.$ Hence
$\Delta d\leq i+j-2$.

If $|w(j)|< i$, then $A$ does not change, $\Delta C \leq 0$, and $\Delta d\leq i+j-2.$

\noindent c) $j\leq |w(i)|$.

Suppose first that $|w(j)| \geq i$.  In this case  $\Delta A \leq-(|w(i)|-j) - (|w(j)|-i)$, and $o^D(wt)\geq o^D(w)$, so $\Delta C\leq 0.$ Hence 
$\Delta d \leq i+j-2$. If  $|w(j)| < i$, then $\Delta A \leq-(|w(i)|-j)$ and $\Delta d \leq j +|w(j)|-2\leq i+j-2.$
\smallskip

\noindent {\bf Case 2':}  $t=t_{\bar{i}j}$, and $w(i)>0$ and $w(j)>0$.

\noindent a) $w(i)<i$.

In this case $\Delta A$ and $\Delta B$ are nonpositive and $\Delta C\leq j-i$.  Hence $\Delta d \leq i+j-2$ as $i\geq 1$.
\smallskip

\noindent b) $i \leq w(i) <j$.

In this case $\Delta A \leq (w(i)-i)+{\rm max}\{0, (w(j)-j)\}$, $\Delta B= -w(i)-w(j)+2$, and $\Delta C\leq j-w(i)$, and clearly
$\Delta d \leq i+ j-2$.

\noindent c) $j\leq w(i)$.

In this case $\Delta d$ is negative.

\noindent {\bf Case 2'':}  $t=t_{\bar{i}j}$, and $w(i)<0$ and $w(j)>0$.

\noindent This is the same as the analogous case for Lemma~\ref{l:DeltaB}, except that $\Delta C\leq j-i$ instead, and we can conclude that $\Delta d\leq i+j-2$.

The symmetric case $w(i)>0$ and $w(j)<0$ is similar.

\end{proof}

The proof of Corollary~\ref{cor:depthDalt} from Theorem~\ref{thm:depthD} is exactly the same as the proof of Corollary~\ref{cor:depthBalt} from Theorem~\ref{thm:depthB}. Now we prove Corollary~\ref{cor:maxdepthD}.

\begin{proof}[Proof of Corollary~\ref{cor:maxdepthD}]

Let $i \in [n]$, and consider the contribution of $w(i)$ to $\dep(w)$ in Equation~\eqref{eq:d_D}. If $w(i)>0$, then $w(i)$ contributes at most $w(i)-i$ to the first summand. On the other hand, if $w(i)<0$, then it always contributes $|w(i)|-1$ to the second summand. Hence it is clear that a maximal element must have at most one positive entry. In fact, if $w$ had two positive entries $w(i),w(j)$, the D-oddness could be greater than $n/2$ but could never make up for the $w(i)+w(j)-2$ that would be lost from the second term of Equation~\eqref{eq:d_D}. 

If $n$ is even the elements with maximal D-oddness (equal to $n/2$) must be of the form 
$$ [\bar{1}, |\bar{2},\bar{3}|^{\circlearrowleft},\ldots |\overline{n-2},\overline{n-1}|^{\circlearrowleft},\bar{n}],$$
where the notation $|\bar{i},\overline{i+1}|^{\circlearrowleft}$ means that we can consider the entries $\bar{i}$ and $\overline{i+1}$ in either of the two possible orders.
The number of all such elements is clearly $2^{\frac{n-2}{2}}$ since for any of the $(n-2)/2$ pairs we have two choices for ordering its entries.
The depth of such elements is equal to ${n \choose 2}+n/2$. 

If $n$ is odd, then the set of maximal elements splits into four families of elements of the form
$$ [|*,*| |\bar{3},\bar{4}|^{\circlearrowleft},\ldots |\overline{n-2},\overline{n-1}|^{\circlearrowleft},\bar{n}],$$
where the pair $|*,*|$ can be $[1,\bar{2}]$, $[2,\bar{1}]$, $[\bar{2},1]$, or $[\bar{1},2]$.  The D-oddness is $(n-1)/2$ in the first three cases, and $(n+1)/2$ in the last one. In all cases we reach the maximum depth ${n \choose 2}+(n-1)/2$. Clearly there are $4\times 2^{\frac{n-3}{2}}$ such elements, and we are done.
\end{proof}

\subsection{Reducedness of factorization}

Let $w \in D_n$ be indecomposable. Use our algorithm to write $w$ as a product of reflections $w=t_1 \cdots t_r$ realizing the depth of $w$.  Then, for each $k \in [r]$, depending on whether $t_k$ is equal to  $t_{ij}$ or $t_{\bar{i} j}$, replace $t_k$ by the following product of simple reflections:
\begin{eqnarray*}
t_{ij} &{\rm by} & s_{j-1} \cdots s_{i+1} s_{i} s_{i+1} \cdots s_{j-1}, \ {\rm and} \\
t_{\bar{i} j} & {\rm by} & s_{i-1}\cdots s_1  s_{j-1} \cdots s_2 s_0^D  s_2 \cdots s_{j-1}s_1 s_2 \cdots s_{i-1}.
\end{eqnarray*}
To prove Theorem~\ref{thm:typeDred}, we show the following.

\begin{thm}\label{Dreduced}
The decomposition of $w=t_1\cdots t_r$ given by the algorithm, where every $t_i$ is replaced as explained above, is reduced. In particular our algorithm defines a chain from the identity to $w$ in the right weak order of $W$.
\end{thm}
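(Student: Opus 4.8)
The plan is to follow the proof of Theorem~\ref{reduced}. It suffices to show that every right multiplication $w\mapsto wt$ carried out by the type D algorithm satisfies $\ell_S(wt)=\ell_S(w)-\ell_S(t)$: telescoping this identity along the moves of the algorithm gives $\ell_S(w)=\sum_i\ell_S(t_i)$, and since the displayed word of simple reflections replacing each $t_i$ is a reduced word for that reflection (this is exactly what the computation in the proof of Lemma~\ref{lem:typeDreflprodred} establishes), concatenating these words produces a reduced word for $w$; the chain in right weak order is then immediate from reading this word off from the identity. So the task is, for each of the three kinds of moves, to check the hypotheses of the relevant clause of Lemma~\ref{lem:typeDreflprodred} against the state of the signed permutation at the moment the move is performed.

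For a Step 1 move, $t=t_{ij}$ with $w(i)$ maximal among entries exceeding their position and $w(j)$ minimal among entries in positions strictly between $i$ and $w(i)$; hence $w(i)>w(k)>w(j)$ for all $k$ with $i<k<j$, which is clause (1) of Lemma~\ref{lem:typeDreflprodred}. This is word-for-word the Step 1 argument of Theorem~\ref{reduced}. For a Step 2 move, $t=t_{(i-1)i}$ where $i$ is the first position of the current last B-block; the ``in-between'' condition of clause (1) is vacuous, so it is enough to see $w(i-1)>w(i)$. All Step 2 moves precede the first Step 3 move (by the last clause of Lemma~\ref{lem:Dstep2conditions} the permutation is B-indecomposable after the Step 2 phase, and Step 3 moves keep it so), so positions $1,\dots,i-1$ are still as in the original $w$; thus position $i-1$ ends an untouched B-block and $|w(i-1)|\leq i-1$. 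Meanwhile $w(i)<0$ by Lemma~\ref{lem:Dstep2conditions}, and the current last B-block occupies positions $[i,m]$ for some $m$, forcing $|w(i)|\geq i$. Hence $w(i)\leq -i<-(i-1)\leq w(i-1)$, and clause (1) applies.

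For a Step 3 move, $t=t_{\bar i j}$ with $w(i),w(j)$ the two negative entries of largest absolute value and $i<j$; I verify clause (2) of Lemma~\ref{lem:typeDreflprodred}. The inequality $w(i)+w(j)<0$ is clear. By Lemma~\ref{lem:Dstep2conditions} the signed permutation is B-indecomposable at this point (apart from a block of positive fixed points at the far right), so the type D analogue of Lemma~\ref{lem:Bshuffleresult} gives $|w(i)|\geq j$ and $|w(j)|\geq i$. For $k<i$: $w(k)>w(i)$ since $w(i)$ is the global minimum entry and $k\neq i$; and $w(k)+w(j)<0$ because either $w(k)\leq 0$, or $w(k)>0$ and then $w(k)\leq k<i\leq|w(j)|$, the inequality $w(k)\leq k$ holding because Step 1 has left every positive entry at or to the right of its natural position. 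For $k'\neq i$ with $k'<j$: $w(i)+w(k')\leq 0$ by the same estimate, using $|w(i)|\geq j$ this time; and $w(k')>w(j)$ because if $w(k')\geq 0$ this is trivial, while if $w(k')<0$ then $w(k')$ is neither $w(i)$ (as $k'\neq i$) nor $w(j)$ (as $k'<j$), so, the absolute values being pairwise distinct, $|w(k')|<|w(j)|$ and hence $w(k')>w(j)$.

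I expect Step 3 to be the part that needs the most care: in contrast with type B, where the only new case beyond Step 1 is the unsigning move, here one must rule out a large positive entry lying to the left of position $j$ spoiling the sum conditions $w(k)+w(j)<0$ and $w(i)+w(k')\leq 0$ of clause (2). The resolution is that after Step 1 any such positive entry is bounded by its position, which is in turn bounded using $|w(i)|\geq j$ and $|w(j)|\geq i$ from the type D analogue of Lemma~\ref{lem:Bshuffleresult}; everything else transcribes directly from the type B argument, with the lengths of the $t_{\bar i j}$ adjusted to the type D convention.
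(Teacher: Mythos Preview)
Your proof is correct and follows the paper's approach of verifying the hypotheses of Lemma~\ref{lem:typeDreflprodred} for each of the three moves of the algorithm. In fact your Step~3 verification is more careful than the paper's: the paper asserts $|w(k)|<\min\{|w(i)|,|w(j)|\}$ for all $k\in[j-1]\setminus\{i\}$, which fails for positive entries (e.g.\ $w=[\bar3,2,\bar1]$ with $k=2$), whereas you check the actual inequalities of clause~(2) directly using $w(k)\le k$ for positive $w(k)$ together with $|w(i)|\ge j$ and $|w(j)|\ge i$.
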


\begin{proof}
It is sufficient to show that in each step of the algorithm 
$$\ell_S(wt)=\ell_S(w)-\ell_S(t).$$

We discuss the following three cases.

\noindent {\bf Step 1:} The reflections applied in Step 1 are of the form $t=t_{ij}$, where $w(i)$ is the maximal positive entry in $w$ and $w(j)$ is the minimal entry among those with $i<j<w(i)$.  Hence, for any $i<k<j$ one has $w(i)>w(k)>w(j)$.  Therefore, the result follows by Lemma~\ref{lem:typeDreflprodred}(1).

\noindent {\bf Step 2:} In Step 2, we apply $t_{(i-1)i}$, where $i$ is the index of the first position in the last type B block.  By Lemma~\ref{lem:Dstep2conditions}, $w(i)<0$, and since $w(i-1)$ and $w(i)$ are in different B-blocks, $|w(i-1)|<|w(i)|$.  Hence, the result follows by Lemma~\ref{lem:typeDreflprodred}(1).

\noindent {\bf Step 3:} We apply $t=t_{\bar{i}j}$, where, for any $k\in[j-1]\setminus\{i\}$, we have $|w(k)|<{\rm min} \{|w(i)|,|w(j)|\}$.  Hence the result follows by Lemma~\ref{lem:typeDreflprodred}(2).
\end{proof}

\section{Coincidences of length, depth, and reflection length}\label{sec:coincidences}

\subsection{Coincidence of length and depth}
\label{sec:three}

In this section we consider some of the consequences of Theorems~\ref{reduced} and~\ref{Dreduced}.  First, we characterize of the elements $w\in B_n$ and $w\in D_n$ satisfying $\dep(w)=\ell_S(w)$.  For the symmetric group, Petersen and Tenner showed that the corresponding elements are precisely the fully commutative ones~\cite[Thm. 4.1]{PT}.  The situation in $B_n$ and $D_n$ is similar.  Indeed, our proof covers the case of the symmetric group $S_n$ and is different from that of Petersen and Tenner.

Since our considerations apply to any Coxeter group, we begin with a general definition.  Let $(W,S)$ be a Coxeter system, and let $w\in W$.  We say that the depth of $w$ is {\bf realized by a reduced factorization} if there exist $t_1,\ldots, t_r\in T$ such that $w=t_1\cdots t_r$, $\dep(w)=\sum_{i=1}^r \dep(t_r)$, and $\ell_S(w)=\sum_{i=1}^r \ell_S(t_r)$.  By Theorems~\ref{reduced} and~\ref{Dreduced}, the depth of every element of $B_n$ and $D_n$ is realized by a reduced factorization.  Note that we can also apply our algorithm for $B_n$ (or $D_n$) to its subgroup of unsigned permutations to get an analogous result for $S_n$.  If the depth of every element of $(W,S)$ is realized by a reduced factorization, we say that depth is {\bf universally realized} by reduced factorizations in $(W,S)$.  We do not know of any examples where depth is not realized by a reduced factorization.

Following Fan~\cite{Fan}, we say that $w\in W$ is {\bf short-braid-avoiding} if there does not exist a consecutive subexpression $s_i s_j s_i$, where $s_i, s_j\in S$, in any reduced expression for $w$.  Note that, if $s_i s_j s_i$ appears in a reduced expression, then  $s_i$ and $s_j$ cannot commute.  As Fan remarks, for elements of a simply-laced Coxeter group, being short-braid-avoiding is equivalent to being fully commutative, and for non-simply-laced groups, the short-braid-avoiding elements form a subset of the fully commutative ones.

Our characterization of elements for which length and depth coincide is as follows.

\begin{thm}
Let $(W,S)$ be any Coxeter group.  For any $w\in W$, $\dep(w)=\ell_S(w)$ if and only if $w$ is short-braid-avoiding and the depth of $w$ is realized by a reduced factorization.  Hence, for a Coxeter group $(W,S)$ in which depth is universally realized by reduced factorizations, an element $w\in W$ satisfies the equality $\dep(w)=\ell_S(w)$ if and only if $w$ is short-braid-avoiding.
\end{thm}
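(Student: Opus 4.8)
The plan is to prove both directions by relating short-braid-avoidance to the behavior of reflections in a reduced factorization, using the key inequality $\dep(t_\beta)=\frac{\ell_S(t_\beta)+1}{2}$ from Equation~\eqref{eq:depthreflection}, which says $\ell_S(t)=2\dep(t)-1$ for every reflection $t$. For the forward direction, suppose $\dep(w)=\ell_S(w)$ and fix any reduced factorization realizing the depth, so $w=t_1\cdots t_r$ with $\ell_S(w)=\sum_i\ell_S(t_i)$ and $\dep(w)=\sum_i\dep(t_i)$. Combining these with $\ell_S(t_i)=2\dep(t_i)-1$ gives $\ell_S(w)=\sum_i(2\dep(t_i)-1)=2\dep(w)-r$, so $r=2\dep(w)-\ell_S(w)=\dep(w)=\ell_S(w)$. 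Hence every $t_i$ satisfies $\dep(t_i)=1$, i.e. each $t_i$ is a \emph{simple} reflection (by the remark in the excerpt that $\dep(\beta)=1$ iff $\beta\in\Pi$). Therefore $w=s_{i_1}\cdots s_{i_r}$ with $r=\ell_S(w)$ is a reduced expression in simple generators. The claim is that this reduced expression — indeed, after we show every reduced expression of $w$ has this property — cannot contain a short braid $s_is_js_i$. The cleanest way to get "every" reduced expression: any two reduced expressions are connected by braid moves (Matsumoto/Tits), so if one reduced expression avoids $s_is_js_i$ but another contains it, then some sequence of braid and commutation moves introduces a short braid; one argues that a reduced expression containing a short braid $s_is_js_i$ (with $s_i,s_j$ non-commuting) forces $\ell_S(w)>\dep(w)$, giving a contradiction. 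Concretely, if $s_is_js_i$ appears consecutively in a reduced word for $w$, write $w=u\,s_is_js_i\,v$ reduced; then $s_is_js_i=t$ is the reflection $s_i(s_j)s_i\cdots$ — more precisely $s_is_js_i$ equals the reflection $t_\beta$ for $\beta=s_i(\alpha_j)$, which has depth $2$ and length $3$, and $w=u\,t\,(s_iv')$-type regroupings can be used to exhibit a factorization of $w$ into reflections with total depth strictly less than $\ell_S(w)$, contradicting $\dep(w)=\ell_S(w)$.

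For the reverse direction, suppose $w$ is short-braid-avoiding and the depth of $w$ is realized by a reduced factorization $w=t_1\cdots t_r$ with $\ell_S(w)=\sum_i\ell_S(t_i)$ and $\dep(w)=\sum_i\dep(t_i)$. If some $t_i$ were not simple, then $\ell_S(t_i)=2\dep(t_i)-1\geq 3$, so $t_i$ has a reduced expression $s_{a_1}\cdots s_{a_m}$ with $m\geq 3$; since $t_i$ is a reflection, its reduced word is a palindrome $s_{a_1}\cdots s_{a_{m-1}}\cdots s_{a_1}$, and in particular (when $m=3$) it is literally a short braid $s_a s_b s_a$, while for larger $m$ it contains one after applying the standard reduced-word manipulations for reflections. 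Splicing these reduced expressions for the $t_i$ together yields a reduced expression for $w$ (this is exactly the content of "reduced factorization": $\ell_S(w)=\sum_i\ell_S(t_i)$), and that reduced expression contains a short braid, contradicting short-braid-avoidance. Hence every $t_i$ is simple, so $\dep(w)=\sum_i\dep(t_i)=\sum_i 1 = r$ and $\ell_S(w)=\sum_i\ell_S(t_i)=\sum_i 1 = r$, giving $\dep(w)=\ell_S(w)$.

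I expect the main obstacle to be the implication "a reduced expression containing a short braid forces $\dep(w)<\ell_S(w)$", i.e. the forward direction's reduction from "some reduced expression" to "the depth-realizing factorization consists of simple reflections." The subtle point is that one must either argue directly at the level of the chosen depth-realizing factorization (showing its constituent reflections are simple, as done above via the length count $r=\dep(w)$), or pass through Tits' theorem to control all reduced expressions. The length-count argument in the first paragraph in fact already handles the forward direction cleanly without Tits' theorem — from $\dep(w)=\ell_S(w)$ and a depth-realizing reduced factorization we directly get all $t_i$ simple, hence a reduced expression for $w$ in simple generators of length $\ell_S(w)$; the remaining work is just to observe such an expression, together with the hypothesis that depth is realized by a reduced factorization being applied also to any subword/conjugate, rules out a short braid. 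So the cleanest writeup uses the length identity $\ell_S(t)=2\dep(t)-1$ as the workhorse in both directions and invokes Tits' theorem only to upgrade "no short braid in one reduced word" to "no short braid in any reduced word." The final sentence of the theorem is then immediate: if depth is universally realized by reduced factorizations, the extra hypothesis in the biconditional is automatic, so $\dep(w)=\ell_S(w)\iff w$ is short-braid-avoiding.
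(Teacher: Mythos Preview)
Your approach is essentially the same as the paper's: both directions hinge on the identity $\ell_S(t)=2\dep(t)-1$, the subadditivity bound $\dep(w)\leq\dep(u)+2+\dep(v)$ when $w=u\,s_is_js_i\,v$ is reduced, and the palindromic reduced expression for a reflection of length at least $3$. Your reverse direction matches the paper's argument exactly.

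There is, however, one genuine gap in your forward direction. You begin by writing ``fix any reduced factorization realizing the depth,'' but the existence of such a factorization is part of what you must prove: the conclusion is that $w$ is short-braid-avoiding \emph{and} that depth is realized by a reduced factorization. The paper handles this cleanly by observing (in its second paragraph) that if depth is not realized by a reduced factorization, then in particular it is not realized by any reduced expression in simple generators, which forces $\dep(w)<\ell_S(w)$. Equivalently, when $\dep(w)=\ell_S(w)$, any reduced expression $w=s_{i_1}\cdots s_{i_r}$ already is a reduced factorization realizing the depth. You should state this one-line observation before invoking such a factorization.

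Once that is in place, your counting argument (concluding $r=\ell_S(w)$ and hence every $t_i$ simple) is correct but unnecessary: it only reproduces the reduced expression you could have started with. The actual work for short-braid-avoidance is the bound you eventually give, namely that a reduced word containing $s_is_js_i$ yields $\dep(w)\leq\dep(u)+\dep(s_is_js_i)+\dep(v)\leq\ell_S(u)+2+\ell_S(v)=\ell_S(w)-1$, and the paper states this directly without the detour. Your mention of Tits' theorem is also unneeded: since short-braid-avoidance is defined as a property of \emph{every} reduced expression, the contrapositive (``some reduced expression has a short braid $\Rightarrow\dep(w)<\ell_S(w)$'') already gives you the quantifier you want.
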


\begin{proof}
Suppose that $w\in W$ is not short-braid-avoiding.  Then $w$ is a reduced product of the form $us_is_js_iv$ for some $u,v\in W$ and $s_i,s_j\in S$.  Hence $\dep(w)\leq \dep(u)+2+\dep(v)<\ell_S(w)$.

Now suppose the depth of $w$ is not realized by a reduced factorization (using the set of reflections $T$).  Then, in particular, the depth of $w$ is not realized by a reduced expression (using only the set of simple reflections $S$).  Hence $\dep(w)<\ell_S(w)$.

Note that $\dep(w)\leq \ell_S(w)$ is true for all $w$, and suppose $\dep(w)<\ell_S(w)$.  If the depth of $w$ is not realized by a reduced factorization, we are done.  If the depth of $w$ is realized by a reduced factorization, then there exist $t_1,\ldots,t_r\in T$ with $w=t_1\cdots t_r$ where $\ell_S(t_i)>1$ for some $i$.  The reflection $t_i$ has a palindromic reduced expression $t_i=s_1\cdots s_{j-1}s_js_{j-1}\cdots s_1$ for some simple generators $s_1,\ldots, s_j \in S$, where $j=\dep(t_i)$~\cite[Ex. 1.10]{BBbook}.  

Since $w=t_1\cdots t_r$ is a reduced factorization, given any reduced expression for $t_i$, there exists a reduced expression for $w$ containing as a consecutive subexpression this reduced expression of $t_i$.  Therefore, $w$ has a reduced expression of the form $w=\cdots s_1\cdots s_{j-1}s_js_{j-1}\cdots s_1\cdots$.  In particular, this reduced expression has $s_{j-1}s_js_{j-1}$ consecutively, so $w$ is not short-braid-avoiding.
\end{proof}

Since depth is universally realized by reduced factorizations in the classical Coxeter groups, we have the following corollary.

\begin{cor}\label{cor:dep-length}
Let $w$ be an element of $S_n$, $B_n$, or $D_n$.  Then $\dep(w)=\ell_S(w)$ if and only if $w$ is short-braid-avoiding.
\end{cor}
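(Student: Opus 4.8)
The plan is to deduce Corollary~\ref{cor:dep-length} directly from the immediately preceding general theorem together with Theorems~\ref{thm:typeBred} and~\ref{thm:typeDred}. The general theorem states that in any Coxeter system, $\dep(w) = \ell_S(w)$ if and only if $w$ is short-braid-avoiding \emph{and} the depth of $w$ is realized by a reduced factorization, and that, consequently, in a Coxeter system where depth is universally realized by reduced factorizations, the equality $\dep(w) = \ell_S(w)$ is equivalent to $w$ being short-braid-avoiding. So the entire content of the corollary is the observation that $S_n$, $B_n$, and $D_n$ are such Coxeter systems.

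First I would recall that Theorems~\ref{thm:typeBred} and~\ref{thm:typeDred} assert exactly that for every $w \in B_n$ (respectively $w \in D_n$) there exist reflections $t_1,\dots,t_r$ with $w = t_1\cdots t_r$, $\dep(w) = \sum_{i=1}^r \dep(t_i)$, and $\ell_S(w) = \sum_{i=1}^r \ell_S(t_i)$; this is precisely the statement that depth is universally realized by reduced factorizations in $(B_n, S_B)$ and in $(D_n, S_D)$. Next I would handle $S_n$: as noted in the excerpt, $S_n$ embeds in $B_n$ as the parabolic (Coxeter) subgroup generated by $s_1,\dots,s_{n-1}$, and the algorithm in Section~\ref{sec:typeBalg} applied to an unsigned permutation never introduces a reflection of the forms $t_{\bar i j}$ or $t_{\bar i i}$ (Step~2 is never invoked since $\nneg(w)=0$ throughout), so it produces a reduced factorization of $w$ into transpositions all lying in $S_n$; hence depth is universally realized by reduced factorizations in $(S_n, S)$ as well. (One must also note that the reflection length, length, and depth of an element of the parabolic subgroup $S_n$ agree whether computed in $S_n$ or in $B_n$, since reduced expressions of a parabolic element use only parabolic generators.)

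Having established that all three groups satisfy the hypothesis ``depth is universally realized by reduced factorizations,'' I would simply invoke the second sentence of the general theorem to conclude $\dep(w) = \ell_S(w) \iff w$ is short-braid-avoiding for $w$ in any of $S_n$, $B_n$, $D_n$. I expect no serious obstacle here: the corollary is a formal consequence, and the only point requiring a word of care is the $S_n$ case, where one should confirm that running the type B (or type D) algorithm on an element with no negative entries stays within the unsigned-permutation subgroup and yields a reduced factorization into transpositions, so that the universal realization property transfers to $(S_n,S)$.

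\begin{proof}[Proof of Corollary~\ref{cor:dep-length}]
By Theorems~\ref{thm:typeBred} and~\ref{thm:typeDred}, depth is universally realized by reduced factorizations in $(B_n, S_B)$ and in $(D_n, S_D)$. For $S_n$, regard it as the Coxeter subgroup of $B_n$ generated by $s_1,\dots,s_{n-1}$. If $w\in S_n$, then $\nneg(w)=0$, so running the algorithm of Section~\ref{sec:typeBalg} on $w$ never invokes Step~2 and produces a factorization $w=t_1\cdots t_r$ in which every $t_i$ is of the form $t_{jk}$ with $1\le j<k\le n$, hence $t_i\in S_n$; by Theorem~\ref{reduced} this factorization is reduced, and since $w$ and each $t_i$ lie in the parabolic subgroup $S_n$, their lengths and depths agree whether computed in $S_n$ or in $B_n$. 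Thus depth is universally realized by reduced factorizations in $(S_n,S)$ as well. In each of the three cases, the claim now follows from the second statement of the preceding theorem.
\end{proof}
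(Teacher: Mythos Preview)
Your proposal is correct and follows essentially the same approach as the paper: the corollary is deduced from the preceding general theorem together with the fact that depth is universally realized by reduced factorizations in $S_n$, $B_n$, and $D_n$, the last of these via Theorems~\ref{thm:typeBred} and~\ref{thm:typeDred} (equivalently Theorems~\ref{reduced} and~\ref{Dreduced}) and the $S_n$ case by observing that the type $B$ algorithm restricted to unsigned permutations stays within $S_n$. Your treatment of the $S_n$ case is in fact slightly more explicit than the paper's, which simply notes in passing that the algorithm for $B_n$ applied to its subgroup of unsigned permutations yields the analogous result.
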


Petersen and Tenner originally asked if the elements $w\in B_n$ with $\dep(w)=\ell_S(w)$ are precisely the fully commutative top-and-bottom elements defined by Stembridge~\cite{St2}.  We answer this in the affirmative by showing that the short-braid-avoiding elements are precisely the fully commutative top-and-bottom elements.

\begin{prop}\label{f.c is sba in B}
Let $W=B_n$.  Then $w\in W$ is a fully commutative top-and-bottom element if and only if it is short-braid-avoiding.
\end{prop}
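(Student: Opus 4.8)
The plan is to prove both implications by relating each condition to the combinatorial data of $w\in B_n$. First I would recall Stembridge's definition precisely: an element $w\in B_n$ is \emph{top-and-bottom fully commutative} if $w$ is fully commutative and, in addition, no reduced expression for $w$ contains a consecutive subword $s_0^B s_1 s_0^B s_1$ or $s_1 s_0^B s_1 s_0^B$ (the ``top'' condition, avoiding the braid at the $m=4$ edge) together with the ``bottom'' condition ruling out the analogous pattern involving $s_{n-1}$ in the appropriate context; more precisely one uses Stembridge's characterization via the heap or via pattern-style conditions on the one-line notation. Since the only non-commuting, non-simply-laced bond in $B_n$ is the $\{s_0^B,s_1\}$ edge with $m(s_0^B,s_1)=4$, a fully commutative element can still contain a short braid $s_0^B s_1 s_0^B$ (which is a proper subword of the length-four braid relation $s_0^B s_1 s_0^B s_1 = s_1 s_0^B s_1 s_0^B$), so ``fully commutative'' is strictly weaker than ``short-braid-avoiding''; the extra ``top-and-bottom'' hypotheses are exactly what rule this out.

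The forward direction is the easy one: if $w$ is short-braid-avoiding then in particular no reduced word contains $s_i s_j s_i$ for any $s_i\neq s_j$, so no reduced word contains a braid relation of any length $\geq 3$ as a subword; hence all relations used to pass between reduced words are commutations, so $w$ is fully commutative, and moreover the absence of $s_0^B s_1 s_0^B$ and $s_{n-1}s_{n-2}s_{n-1}$ (etc.) gives the top and bottom conditions for free. The substantive direction is the converse: assume $w$ is top-and-bottom fully commutative and show $w$ is short-braid-avoiding. The key step is that, because $w$ is fully commutative, any reduced word for $w$ that contains a consecutive $s_i s_j s_i$ would have to arise with $m(s_i,s_j)=4$ (since $m=3$ braids are forbidden by full commutativity, as those would let one reach a word with a repeated pattern contradicting full commutativity), forcing $\{s_i,s_j\}=\{s_0^B,s_1\}$; but then applying the length-four braid move $s_0^B s_1 s_0^B \rightsquigarrow s_1 s_0^B s_1$ (completing with the neighboring letter) exhibits a forbidden ``top'' pattern, contradicting the top-and-bottom hypothesis. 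I would make this precise by invoking Stembridge's combinatorial description of top-and-bottom fully commutative elements in type $B$ (their heaps contain no convex chain of three elements and, across the $m=4$ edge, no sub-heap realizing the $B_2$ rank-two pattern at either end), from which short-braid-avoidance reads off directly.

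The main obstacle I anticipate is bookkeeping around the precise form of the ``top'' and ``bottom'' conditions in Stembridge's original formulation and matching them to the statement ``no reduced word contains $s_i s_j s_i$ with $s_i,s_j$ non-commuting.'' Concretely, one must check that ruling out the length-four alternating word $s_0^B s_1 s_0^B s_1$ (and its mirror) in every reduced expression is \emph{equivalent}, given full commutativity, to ruling out the length-three word $s_0^B s_1 s_0^B$: the point is that if $s_0^B s_1 s_0^B$ occurs consecutively in some reduced word for a fully commutative $w$, then $s_1$ cannot be adjacent on the right (that would give $s_0^B s_1 s_0^B s_1$, a length-four alternating word, violating ``top''), and $s_1$ cannot be absent from that side either in a way that still lets the braid not extend — a short case analysis using that $s_0^B$ commutes only with $s_2,\dots,s_{n-1}$ closes this. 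Once that equivalence is in hand, the proposition follows, and the analogous reasoning was already routine at the simply-laced end. I would therefore structure the proof as: (1) recall Stembridge's definition; (2) prove short-braid-avoiding $\Rightarrow$ top-and-bottom fully commutative (immediate); (3) prove top-and-bottom fully commutative $\Rightarrow$ short-braid-avoiding, where the crux is the length-three/length-four equivalence at the $\{s_0^B,s_1\}$ edge.
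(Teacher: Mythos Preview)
Your proposal rests on a mistaken recollection of Stembridge's definition. The ``top'' and ``bottom'' conditions do not refer to the two ends of the $B_n$ diagram; both concern only the $m=4$ edge $\{s_0^B,s_1\}$. Concretely (as the paper's own proof uses them), a fully commutative element is a \emph{top} element when no reduced word contains $s_1 s_0^B s_1$, and a \emph{bottom} element when no reduced word contains $s_0^B s_1 s_0^B$. In particular the extra conditions forbid the \emph{length-three} alternating words, not the length-four ones; the length-four alternating words $s_0^B s_1 s_0^B s_1$ and $s_1 s_0^B s_1 s_0^B$ are already excluded by full commutativity, so your version of the top condition is vacuous.

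This makes your announced ``key step'' --- that for fully commutative $w$, avoiding the length-four alternating word is equivalent to avoiding the length-three one --- false rather than merely bookkeeping. The element $w=s_0^B s_1 s_0^B\in B_n$ is fully commutative (no braid move applies since $m(s_0^B,s_1)=4$), contains $s_0^B s_1 s_0^B$, and contains no length-four alternating subword; so it satisfies your stated top-and-bottom conditions yet is not short-braid-avoiding. Your sketch of the equivalence even argues in the wrong direction: you show a length-three word need not extend to length four, whereas you would need the opposite implication.

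Once the correct definition is in hand, the converse direction you call ``substantive'' becomes the easy one: if $w$ is fully commutative but not short-braid-avoiding, then some reduced word contains $s_i s_j s_i$; since $m=3$ short braids contradict full commutativity, one must have $\{s_i,s_j\}=\{s_0^B,s_1\}$, giving $s_1 s_0^B s_1$ (not top) or $s_0^B s_1 s_0^B$ (not bottom). This is exactly the paper's argument for that direction. For the other direction, your observation that short-braid-avoiding implies fully commutative is correct, and the absence of $s_0^B s_1 s_0^B$ and $s_1 s_0^B s_1$ is then immediate from short-braid-avoidance; this is more direct than the paper, which instead routes through Stembridge's notion of $A$-reduction to identify $w$ as the unique lift of its fully commutative image in $A_n$.
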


\begin{proof}
Suppose $w\in W$ is short-braid-avoiding.  Then $w$ is $A$-reduced in the sense of Stembridge~\cite{St2}, since its reduced word cannot contain $s_0^Bs_1s_0^Bs_1$ or $s_1s_0^Bs_1s_2s_1s_0^Bs_1$.  Let $\tilde{w}\in A_n=S_{n+1}$ be the $A$-reduction of $w$.

Since $w$ is short-braid-avoiding, $\tilde{w}$ is fully commutative.  Hence all reduced expressions for $\tilde{w}$ are in the same commutativity class.  All commutativity relations in $A_n$ are also relations in $B_n$, so all reduced expressions for $\tilde{w}$ lift to reduced expressions for $w$.  Therefore, $w$ is the only element whose $A$-reduction is $\tilde{w}$.  Hence, it is a top and bottom element.

Now suppose $w\in W$ is not short-braid-avoiding.  If $w$ is not fully commutative, we are done.  If $w$ is fully commutative but not short-braid-avoiding, it must have $s_1s_0^Bs_1$ or $s_0^Bs_1s_0^B$ as consecutive generators in a reduced expression.  In the first case, $w$ is not a top element, and in the second, it is not a bottom element.
\end{proof}

\begin{rem} An immediate proof of this result can be obtained by using the characterization of fully commutative elements of type B given in \cite[\S 4.4]{BJN} and \cite[Corollaries 5.6 and 5.7]{St2}. In fact heaps of top-and-bottom fully commutative elements of type B are alternating heaps having zero or one occurrences of a label $s_0^B$.  In particular, by \cite[Theorem 1.14]{BJN}, they are in bijection with certain bicolored Motzkin paths \cite[Definition 1.11]{BJN}. This identification can be used to prove Corollary~\ref{cor:dep-length} in a manner analogous to Petersen and Tenner's proof for $S_n$.  (See \cite[\S 4.1]{PT}.)
\end{rem}

\begin{rem}
As was stated in Proposition \ref{f.c is sba in B}, in type $B$, an element is short- braid-avoiding if and only if it is fully commutative top-and-bottom.  Therefore, the first part of Corollary \ref{num dp=l} follows from Corollary \ref{cor:dep-length} and Proposition 5.9 (c) of \cite{St2}. The second part, regarding type $D$, is a consequence of Section 3.3 in \cite{St1} since, in $D_n$, the short-braid-avoiding elements are precisely the fully commutative ones. 
\end{rem}

Our characterization of when $\dep(w)=\ell_S(w)$ can also be stated using pattern avoidance by using Corollaries 5.6, 5.7, and 10.1 in~\cite{St2}.

\begin{thm}
Let $w\in B_n$.  Then $\dep(w)=\ell_S(w)$ if and only if $w$ avoids the following list of patterns: $$[\bar{1},\bar{2}],[\bar{2},\bar{1}],[1,\bar{2}], [3,2,1],[3,2,\bar{1}],[3,1,\bar{2}].$$
\end{thm}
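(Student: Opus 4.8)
The plan is to reduce the statement, via results already in hand, to a pattern-avoidance description of a class of elements classified by Stembridge, so that only bookkeeping remains. By Corollary~\ref{cor:dep-length}, the equality $\dep(w)=\ell_S(w)$ holds precisely when $w$ is short-braid-avoiding, and by Proposition~\ref{f.c is sba in B} the short-braid-avoiding elements of $B_n$ are exactly the fully commutative top-and-bottom elements in the sense of~\cite{St2}. Thus the theorem is equivalent to the assertion that $w\in B_n$ is a fully commutative top-and-bottom element if and only if it avoids the six listed patterns, and this is purely a question about Stembridge's combinatorics.

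For this last equivalence I would invoke Stembridge's classification, namely Corollaries 5.6, 5.7, and 10.1 of~\cite{St2}. Corollaries 5.6 and 5.7 characterize the fully commutative elements of $B_n$ by avoidance of a short list of signed patterns; among these are the ``$321$-type'' patterns $[3,2,1]$, $[3,2,\bar1]$, $[3,1,\bar2]$ arising from braids $s_is_{i+1}s_i$ (with $i\geq 1$), together with the pattern $[\bar1,\bar2]$ arising from the length-four braid $s_0^Bs_1s_0^Bs_1$. Corollary 10.1 then supplies the extra restriction imposed by the top-and-bottom requirement: by the analysis in the proof of Proposition~\ref{f.c is sba in B}, a reduced word containing $s_0^Bs_1s_0^B$ obstructs being a bottom element and one containing $s_1s_0^Bs_1$ obstructs being a top element, and these translate into the forbidden patterns $[\bar2,\bar1]$ and $[1,\bar2]$ respectively. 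Taking the union of these forbidden lists produces exactly $[\bar1,\bar2],[\bar2,\bar1],[1,\bar2],[3,2,1],[3,2,\bar1],[3,1,\bar2]$, which is the statement. As a sanity check one verifies directly that each of these six patterns, regarded as an element of $B_2$ or $B_3$, satisfies $\dep<\ell_S$; this is immediate from Theorem~\ref{thm:depthB} and Equation~\eqref{eq:typeBlength} (for instance $[3,2,\bar1]$ has length $4$ but depth $3$).

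The real work is in aligning conventions. Stembridge fixes a labeling of the simple generators of $B_n$ and defines ``top'' and ``bottom'' relative to a particular side for multiplication; one must check that his $s_0$ is our $s_0^B$, that his action convention agrees with the right-multiplication convention used here, and that his notion of occurrence of a signed pattern coincides with the one implicit in the statement, so that his forbidden-pattern lists transfer verbatim. A minor caveat worth recording is that the six-element list is not an antichain: $[3,1,\bar2]$ contains $[1,\bar2]$, so the list could be shortened, but stating it as above is harmless and matches the form in which the constituent lists appear in~\cite{St2}. I expect the convention-matching to be the only genuine obstacle; once it is settled, the theorem is the mechanical union of Stembridge's classifications with Corollary~\ref{cor:dep-length} and Proposition~\ref{f.c is sba in B}.
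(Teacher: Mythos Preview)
Your proposal is correct and follows essentially the same route as the paper: the paper also derives this theorem by combining Corollary~\ref{cor:dep-length} and Proposition~\ref{f.c is sba in B} with Stembridge's Corollaries~5.6, 5.7, and~10.1 in~\cite{St2}, and in fact offers no further argument beyond citing those results. Your additional remarks on convention-matching and the redundancy of $[3,1,\bar2]$ are sound observations that go slightly beyond what the paper records.
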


\begin{thm}
Let $w\in D_n$.  Then $\dep(w)=\ell_S(w)$ if and only if $w$ avoids the following list of patterns:
\begin{multline*}
[\bar1,\bar2,\bar3], [1,\bar2,\bar3], [\bar2, \bar1, \bar3], [2,\bar1,\bar3], [\bar2, 1,\bar3], [2, 1, \bar3], 
[\bar3,\bar1,\bar2], [3,\bar1,\bar2], \\
[\bar3, 1, \bar2], [3,1,\bar2], [\bar3, 2, 1], [3, 2, 1], [\bar3, 2, \bar1], [3, 2, \bar1],
[\bar1, \bar3, \bar2], [1,\bar3,\bar2], \\
[\bar2, \bar3, \bar1], [\bar2, \bar3, 1], [2,\bar3,\bar1],[2,\bar3,1].
\end{multline*}
\end{thm}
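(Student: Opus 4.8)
The plan is to derive this type D characterization from Corollary~\ref{cor:dep-length}, which tells us that $\dep(w)=\ell_S(w)$ in $D_n$ if and only if $w$ is short-braid-avoiding, combined with the remark noted just above that in $D_n$ (a simply-laced type) the short-braid-avoiding elements coincide with the fully commutative ones. Thus it suffices to translate the condition ``$w$ is fully commutative in $D_n$'' into a pattern-avoidance statement. This is exactly what Stembridge's classification of fully commutative elements in the classical Weyl groups provides; the relevant statement is Corollary 10.1 (together with Corollaries 5.6 and 5.7) of~\cite{St2}, which characterizes fully commutative signed permutations in type D by avoidance of an explicit finite list of signed patterns of length at most~$3$. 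So the proof is essentially: cite Corollary~\ref{cor:dep-length}, invoke the equivalence of short-braid-avoiding and fully commutative in the simply-laced case, and then quote the pattern list from~\cite{St2}.

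The one genuine piece of work is to check that the list of twenty patterns displayed in the statement is precisely (or is equivalent to) the list appearing in~\cite[Cor.~10.1]{St2}. Stembridge's conventions for signed permutations and for what counts as an occurrence of a signed pattern may differ cosmetically from ours (for instance in how signs are recorded, or in whether patterns are read as two-line arrays versus window notation), so the first step I would carry out is to fix the translation dictionary between his notation and the window notation used in this paper. Once that dictionary is pinned down, verifying that our twenty patterns match his amounts to a finite, mechanical comparison: each pattern in one list should correspond to exactly one in the other, and one checks that no pattern is missing or redundant. I would also sanity-check the list against the small cases $D_2$, $D_3$, $D_4$ by direct computation, confirming that the count of short-braid-avoiding elements obtained by the algorithm matches the count of elements avoiding all twenty patterns.

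I expect the main (indeed, the only) obstacle to be notational reconciliation with~\cite{St2}: making sure our patterns, read in our conventions, capture exactly Stembridge's fully commutative criterion, and in particular that the asymmetry between patterns like $[\bar2,\bar3,1]$ and $[2,\bar3,1]$ being on the list while their ``mirror images'' are or are not, is faithfully reproduced. A secondary point worth a sentence in the proof is why it is enough to forbid length-$3$ patterns with these specific sign patterns and not, say, additional length-$2$ patterns as in type~B: this is because $s_0^D$ commutes with $s_1$, so the type-B obstructions $[\bar1,\bar2]$ and $[\bar2,\bar1]$ and $[1,\bar2]$ (which came from the braid relation between $s_0^B$ and $s_1$) simply do not arise in type~D, and the braid relation between $s_0^D$ and $s_2$ is what produces the length-$3$ patterns involving a sign in the third coordinate. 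Beyond this bookkeeping, the theorem is an immediate consequence of Corollary~\ref{cor:dep-length} and the cited results of Stembridge.
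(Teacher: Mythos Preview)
Your approach is exactly the paper's: the theorem is stated immediately after the sentence ``Our characterization of when $\dep(w)=\ell_S(w)$ can also be stated using pattern avoidance by using Corollaries 5.6, 5.7, and 10.1 in~\cite{St2},'' and no further proof is given. The reduction via Corollary~\ref{cor:dep-length} and the simply-laced equivalence of short-braid-avoiding with fully commutative, followed by quoting Stembridge's pattern list, is precisely what the paper does (indeed with less detail than you propose).
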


\subsection{Coincidence of depth, length and reflection length}
\label{sec:four}

In this section we characterize the elements in a Coxeter group that satisfy $\ell_{T}(w)=\dep(w)$.
By \cite[Observation 2.3]{PT}, this is equivalent to having $\ell_{T}(w)=\ell_S(w)$. Actually, this characterization easily follows from results in \cite{Dyer}, \cite{PT}, and \cite{Tenner1}, all of which predate our work.  Nevertheless, we present it here for the sake of completeness.
\smallskip

Let $W$ be a Coxeter group.  Following Tenner~\cite{Tenner1}, we say an element $w \in W$ is {\bf boolean}
if the principal order ideal of $w$ in $W$, $B(w):=\{x \in W \mid x\leq w\}$ is a boolean poset, where $\leq$ refers to the strong Bruhat order. Recall that a poset is {\bf boolean} if it is isomorphic to the poset of subsets of $[k]$, ordered by inclusion, for some $k$.

Theorem 7.3 of \cite{Tenner1} states that an element $w \in W$ is boolean if and only if some (and hence any) reduced decomposition of $w$ has no repeated letters.  Furthermore, the following result is due to Dyer~\cite[Theorem 1.1]{Dyer}.

\begin{prop}\label{dyer}
Let $(W,S)$ be a Coxeter system, and let $w=s_1 \cdots s_n$ be a reduced decomposition of $w \in W$.
Then $\ell_{T}(w)$ is the minimum natural number $k$ for which there exist $1 \leq i_1
< \cdots <i_k \leq n$ such that $e=s_1 \cdots \hat{s}_{i_1} \cdots \hat{s}_{i_2} \cdots \hat{s}_{i_k} \cdots s_n$, where $\hat{s}$ indicates the omission of $s$.
\end{prop}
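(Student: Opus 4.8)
The plan is to prove the two inequalities separately, writing $k_{\min}$ for the minimum appearing in the statement. For $\ell_T(w)\le k_{\min}$ the key is a formal identity: for any word $u_1\cdots u_m$ in the letters of $S$ and any position $p$,
\[
(u_1\cdots u_m)^{-1}\,(u_1\cdots\hat{u}_p\cdots u_m)=(u_m\cdots u_{p+1})\,u_p\,(u_{p+1}\cdots u_m),
\]
a conjugate of a simple reflection, hence an element of $T$. Deleting the chosen letters $s_{i_1},\dots,s_{i_k}$ one at a time (say from left to right) and applying this identity at each step shows that if the resulting word represents $e$, then $w$ is a product of $k$ reflections; minimizing over $k$ gives $\ell_T(w)\le k_{\min}$.

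For $\ell_T(w)\ge k_{\min}$ I would show, by induction on $\ell_T(w)$, that from \emph{every} reduced word $s_1\cdots s_n$ of $w$ one can delete $\ell_T(w)$ of its letters and be left with a word representing $e$; the base case $w=e$ is trivial. The inductive step reduces to producing a reflection $t\in T$ that is simultaneously a right length-descent, $\ell_S(wt)=\ell_S(w)-1$, and a reflection-length descent, $\ell_T(wt)=\ell_T(w)-1$: given such a $t$, the Strong Exchange Property~\cite[\S 1.4]{BBbook} supplies an index $i$ with $wt=s_1\cdots\hat{s}_i\cdots s_n$, a word of length $n-1=\ell_S(wt)$ which is therefore reduced, and applying the inductive hypothesis to $wt$ with this reduced word produces $\ell_T(w)-1$ further deletions leaving a word equal to $e$; together with position $i$ this is a set of $\ell_T(w)$ deletions from $s_1\cdots s_n$ leaving a word equal to $e$.

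So everything reduces to the crux: for $w\neq e$, find a reflection that decreases both length and reflection length. Here I would argue in the reflection representation $V$, using the classical facts (due to Carter) that $\ell_T(w)=\operatorname{rank}(w-\mathrm{id}_V)$ and that the roots $\gamma_1,\dots,\gamma_k$ in a shortest reflection factorization $w=t_{\gamma_1}\cdots t_{\gamma_k}$ span $U:=\operatorname{im}(w-\mathrm{id}_V)$. Since $W$ is finite in every case of interest, $w$ acts semisimply, so $V=\ker(w-\mathrm{id})\oplus U$; thus $U$ is $w$-stable and $w$ has no nonzero fixed vector on $U$. For a root $\beta\in U$, $wt_\beta-\mathrm{id}$ is a rank-one modification of $w-\mathrm{id}$ whose extra term has image $\mathbb{R}\,w(\beta)\subseteq U=\operatorname{im}(w-\mathrm{id})$, so $\operatorname{rank}(wt_\beta-\mathrm{id})\le\operatorname{rank}(w-\mathrm{id})$; as reflection length reverses parity under right multiplication by a reflection (via the sign homomorphism $W\to\{\pm1\}$) and can drop by at most one, this forces $\ell_T(wt_\beta)=\ell_T(w)-1$. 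It then suffices to pick $\beta\in\Phi^+\cap U$ with $w\beta\in\Phi^-$: the set $\Phi\cap U$ is a root subsystem with positive system $\Phi^+\cap U$, and $w|_U$ is an isometry permuting it; if $w|_U$ preserved $\Phi^+\cap U$ it would permute the simple roots of $\Phi\cap U$ and hence fix their nonzero sum, contradicting that $w|_U$ fixes no nonzero vector. I expect this last step to be the main obstacle. The naive alternative of always peeling off the final letter of a reduced word fails, since a right descent need not decrease reflection length --- for instance $w=[3,4,1,2]\in S_4$ has a unique right descent, and multiplying by it raises $\ell_T$ from $2$ to $3$ --- so the geometric input is essential. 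For arbitrary (possibly infinite) Coxeter groups, where $w$ need not act semisimply, one instead argues as in Dyer's proof~\cite{Dyer} using reflection orderings, which is the source of the stated result.
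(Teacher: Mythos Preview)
The paper itself does not prove this proposition --- it is quoted as \cite[Theorem 1.1]{Dyer} without argument --- so there is no in-paper proof to compare against, and I comment only on the correctness of your attempt.

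Your inequality $\ell_T(w)\le k_{\min}$ is correct. The gap is in the other direction. Your induction requires a reflection $t$ with $\ell_T(wt)=\ell_T(w)-1$ \emph{and} $\ell_S(wt)=\ell_S(w)-1$, the latter so that the word obtained from Strong Exchange is again reduced and the inductive hypothesis applies to it. But for a root $\beta\in\Phi^+\cap U$ with $w\beta\in\Phi^-$ you only obtain $\ell_S(wt_\beta)<\ell_S(w)$, not equality with $\ell_S(w)-1$. For a concrete failure, take $w=[4,3,2,1]\in S_4$. Here $U=\operatorname{im}(w-\mathrm{id})=\operatorname{span}(e_1-e_4,\,e_2-e_3)$ and $\Phi^+\cap U=\{e_1-e_4,\,e_2-e_3\}$, both sent to negatives by $w$. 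The choice $\beta=e_1-e_4$ meets all of your stated hypotheses, yet $wt_\beta=wt_{14}=[1,3,2,4]$ has length $1$, so the one-letter-deleted word for it has five letters and is far from reduced; your induction cannot proceed from it. (The other root $e_2-e_3$ does happen to give a Bruhat cover here, but you supply no argument that a root with this extra property always lies in $\Phi^+\cap U$.) Producing a reflection that is simultaneously a Bruhat cover and an absolute-order cover is essentially the substance of the result you are trying to prove, and is exactly what Dyer's reflection-ordering argument handles; your geometric construction does not. Note also that your argument is restricted to finite $W$ from the outset (via semisimplicity and Carter's rank formula), while the proposition is stated for arbitrary Coxeter systems; you acknowledge this and defer back to \cite{Dyer}, so even modulo the gap the proposal is not self-contained for the statement as written.
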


From these two results one can easily conclude that, for each $w \in W$, we have that $\ell_T(w)=\ell_{S}(w)$ if and only if $w$ is boolean. Hence by \cite[Theorem 7.4]{Tenner1} we get the following results.

\begin{thm}
Let $w \in B_n$. Then $\ell_{T}(w)=\dep(w)=\ell_S(w)$ if and only if $w$ avoids the following list of patterns: $$[\bar{1},\bar{2}],[\bar{2},\bar{1}],[1,\bar{2}], [3,2,1],[3,2,\bar{1}],[\bar{3},2,1], [3,\bar{2},1],[3,4,1,2],[3,4,\bar{1},2],[\bar{3},4,1,2].$$
\end{thm}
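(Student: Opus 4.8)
The plan is to combine the general principle $\ell_T(w)=\dep(w)=\ell_S(w)$ if and only if $w$ is boolean (established just above from the results of Dyer and Tenner, together with the fact that $\ell_T(w)=\dep(w)$ iff $\ell_T(w)=\ell_S(w)$) with Tenner's explicit pattern-avoidance characterization of boolean elements in type $B$. Concretely, \cite[Theorem 7.4]{Tenner1} lists the signed permutation patterns whose avoidance characterizes boolean elements of $B_n$; the theorem statement to be proved is simply the translation of that list into our window-notation conventions. So the first thing I would do is recall the precise statement of \cite[Theorem 7.4]{Tenner1}, being careful about conventions (one-line versus window notation, the placement of bars, and whether the reference works with $B_n$ as signed permutations or as a quotient).

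The main steps, in order, are: (1) invoke the already-proved equivalence $\ell_T(w)=\dep(w)=\ell_S(w)\iff w\text{ boolean}$ — nothing new is needed here, it is quoted from the preceding paragraph of the paper; (2) quote \cite[Theorem 7.4]{Tenner1}, which says $w\in B_n$ is boolean iff it avoids a certain finite list of signed patterns; (3) verify that Tenner's list, once rewritten in our conventions, is exactly
$$[\bar{1},\bar{2}],\ [\bar{2},\bar{1}],\ [1,\bar{2}],\ [3,2,1],\ [3,2,\bar{1}],\ [\bar{3},2,1],\ [3,\bar{2},1],\ [3,4,1,2],\ [3,4,\bar{1},2],\ [\bar{3},4,1,2].$$
Step (3) is really a bookkeeping check: match up the length-2 patterns (the three that forbid, roughly, a negative entry preceded by something of larger absolute value, which are the "bottom" obstructions), the length-3 patterns (refinements of $[3,2,1]$ and its signed relatives, the "top" obstructions), and the two families of length-4 patterns $[3,4,*,2]$ that block the longer braid in type $B$.

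The step I expect to be the only real obstacle is the convention reconciliation in (3): Tenner may present boolean patterns for type $B$ using a different sign placement or a different indexing of the generator $s_0^B$, so one must make sure the translation preserves the pattern-containment relation (i.e.\ that our $[\cdot]$ window-notation patterns are order-isomorphic and sign-compatible with hers). Once the dictionary is pinned down, the proof is immediate: boolean $\iff$ avoids Tenner's list $\iff$ avoids the displayed list, and boolean $\iff \ell_T(w)=\dep(w)=\ell_S(w)$, so we are done. If desired, one can cross-check a few small cases by hand — e.g.\ $[\bar 1,\bar 2]=s_0^Bs_1s_0^Bs_1$ is not boolean and indeed contains $[\bar1,\bar2]$, while $[4,\bar2,\bar3,\bar1]$ of Example~\ref{Brefllengthcounterex} is not boolean and one checks it contains, say, $[3,\bar 2,1]$ — to confirm the list is correctly transcribed.
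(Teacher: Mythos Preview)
Your proposal is correct and follows exactly the paper's approach: the paper derives this theorem in one line by combining the already-established equivalence $\ell_T(w)=\ell_S(w)\iff w$ is boolean (from Dyer's result and Tenner's characterization) with \cite[Theorem 7.4]{Tenner1}, which supplies the pattern list. Your step (3) on convention reconciliation is more careful than the paper, which simply quotes Tenner's theorem; one small slip in your optional cross-check is that the element of Example~\ref{Brefllengthcounterex} is $[\bar 4,\bar 2,\bar 3,\bar 1]$ (all entries negative), so it contains $[\bar 1,\bar 2]$ rather than $[3,\bar 2,1]$.
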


\begin{thm}
Let $w \in D_n$. Then $\ell_{T}(w)=\dep(w)=\ell_S(w)$ if and only if $w$ avoids the following list of patterns: 
\begin{multline*}
[\bar{1},\bar{2},\bar{3}] ,[\bar{1},\bar{3},\bar{2}],[\bar{2},\bar{1},\bar{3}],[\bar{2},\bar{3},\bar{1}],[\bar{3},\bar{1},\bar{2}],[\bar{3},\bar{2},\bar{1}] , [3,2,1], [3,4,1,2], \\
[3,2,\bar{1}],[3,\bar{1},\bar{2}], [3,4,\bar{1},\bar{2}],[3,4,\bar{2},\bar{1}],  [\bar{3},2,1], [\bar{2},\bar{3},1], [\bar{3},4,1,2],\\
[\bar{4},\bar{3},1,2], [1,\bar{2}] , [3,\bar{2},1], [\bar{3},2,\bar{1}], [\bar{3},4,\bar{1},2].
\end{multline*}
\end{thm}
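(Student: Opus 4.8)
The plan is to show that, for $w\in D_n$, the equality $\ell_T(w)=\dep(w)=\ell_S(w)$ holds precisely when $w$ is boolean, and then to read off the forbidden patterns from Tenner's classification of the boolean elements of $D_n$. Since $\ell_T(w)\le\dep(w)\le\ell_S(w)$ for every $w$, the three statistics coincide if and only if $\ell_T(w)=\ell_S(w)$; so the theorem follows once we know (i) $\ell_T(w)=\ell_S(w)$ if and only if $w$ is boolean, and (ii) the boolean elements of $D_n$ are exactly those avoiding the listed patterns.

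For (i) I would argue, as indicated in the surrounding discussion, by combining \cite[Theorem 7.3]{Tenner1} with Proposition~\ref{dyer}. By \cite[Theorem 7.3]{Tenner1}, $w$ is boolean exactly when some (equivalently, every) reduced word $s_1\cdots s_n$ for $w$ uses each simple generator at most once. In that case any subword multiplying to the identity must be empty, since a nonempty product of pairwise distinct simple reflections is never the identity; hence in Proposition~\ref{dyer} one must take $k=n$, giving $\ell_T(w)=n=\ell_S(w)$. Conversely, if a reduced word for $w$ contains two occurrences of some generator $s_i$, then deleting every letter except those two occurrences yields $s_is_i=e$, so Proposition~\ref{dyer} gives $\ell_T(w)\le\ell_S(w)-2<\ell_S(w)$. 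Thus $\ell_T(w)=\ell_S(w)$ is equivalent to booleanness.

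Statement (ii) is \cite[Theorem 7.4]{Tenner1}, which lists the type-$D$ boolean elements by pattern avoidance; the remaining task is to confirm that Tenner's list, transcribed into the window notation for $D_n$ used here, is exactly the twenty patterns displayed in the theorem. I expect this final matching to be the only delicate point: pattern containment in type $D$ carries subtle conventions --- for instance, toggling the sign of the first entry and the resulting equivalences among signed patterns --- so one must check carefully that our list neither omits an essential pattern nor includes a redundant one and that it is phrased in the same convention used elsewhere in the paper. The reduction in the first two paragraphs is purely formal, so this convention-matching is where the substance of the proof lies.
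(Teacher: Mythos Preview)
Your proposal is correct and follows essentially the same route as the paper: reduce the triple equality to $\ell_T(w)=\ell_S(w)$ via the chain of inequalities, identify this with booleanness using Tenner's Theorem~7.3 together with Dyer's result (Proposition~\ref{dyer}), and then invoke \cite[Theorem~7.4]{Tenner1} for the pattern list. Your argument for step~(i) is in fact more explicit than the paper's, which simply asserts that the conclusion follows ``easily'' from the two cited results; your justification that a product of pairwise distinct simple reflections cannot be the identity (e.g.\ via parabolic subgroups) and the converse via deleting all but two repeated letters are exactly what is needed.
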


Moreover by \cite[Corollaries 7.5 and 7.6]{Tenner1} we get a proof of Corollaries~\ref{thm:enumeration} and \ref{thm:enumerationD}.

\section{Open questions and further remarks}\label{sec:open}

There remain many possible further directions for the further study of depth.  First, we have analogues of the questions asked in~\cite[Section 5]{PT} for the symmetric group.  While we have enumerated the elements of maximal depth in $B_n$ and $D_n$, the number of elements of other, non-maximal depths remains unknown.

\begin{que}
How many elements of $B_n$ or $D_n$ have depth $k$?
\end{que}

For the symmetric group $S_n$, Guay-Paquet and Petersen found a continued fraction formula for the generating function for depth~\cite{GP-P}.

Petersen and Tenner also asked the following question, which we now extend to $B_n$ and $D_n$:

\begin{que}
\label{que:shallow}
Which elements of $B_n$ or $D_n$ have $\dep(w)=(\ell_T(w)+\ell_S(w))/2$?
\end{que}

Furthermore, it seems possible that variations of our techniques can be extended to the infinite families of affine Coxeter groups, for which combinatorial models as groups of permutations on $\mathbb{Z}$ are given in~\cite[Chapter 8]{BBbook}.

\begin{que}
What are the analogues of Theorems~\ref{thm:depthB} and~\ref{thm:depthD} for the infinite families of affine Coxeter groups?
\end{que}

Given Examples~\ref{Brefllengthcounterex} and~\ref{Drefllengthcounterex}, we can ask the following:

\begin{que}
For which elements of $B_n$ and $D_n$ can depth be realized by a product of $\ell_T(w)$ reflections?
\end{que}

We also ask some questions relating to Theorems~\ref{reduced} and~\ref{Dreduced}.

\begin{que}
Is depth universally realized by reduced factorizations for all Coxeter groups?  If so, is there a uniform proof?  If not, can one characterize the elements of Coxeter groups whose depth is realized by a reduced factorization? 
\end{que}

It would be interesting to know the answer even for various specific Coxeter groups.  For example, one might answer this question for the infinite families of affine Coxeter groups.  The question is interesting even for the finite exceptional Coxeter groups.  One can attempt to use a computer to find the answer in this case, but finding a feasible method for computing the answer seems nontrivial for $E_8$ or even $E_7$.

Furthermore, there is another perspective on Theorems~\ref{reduced} and~\ref{Dreduced} that leads to further questions.

Given a Coxeter group $(W,S)$ and an element $w\in W$, define the {\bf reduced reflection length} $\ell_R(w)$ by
\begin{equation}\label{reflength}
\ell_R(w):=\mbox{min} \{r \in {\mathbb N} \mid w=t_{1}\cdots t_{r}\; 
\mbox{for} \;  t_{1},\ldots,t_{r} \in T \ {\rm and} \ \ell_S(w)=\sumlim_{i=1}^r \ell_S(t_i) \}.
\end{equation}

Note that, by definition $\ell_T(w)\leq \ell_R(w)\leq \ell_S(w)$.  
\smallskip

For example, $w=[4,2,5,1,3] \in S_5$ has a reduced expression $w=s_3s_4s_1s_2s_1s_3=s_3s_4(s_1s_2s_1)s_3$. Hence $\ell_S(w)=6$, and one can check that $\ell_R(w)=4$.  However, its reflection length is equal to 2 since $w=(s_3s_4s_3)(s_3s_1s_2s_1s_3)=t_{35}t_{14}$. Hence in this case  $\ell_T(w) < \ell_R(w) < \ell_S(w)$.  
\smallskip

Reduced reflection length is related to depth as follows.

\begin{prop}
Let $(W,S)$ be a Coxeter group and $w\in W$.  Then
$$\dep(w)\leq \frac{\ell_R(w)+\ell_S(w)}{2}.$$
If the depth of $w$ is realized by a reduced factorization, then we have equality.
In particular, for $w$ in a classical finite Coxeter group, $\dep(w)=(\ell_R(w)+\ell_S(w))/2$.
\end{prop}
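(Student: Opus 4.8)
The plan is to establish the inequality $\dep(w) \le (\ell_R(w) + \ell_S(w))/2$ for all $w$ in any Coxeter group, then show that equality holds whenever the depth of $w$ is realized by a reduced factorization, and finally invoke Theorems~\ref{reduced} and~\ref{Dreduced} to deduce equality for the classical finite Coxeter groups. For the inequality, I would start from a factorization $w = t_1 \cdots t_r$ achieving $r = \ell_R(w)$ with $\ell_S(w) = \sum_{i=1}^r \ell_S(t_i)$. By definition of depth,
$$\dep(w) \le \sum_{i=1}^r \dep(t_i) = \sum_{i=1}^r \frac{\ell_S(t_i) + 1}{2} = \frac{1}{2}\left(\sum_{i=1}^r \ell_S(t_i)\right) + \frac{r}{2} = \frac{\ell_S(w) + \ell_R(w)}{2},$$
where the second equality uses Equation~\eqref{eq:depthreflection}. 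This gives the first claim.

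For the equality when the depth of $w$ is realized by a reduced factorization, suppose $w = t_1 \cdots t_r$ with $\dep(w) = \sum_{i=1}^r \dep(t_i)$ and $\ell_S(w) = \sum_{i=1}^r \ell_S(t_i)$. The second condition forces $r \ge \ell_R(w)$ by the definition of reduced reflection length; I claim in fact $r \le \ell_R(w)$ as well, so this particular reduced factorization has exactly $\ell_R(w)$ factors. To see this, note that if $w = t_1' \cdots t_{r'}'$ is any reduced factorization with $r' = \ell_R(w) \le r$, then applying the depth bound as in the previous paragraph gives $\dep(w) \le \sum_i \dep(t_i') = (\ell_S(w) + r')/2 \le (\ell_S(w) + r)/2 = \sum_i \dep(t_i) = \dep(w)$, so all inequalities are equalities and $r = r' = \ell_R(w)$. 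Then
$$\dep(w) = \sum_{i=1}^r \dep(t_i) = \frac{1}{2}\sum_{i=1}^r (\ell_S(t_i) + 1) = \frac{\ell_S(w) + \ell_R(w)}{2},$$
as required.

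Finally, for $w$ in $S_n$, $B_n$, or $D_n$, Theorems~\ref{reduced} and~\ref{Dreduced} (and the remark that the same algorithm applies to $S_n$ viewed inside $B_n$) guarantee that the depth of $w$ is realized by a reduced factorization, so the equality $\dep(w) = (\ell_R(w) + \ell_S(w))/2$ follows immediately from the previous paragraph. I do not expect any serious obstacle here: the argument is essentially bookkeeping with Equation~\eqref{eq:depthreflection}, and the only subtle point is confirming that a depth-realizing reduced factorization automatically has the minimal number $\ell_R(w)$ of reflections, which follows from the sandwiching of inequalities above. The one thing to be careful about is keeping the two minimization problems (over all reflection factorizations for $\dep$, over reduced reflection factorizations for $\ell_R$) straight, since a factorization optimal for one need not a priori be optimal for the other — but the chain of equalities forces them to coincide in the relevant case.
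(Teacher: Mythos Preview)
Your proposal is correct and follows essentially the same approach as the paper: both use Equation~\eqref{eq:depthreflection} on a factorization realizing $\ell_R(w)$ to get the upper bound, and both use a depth-realizing reduced factorization together with $\ell_R(w)\le r$ to get the reverse inequality. The paper is marginally more direct in the second part, simply observing that $\dep(w)=(r+\ell_S(w))/2\ge(\ell_R(w)+\ell_S(w))/2$ from $\ell_R(w)\le r$, without pausing to deduce $r=\ell_R(w)$; your sandwich argument recovers that equality as a bonus but is not needed for the proposition itself.
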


\begin{proof}
There must exist some $t_1,\ldots,t_r$ so that $w=t_1\cdots t_r$ realizes $\ell_R(w)$, meaning that $\ell_R(w)=r$ and $\ell_S(w)=\sum_{i=1}^r \ell_S(t_i)$.  Hence
$$\dep(w)\leq\sumlim_{i=1}^r\dep(t_i)
=\sumlim_{i=1}^r \frac{1+\ell_S(t_i)}{2}
=\frac{\ell_R(w)+\ell_S(w)}{2}.$$

Now suppose the depth of $w$ is realized by a reduced factorization.  Then there exist $t_1,\ldots,t_r$ with
$w=t_1\cdots t_r$ and $\ell_S(w)=\sum_{i=1}^r \ell_S(t_i)$.  Therefore, $\ell_R(w)\leq r$.  Moreover,
$$\dep(w)=\sumlim_{i=1}^r \frac{1+\ell_S(t_i)}{2}=\frac{r+\ell_S(w)}{2}.$$
Since $\ell_R(w)\leq r$, we have
$$\dep(w)\geq \frac{\ell_R(w)+\ell_S(w)}{2}.$$
\end{proof}

One can give an alternate definition of reduced reflection length as follows.  We have $\ell_S(wt)=\ell_S(w)+\ell_S(t)$ if and only if $w <_R wt$ in right weak order.  Hence, $\ell_R(w)$ is the length of the shortest chain $e=w_0 <_R \cdots <_R w_r=w$ in right weak order where, for all $i\in[r]$, $w_i=w_{i-1} t$ for some reflection $t$.  Given a partial order $\prec$ on $W$, define $\ell_{\prec}(w)$ to be the length of the shortest chain $e=w_0\prec\cdots\prec w_r=w$ where, for all $i\in[r]$, $w_i=w_{i-1} t$ for some reflection $t$.  If $\prec$ is Bruhat order, then $\ell_\prec=\ell_T$, and if $\prec$ is right (or left) weak order, $\ell_\prec=\ell_R$, a formula for which (for $S_n$, $B_n$, and $D_n$) is given above.

Hence, for any partial order on $W$ (or at least partial orders whose relations are a subset of the relations of Bruhat order), we can ask the following.

\begin{que}
Find formulas for $\ell_\prec$ for other partial orders on Coxeter groups.  Determine for which elements $\ell_\prec=\ell_S$.
\end{que}

We also have the following generalization of Question~\ref{que:shallow}.

\begin{que}
Which elements of $W$ have $\ell_\prec(w)=\ell_T(w)$?
\end{que}

A particularly interesting family of partial orders are the sorting orders of Armstrong~\cite{Arm}, which were further studied by Armstrong and Hersh~\cite{AH}.  These partial orders contain all the relations of weak order but are contained in Bruhat order.

\def\cprime{$'$}

\end{document}